\DeclareMathOperator{\rk}{r}
\DeclareMathOperator{\cl}{cl}
\newtheorem{theorem}{Theorem}
\newtheorem{prop}{Proposition}
\newtheorem{lemma}{Lemma}
\newtheorem{corollary}{Corollary}
\newtheorem{definition}{Definition}
\newtheorem{conjecture}{Conjecture}
\begin{document}
\title{Sticky matroids and Kantor's Conjecture} \author{Winfried
  Hochst\"attler\and Michael Wilhelmi}
\address{FernUniversit\"at in Hagen}
\email{\{Winfried.Hochstaettler, Michael.Wilhelmi\}@FernUniversitaet-Hagen.de}

\begin{abstract}
  We prove the equivalence of Kantor's Conjecture and the Sticky
  Matroid Conjecture due to Poljak und Turz\'ik.
\end{abstract}

\dedicatory{
  Dedicated to Achim Bachem on the occasion of his 70th birthday.}

\maketitle

\section{Introduction}
The purpose of this paper is to prove the equivalence of two classical
conjectures from combinatorial geometry. Kantor's Conjecture
\cite{Kantor} adresses the problem whether a combinatorial geometry
can be embedded into a modular geometry, i.e.,\ a direct product of
projective spaces. He conjectured that for finite geometries this is
always possible if all pairs of hyperplanes are modular.

The other conjecture, the Sticky Matroid Conjecture (SMC) due to
Poljak and Turz\'ik \cite{PoljakTurzik} concerns the question whether
it is possible to glue two matroids together along a common part. They
conjecture that a ``common part'' for which this is always possible, a
sticky matroid, must be modular.  It is well-known (see eg.\
\cite{Oxley}) that modular matroids are sticky and easy to see
\cite{PoljakTurzik} that modularity is necessary for ranks up to
three. Bachem and Kern~\cite{BachemKernSticky} proved that a rank-4
matroid that has two hyperplanes intersecting in a point is not
sticky. They also stated that a matroid is not sticky if for each of
its non-modular pairs there exists an extension decreasing its modular
defect. The proof of this statement had a flaw which was fixed by
Bonin~\cite{BoninSticky}.  Using a result of Wille~\cite{Wille67} and
Kantor~\cite{Kantor} this implies that the sticky matroid conjecture
is true if and only if it holds in the rank-4 case. 
Bonin~\cite{BoninSticky} also showed that a matroid of rank $\rk \ge
3$ with two disjoint hyperplanes is not sticky and that non-stickiness
is also implied by the existence of a hyperplane and a line that do
not intersect but can be made modular in an extension.

We generalize Bonin's result and show that a matroid is not sticky if
it has a non-modular pair  that admits an extension decreasing its
modular defect. Moreover by showing the existence of the proper
amalgam of two arbitrary extensions of the matroid we prove that in the rank-4 case this condition is also
necessary for a matroid not to be sticky. 
As a consequence from every
counterexample to Kantor's conjecture arises a matroid  that can be extended in finite steps
to a
counterexample of  the (SMC), implying the equivalence of the two conjectures.
A further consequence of our results is the equivalence of both
conjectures to the following:

\begin{conjecture}\label{conj:1}
  In every finite non-modular matroid there exists a non-modular pair and a
  single-element extension decreasing its modular defect.
\end{conjecture}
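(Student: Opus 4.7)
The plan is as follows. Let $M$ be a finite non-modular matroid, so some pair of flats $(F_1, F_2)$ has positive modular defect $\delta(F_1, F_2) = \rk(F_1) + \rk(F_2) - \rk(F_1 \vee F_2) - \rk(F_1 \wedge F_2)$. I want to produce a non-modular pair together with a single-element extension $M + e$ in which the corresponding extended pair has strictly smaller defect.

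The natural tool is Crapo's correspondence between single-element extensions of $M$ and modular cuts $\mathcal{K}$ of the lattice of flats $L(M)$: in $M + e$ one has $e \in \cl(F)$ precisely when $F \in \mathcal{K}$. If I can find a modular cut $\mathcal{K}$ containing both $F_1$ and $F_2$ but not $F_1 \wedge F_2$, then in $M + e$ the ranks of $F_1$, $F_2$, and $F_1 \vee F_2$ remain unchanged while the meet $(F_1 \wedge F_2) \cup \{e\}$ gains rank one, so $\delta$ drops by exactly one. Crucially, because $(F_1, F_2)$ is itself non-modular, the modular-cut axiom does not immediately force $F_1 \wedge F_2 \in \mathcal{K}$ from $F_1, F_2 \in \mathcal{K}$ alone; so there is no obvious local obstruction.

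My construction would start with the upward closure $\mathcal{K}_0 = \{F \in L(M) : F \supseteq F_1 \text{ or } F \supseteq F_2\}$ and iteratively close under the rule that a modular pair in $\mathcal{K}$ forces its meet into $\mathcal{K}$. The design freedom lies in the choice of $(F_1, F_2)$; I would favor extremal candidates such as a non-modular pair of hyperplanes, or a hyperplane and a line skew to it, for which $\mathcal{K}_0$ is small and the closure procedure has little room to propagate.

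The main obstacle is precisely this closure step: should any modular pair $A, B \in \mathcal{K}$ have $A \wedge B \subseteq F_1 \wedge F_2$, then $A \wedge B \in \mathcal{K}$ and hence by upward closure $F_1 \wedge F_2 \in \mathcal{K}$, ruining the attempt for that pair. Proving that every non-modular matroid harbors at least one pair immune to such cascades is the entire content of the conjecture, and by the main results of this paper it is equivalent to Kantor's Conjecture; hence no substantially simpler proof is to be expected. A reasonable entry point is the rank-$4$ case, where $L(M)$ has only four levels and modular cuts can be enumerated by hand, with the aim of either exhibiting the required extension or extracting structural obstructions that illuminate the general problem.
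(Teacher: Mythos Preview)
The statement is Conjecture~\ref{conj:1}, not a theorem; the paper does not prove it but rather shows it equivalent to Kantor's Conjecture and the Sticky Matroid Conjecture (all three remain open for finite matroids, and the paper exhibits an infinite counterexample). So there is no proof in the paper to compare against, and you acknowledge as much in your final paragraph.

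Your framework matches the paper's Proposition~\ref{intersectablePairs}: a non-modular pair $(X,Y)$ admits a defect-decreasing single-element extension if and only if the modular cut it generates is not the principal cut $\mathcal{M}_{X\cap Y}$. Your instinct about hyperplane pairs is also on target and is used in the paper (see the proof of Theorem~\ref{theorem:BoninGeneral}): a non-modular pair of hyperplanes generates the cut $\{H_1,H_2,E\}$, which is never principal, so Conjecture~\ref{conj:1} holds trivially whenever $M$ fails to be hypermodular. The genuinely hard case is thus a hypermodular non-modular matroid, and the paper shows that in rank~$4$ the putative counterexamples to Conjecture~\ref{conj:1}---the OTE matroids---are precisely the sticky ones (Theorems~\ref{theo:nine} and~\ref{theo:OTErank4sticky}); a finite non-modular specimen would already falsify the SMC. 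Your closing self-assessment is therefore correct: the outline isolates the right obstruction (the closure cascade reaching $F_1\wedge F_2$) but does not, and with present knowledge cannot, overcome it.
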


Finally, we present an example proving that the (SMC), like Kantor's
Conjecture fails in the infinite case. 

We assume familiarity with
matroid theory. The standard reference is \cite{Oxley}.

\section{Our results}

Let $ M $ be a matroid with groundset $ E $ and rank function $ \rk $.
We define the {\em modular defect $\delta(X,Y) $} of a
pair of subsets $ X,Y \subseteq E $ as 
\[\delta(X,Y)= \rk(X) + \rk(Y) - \rk(X \cup Y)
- \rk(X \cap Y).\] By submodularity of the rank function, the modular
defect is always non-negative. If it equals zero, we call $ (X,Y) $ a
{\em modular pair}.  A matroid is called {\em modular} if all pairs
of flats form a modular pair.

An {\em extension} of a matroid $M$ on a groundset $E$ is a matroid $N$ on a
groundset $\mbox{$F\supseteq E$}$ such that $M = N|E$. If $N_1, N_2$ are extensions of
a common matroid $M$ with groundsets $F_1, F_2, E$ resp.\ such that
$F_1 \cap F_2=E,$ then a matroid $A(N_1,N_2)$ with groundset $F_1\cup F_2$ is
called an {\em amalgam} of $N_1$ and $N_2$ if $\mbox{$A(N_1,N_2)|F_i=N_i$}$ for $i=1,2$.

\begin{theorem}[Ingleton see \cite{Oxley} 11.4.10 (ii)]\label{theo:modMatrSticky} If $M$ is a
  modular matroid then  for any pair $(N_1,N_2)$ of extensions of
  $M$ an amalgam exists.
\end{theorem}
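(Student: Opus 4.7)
The plan is to construct an amalgam explicitly by writing down a candidate rank function on $F_1 \cup F_2$ and verifying the matroid axioms. Writing $X_i := X \cap F_i$ for $X \subseteq F_1 \cup F_2$, I would propose
\[
\rho(X) \;=\; \rk_{N_1}(X_1) + \rk_{N_2}(X_2) - \rk_M\bigl(\cl_{N_1}(X_1) \cap \cl_{N_2}(X_2)\bigr).
\]
Since $\cl_{N_i}(X_i) \subseteq F_i$, the intersection in the last term lies in $F_1 \cap F_2 = E$, so the $M$-rank is a well-defined nonnegative integer.

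The first check is that $\rho$ extends each $\rk_{N_i}$. For $X \subseteq F_1$ one has $X_2 = X \cap E$, and since $N_2 | E = M$, the flat $\cl_{N_2}(X \cap E)$ meets $E$ in $\cl_M(X \cap E)$, which is already contained in $\cl_{N_1}(X)$. Hence the third term equals $\rk_M(X \cap E) = \rk_{N_2}(X_2)$ and cancels the second summand, leaving $\rho(X) = \rk_{N_1}(X)$; symmetrically on $F_2$.

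The heart of the proof is to show that $\rho$ is monotone, subcardinal, and submodular, so that it is the rank function of a matroid on $F_1 \cup F_2$; should the unit-increase axiom not hold for $\rho$ itself, one passes to the induced matroid via
\[
r(X) \;=\; \min_{Y \subseteq X}\bigl(\rho(Y) + |X \setminus Y|\bigr),
\]
which still restricts to $\rk_{N_i}$ on each $F_i$. Submodularity is where the modularity hypothesis on $M$ enters: expanding $\rho(X \cup Y) + \rho(X \cap Y) - \rho(X) - \rho(Y)$, the $\rk_{N_1}$- and $\rk_{N_2}$-contributions are nonpositive by their own submodularity, while the remaining $M$-rank terms have the shape
\[
\rk_M(P) + \rk_M(Q) - \rk_M(P \cup Q) - \rk_M(P \cap Q)
\]
for the two flats $P = \cl_{N_1}(X_1) \cap \cl_{N_2}(X_2)$ and $Q = \cl_{N_1}(Y_1) \cap \cl_{N_2}(Y_2)$ of $M$ (or closely related flats obtained by pushing closures through $E$). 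Because every pair of flats of the modular matroid $M$ is modular, this defect vanishes, and the desired inequality drops out.

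The main obstacle is precisely this submodularity computation: one must choose the flats $P, Q$ correctly, carefully relate closures of $X_i \cup Y_i$ and $X_i \cap Y_i$ in $N_i$ to closures in $M$ of their traces on $E$, and then invoke modularity at exactly the right point. Monotonicity and subcardinality are lighter analogues of the same estimate (subcardinality on singletons is immediate; monotonicity follows from the same push of closures together with submodularity of $\rk_{N_i}$). Once established, $\rho$ (or the induced $r$) yields a matroid $A(N_1,N_2)$ with the required restrictions, proving the theorem.
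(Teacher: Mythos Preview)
Your candidate function $\rho$ is not the one the paper (following Oxley) uses.  There one sets
\[
\eta(X)=\rk_{N_1}(X\cap F_1)+\rk_{N_2}(X\cap F_2)-\rk_M(X\cap E),
\]
so the subtracted term is $\rk_M(X\cap E)$, not $\rk_M\bigl(\cl_{N_1}(X_1)\cap\cl_{N_2}(X_2)\bigr)$, and then one takes $\xi(X)=\min\{\eta(Y):Y\supseteq X\}$ (a minimum over \emph{supersets}, not over subsets as in your fallback).  The key point is that for modular $M$ one proves submodularity of $\eta$ only on the lattice $\mathcal{L}(N_1,N_2)$ of sets whose traces on $F_1,F_2$ are flats; there $X\cap E$ and $Y\cap E$ are themselves flats of $M$, and the computation goes through cleanly.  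That already forces $\xi$ to be a matroid rank function restricting to $\rk_{N_i}$.  The paper simply invokes this and notes that the argument survives for infinite ground sets of finite rank.

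Your submodularity sketch for $\rho$ has a real gap.  Writing $C_Z=\cl_{N_1}(Z_1)\cap\cl_{N_2}(Z_2)$, the $M$-rank contribution to $\rho(X)+\rho(Y)-\rho(X\cup Y)-\rho(X\cap Y)$ is
\[
\rk_M(C_{X\cup Y})+\rk_M(C_{X\cap Y})-\rk_M(C_X)-\rk_M(C_Y),
\]
and this is \emph{not} the modular defect of the pair $(C_X,C_Y)$: one only has $C_{X\cup Y}\supseteq C_X\cup C_Y$ and $C_{X\cap Y}\subseteq C_X\cap C_Y$, with strict containment possible on either side for general $X,Y$.  Modularity of $M$ gives $\rk_M(C_X)+\rk_M(C_Y)=\rk_M(C_X\cup C_Y)+\rk_M(C_X\cap C_Y)$, so what you would still need is
\[
\bigl(\rk_M(C_{X\cup Y})-\rk_M(C_X\cup C_Y)\bigr)+\delta_1+\delta_2\ \ge\ \rk_M(C_X\cap C_Y)-\rk_M(C_{X\cap Y}),
\]
where $\delta_i$ is the submodularity slack in $N_i$.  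Nothing in your outline supplies this inequality, and your parenthetical hedge (``or closely related flats obtained by pushing closures through $E$'') shows you saw the hole.  Your fallback $r(X)=\min_{Y\subseteq X}(\rho(Y)+|X\setminus Y|)$ does not help here: that construction produces a matroid rank function only when $\rho$ is already submodular.  The standard cure is exactly the paper's: replace the subtracted term by $\rk_M(X\cap E)$, prove lattice-submodularity on $\mathcal{L}(N_1,N_2)$, and minimize over supersets.
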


We found a proof of this result only for finite matroids (see eg.\
\cite{Oxley}). We will show that it also holds for infinite matroids
of finite rank.

\begin{conjecture}[Sticky Matroid Conjecture (SMC) \cite{PoljakTurzik}]
  If $M$ is a matroid such that for all pairs $(N_1,N_2)$ of extensions
  of $M$ an amalgam exists, then $M$ is modular.
\end{conjecture}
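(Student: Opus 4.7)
The plan is to prove the contrapositive: if $M$ is non-modular, then some pair of extensions of $M$ admits no amalgam. I would split the task into two ingredients, each foreshadowed in the introduction.

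The first ingredient is a generalized non-stickiness criterion: if $M$ has a non-modular pair of flats $(F_1, F_2)$ together with a single-element extension $N$ satisfying $\delta_N(F_1, F_2) < \delta_M(F_1, F_2)$, then $M$ is not sticky. To prove this, I would set $N_1 = N$ and produce a second extension $N_2$ of $M$ by adding a free element in ``generic'' position with respect to $F_1$ and $F_2$---for instance, a point whose prescribed closure behaviour forbids it from coinciding in rank with the new element of $N_1$. Then a direct submodular-rank computation on any putative amalgam $A(N_1, N_2)$, restricted to $F_1 \cup F_2 \cup \{e_1, e_2\}$ where $e_i$ is the new element of $N_i$, should yield a contradiction. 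This would be the natural generalization of Bonin's arguments for disjoint hyperplanes and of the Bachem--Kern argument for rank $4$.

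The second ingredient is precisely Conjecture \ref{conj:1}: every non-modular matroid admits a non-modular pair $(F_1, F_2)$ of flats together with a single-element extension that reduces $\delta(F_1, F_2)$. I would attempt this by picking a non-modular pair with, say, $\delta(F_1, F_2) + \rk(F_1) + \rk(F_2)$ minimal among non-modular pairs of $M$, and then trying to construct the extension explicitly: declare a new point $e$ to lie in both $\cl(F_1)$ and $\cl(F_2)$ and extend the closure operator to the rest of $M$ in the weakest consistent way. The technical heart is to verify that this candidate really is a matroid (i.e.\ that submodularity survives on all other flats) and that $\delta$ genuinely drops on $(F_1, F_2)$ rather than being compensated by a new defect elsewhere.

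The main obstacle is unmistakably this second ingredient. There is no a priori reason why an arbitrary non-modular matroid should permit a defect-reducing extension: the matroid could be ``rigid'' in the sense that every single-element extension either preserves $\delta$ on every non-modular pair or destroys modularity of some other pair. This rigidity issue is exactly why Conjecture \ref{conj:1} remains open, and the paper's main accomplishment is to show that this existence question, the SMC, and Kantor's Conjecture are mutually equivalent, with the decisive case being rank $4$. Any direct proof of the SMC must therefore produce such extensions, a task no less difficult than Kantor's original conjecture.
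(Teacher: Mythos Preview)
The statement is the Sticky Matroid Conjecture itself, which the paper does \emph{not} prove; it is stated as an open conjecture, and the paper's contribution is to show it equivalent to Kantor's Conjecture and to Conjecture~\ref{conj:1}. There is thus no ``paper's own proof'' to compare against, and your proposal is not a proof but an honest reduction that locates the difficulty correctly.

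Your second ingredient is precisely Conjecture~\ref{conj:1}, and you are right that it is the real obstruction. In fact the paper's Theorem~\ref{theo:nine} together with Theorem~\ref{theo:OTErank4sticky} (and the reduction to rank~4 in Theorem~\ref{theo:knownForSMC}(ii)) shows that the SMC is \emph{equivalent} to Conjecture~\ref{conj:1}, so your plan is circular rather than a reduction to something easier. Moreover, Theorem~\ref{satz:ZweiterEinbettungssatz} produces an infinite non-modular rank-4 matroid that is OTE and hence sticky; thus Conjecture~\ref{conj:1} is false without finiteness, and your sketch ``declare a new point in $\cl(F_1)\cap\cl(F_2)$ and extend weakly'' cannot succeed in general---any correct argument must invoke finiteness in an essential way, which your outline does not.

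On the first ingredient: the paper does establish this (Theorem~\ref{theo:extbonin}), but your proposed $N_2$, a single free point in generic position, is too weak to force a contradiction. The paper's construction first arranges, via Lemmas~\ref{lemma:smallestModularDefect}--\ref{lem:minplusmax} and Theorem~\ref{theorem:nonOTEmatroidHyperplaneExists}, that the non-modular pair may be taken to be a hyperplane $H$ and a disjoint flat $F$ that become modular in some extension $N_1$; it then builds $N_2$ as a V\'amos-type extension of rank $\rk(M)+1$ over $F$ and $H$ (Theorem~\ref{theorem:BoninGeneral}), adding a coloop, a free point, and two further sets of points to create the obstructing configuration. A single added element does not produce the submodularity violation you are aiming for.
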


The following preliminary results concerning the (SMC) are known:

\begin{theorem}[\cite{PoljakTurzik, BachemKernSticky, BoninSticky}]\label{theo:knownForSMC} Let $M$ be a matroid.
  \begin{enumerate}
  \item If $\rk(M)\le 3$ then the (SMC)
    holds for $M$.
  \item If the $(SMC)$ holds for all rank-4 matroids, then it is true
    in all ranks.
  \item Let $l$ be a line and $H$ a hyperplane in $M$ such that
    $\mbox{$l \cap H=\emptyset$}$. If $M$ has an extension $M'$ such
    that $\rk_{M'}\left(\cl_{M'}(l)\cap\cl_{M'}(H)\right) =1$, then
    $M$ is not sticky.
\item If $M$ has two disjoint hyperplanes then is not sticky.
  \end{enumerate}
\end{theorem}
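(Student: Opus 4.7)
The plan is to handle the four items in order of difficulty: (i) and (iv) by reducing to (iii); (iii) by constructing two extensions whose amalgam fails to exist; and (ii) by an independent rank reduction.

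For (i), every matroid of rank at most two is automatically modular, so the (SMC) holds trivially in those ranks. In rank three, the only non-modular configuration of proper flats is a pair of disjoint lines $(l_1,l_2)$, both of which are hyperplanes, so I would invoke (iii) after producing the required extension via a modular-cut argument: the family $\{l_1,l_2,E\}$ is a valid modular cut, since the pair $(l_1,l_2)$ is non-modular and its intersection need not be added, and the corresponding single-element extension $M'=M+x$ places $x$ on both lines, giving $\rk_{M'}(\cl_{M'}(l_1)\cap\cl_{M'}(l_2))=1$. Item (iv) is entirely analogous: given disjoint hyperplanes $H_1,H_2$, pick a line $l\subseteq H_1$, check $l\cap H_2=\emptyset$, and form the modular cut $\{l,H_2,E\}$ to build the extension needed for (iii).

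Item (iii) is the technical heart. The strategy is to produce two extensions $N_1,N_2$ of $M$ with $F_1\cap F_2=E$ admitting no common amalgam. Set $N_1=M'$ with its extra element relabelled $x_1$; in any hypothetical amalgam $A$, restriction to $N_1$ forces $x_1\in\cl_A(l)\cap\cl_A(H)$. The simplest choice of $N_2$ as an isomorphic second copy adding $x_2$ forces similarly $x_2\in\cl_A(l)\cap\cl_A(H)$; since $\rk_A(\cl_A(l))=2$ and $\cl_A(l)\not\subseteq\cl_A(H)$ (otherwise $l\subseteq H$ back in $M=A|E$, contradicting $l\cap H=\emptyset$), only the parallel amalgam $x_1\parallel x_2$ could exist, and in fact that parallel amalgam \emph{does} exist. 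I would therefore follow Bonin and replace this symmetric $N_2$ with an asymmetric one in which a carefully placed new element is compatible with $M$ yet incompatible with being parallel to $x_1$ in any extension satisfying the $N_1$-constraints, so that every candidate rank function on $E\cup\{x_1,x_2\}$ violates submodularity or a forced rank equation. This asymmetric construction is the step that requires real bookkeeping and is the principal obstacle.

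For (ii) I would invoke the Wille--Kantor-style reduction used in \cite{BachemKernSticky, BoninSticky}: any non-modular matroid of rank greater than four carries a non-modular pair that, via iterated contractions or restrictions, persists in a rank-$4$ minor in such a way that the extensions witnessing non-stickiness at rank $4$ can be lifted to extensions of $M$. The conceptual issue underlying both (ii) and (iii), and thus the main obstacle in the theorem, is that stickiness is not a minor-closed property, so every non-sticky witness for $M$ must be constructed as an extension of $M$ itself rather than of a simpler minor or truncation.
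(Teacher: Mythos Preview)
The paper itself does not prove Theorem~\ref{theo:knownForSMC}; it is quoted from the literature. However, the paper does spell out the key mechanism behind (i) and (iii) in Section~\ref{sec:extbonin} (see the discussion after Proposition~\ref{prop:escher} and the proof of Theorem~\ref{theorem:BoninGeneral}), and comparing your sketch against that reveals a real gap at exactly the place you call ``the principal obstacle.''

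For (iii), your description of $N_2$ as ``a carefully placed new element'' misidentifies the construction. Bonin's $N_2$ is not a single-element extension of the same rank; it \emph{raises the rank by one} and adds several elements so as to erect a V\'amos-type configuration over $l$ and $H$. In the paper's generalisation (Theorem~\ref{theorem:BoninGeneral}) one adds a coloop $e$, a free point $f$, and then enough points to make two specific pairs of hyperplanes $(T_1,T_2)$ and $(B_1,B_2)$ modular; any common extension $N'$ of $N_1$ and this $N_2$ would then force $\cl_{N'}(l)\cap\cl_{N'}(H)$ into two flats $D_1,D_2$ whose ranks are pinned down, and a short count gives $\rk(D_1)+\rk(D_2)<\rk(D_1\cup D_2)+\rk(D_1\cap D_2)$, violating submodularity. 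This rank-raising V\'amos step is the idea you are missing; without it the argument for (iii) cannot be closed, and your reductions of (i) and (iv) to (iii) inherit the gap.

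Your treatment of (iv) has a separate error: the set $\{l,H_2,E\}$ is not a modular cut when $\rk(M)\ge 4$, since a modular cut must be upward closed and would therefore have to contain every flat above the line $l$ (in particular $H_1$ and every plane through $l$). Even read as ``the modular cut generated by $l$ and $H_2$,'' you would still need to show it is not the principal cut $\mathcal{M}_\emptyset$, and for an arbitrary line $l\subseteq H_1$ there is no reason this holds. The route that actually works uses the hyperplanes themselves: the modular cut generated by two disjoint hyperplanes is exactly $\{H_1,H_2,E\}$ (upward closure adds nothing new), so $(H_1,H_2)$ is always intersectable, and one then runs the V\'amos construction with $F=H_1$.
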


We will generalize the last two assertions and prove:

\begin{theorem}\label{theo:extbonin}
  Let $M$ be a matroid, $X$ and $Y$ two flats such that
  $\delta(X,Y)>0$.  If $M$ has an extension $M'$ such that
$\delta_{M'}\left(\cl_{M'}(X),\cl_{M'}(Y)\right) < \delta(X,Y)$ then $M$ is not
    sticky.
\end{theorem}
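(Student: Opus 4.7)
The plan is to produce two extensions of $M$ with no common amalgam, thereby witnessing that $M$ is not sticky.

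First, we reduce to a one-element extension. Since $\rk_{M'}(\cl_{M'}(X) \cap \cl_{M'}(Y)) > \rk_M(X \cap Y)$ while $\cl_{M'}(X) \cap \cl_{M'}(Y) \cap E = X \cap Y$, there is an element $p \notin E$ in $\cl_{M'}(X) \cap \cl_{M'}(Y)$ with $p \notin \cl_{M'}(X \cap Y)$. Replacing $M'$ by its restriction to $E \cup \{p\}$, we may assume $M' = M + p$ is a single-element extension with $\delta_{M'}(\cl_{M'}(X), \cl_{M'}(Y)) = \delta(X, Y) - 1$.

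Next, we take two extensions $N_1 = M + p_1$ and $N_2 = M + p_2$ (isomorphic copies of $M'$ on disjoint ground sets) and suppose for contradiction that an amalgam $A$ on $E \cup \{p_1, p_2\}$ exists. Since $A$ restricts to $N_i$ on $F_i := E \cup \{p_i\}$, both $p_i$ lie in $\cl_A(X) \cap \cl_A(Y)$ but not in $\cl_A(X \cap Y)$. Submodularity in $A$ gives that the flat $\cl_A(X) \cap \cl_A(Y)$ has rank at most $\rho := \rk(X) + \rk(Y) - \rk(X \cup Y) = \rk(X \cap Y) + \delta(X, Y)$. If we can show $(X \cap Y) \cup \{p_1, p_2\}$ has rank strictly exceeding $\rho$ in $A$, the contradiction is obtained.

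The main obstacle is ruling out the collapse $p_1 \sim p_2$ in any amalgam. A naive two-copy argument can fail in low rank: for $M = U_{3,4}$ with two disjoint lines $X, Y$, the amalgam of two copies of $M'$ exists with $p_1$ parallel to $p_2$, collapsing the rank contribution in the intersection flat. To prevent this, we enrich $N_2$, adjoining further points anchored along flats of $M$ so that the parallelism $p_1 \sim p_2$ becomes incompatible with the restriction $A|F_2 = N_2$. Once parallelism is excluded, $\rk_A((X \cap Y) \cup \{p_1, p_2\}) = \rk(X \cap Y) + 2$, which when $\delta(X, Y) = 1$ already violates the submodular bound above. For $\delta(X, Y) \ge 2$ we iterate the construction, enlarging both extensions by further independent points inside the intersection flat until the cumulative rank contribution exceeds $\delta(X, Y)$, ruling out any amalgam.
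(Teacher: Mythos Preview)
Your proposal has a genuine gap at the decisive step. You correctly identify that two isomorphic single-element extensions $N_1=M+p_1$ and $N_2=M+p_2$ can always be amalgamated by making $p_1$ and $p_2$ parallel, so some extra structure is needed. But the remedy you offer---``enrich $N_2$, adjoining further points anchored along flats of $M$ so that the parallelism $p_1\sim p_2$ becomes incompatible with $A|F_2=N_2$''---is not carried out, and it is unclear how it \emph{could} be. Since $p_1$ and $p_2$ arise from the \emph{same} modular cut of $M$, every incidence between $p_2$ and points of $M$ is mirrored by the corresponding incidence for $p_1$; adding further points to $N_2$ alone places no constraint on how $p_1$ sits relative to $p_2$ in an amalgam, because $A|F_1=N_1$ says nothing about those new points. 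The same vagueness afflicts the ``iterate the construction'' step for $\delta\ge 2$: you would need $\delta+1$ points that are forced to be independent over $X\cap Y$ in \emph{every} amalgam, and no mechanism for this is given.

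The paper's argument is structurally different and supplies exactly the missing ingredient. Rather than trying to force several intersection points to be independent, it first replaces the given pair $(X,Y)$ by an intersectable pair $(F,H)$ with $H$ a hyperplane and minimal modular defect (Lemmas~\ref{lemma:smallestModularDefect} and~\ref{lem:minplusmax}, Theorem~\ref{theorem:nonOTEmatroidHyperplaneExists}). It then builds two \emph{qualitatively different} extensions: one, $N_1$, in which $(\cl F,\cl H)$ becomes a modular pair, and a second, $N_2$, obtained by erecting a V\'amos-type configuration over $F$ and $H$ (Theorem~\ref{theorem:BoninGeneral}) with the property that in \emph{every} further extension of $N_2$ the pair $(\cl F,\cl H)$ remains non-modular. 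These two requirements are directly contradictory, so no amalgam can exist. The V\'amos-type obstruction is the key idea your sketch lacks; it cannot be replaced by adding generic points to a second copy of $M+p$.
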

We postpone the proof of Theorem~\ref{theo:extbonin} to
Section~\ref{sec:extbonin}.

We call a matroid {\em hypermodular} if
each pair of hyperplanes forms a modular pair.
With this notion we can rephrase Kantor's Conjecture.

\begin{conjecture}[Kantor \cite{Kantor}, page 192]
Every finite hypermodular matroid embeds into a
  modular matroid.
\end{conjecture}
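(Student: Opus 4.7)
Since Kantor's Conjecture is stated here as an open problem -- one the paper itself will link to the (SMC) and to Conjecture~\ref{conj:1} -- I describe the attack I would pursue rather than claim a resolution.

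The plan is to route everything through Conjecture~\ref{conj:1}. Granting the equivalences the paper announces (Kantor $\Leftrightarrow$ (SMC) $\Leftrightarrow$ Conjecture~\ref{conj:1}), the goal reduces to the local statement that every finite non-modular matroid admits a non-modular pair $(X,Y)$ of flats and a single-element extension that strictly decreases $\delta(X,Y)$. By Theorem~\ref{theo:knownForSMC}(ii) combined with the same equivalences, it further suffices to handle the rank-$4$ case. Iterating such local improvements over the (finitely many) pairs of flats in a finite matroid of bounded rank then produces, in finitely many single-element extensions, a matroid in which every pair of flats is modular, i.e.\ a modular matroid into which the original hypermodular matroid is embedded.

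For the rank-$4$ case I would start with a hypermodular $M$ that fails to be modular. Hypermodularity kills all hyperplane--hyperplane defects, so any surviving non-modular pair $(X,Y)$ must involve a hyperplane and a line with $\rk(X\cap Y)$ too small, or two lines in general position. For each type the natural candidate extension adds a single point on the ``missing'' intersection: in the hyperplane--line case with $H\cap l=\emptyset$, place a new point $p$ which in $M'$ lies on $\cl_{M'}(l)$ and in $\cl_{M'}(H)$; in the line--line case a point in both closures. One then writes down an explicit rank function on $E\cup\{p\}$, checks submodularity and monotonicity in the spirit of the proof scheme that will be used for Theorem~\ref{theo:extbonin}, and verifies that $\delta_{M'}(\cl_{M'}(X),\cl_{M'}(Y))<\delta(X,Y)$ while controlling the defects of other pairs so that a termination argument still applies.

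The main obstacle is exactly the content of Conjecture~\ref{conj:1}: there is no known uniform construction producing a defect-decreasing extension for an arbitrary non-modular matroid. Bonin's non-sticky examples show that candidate extensions do not always arise from obvious geometric operations, and an iteration scheme must ensure that resolving one non-modular pair does not create new pairs with larger defect. A complete proof would therefore need either a structural classification of non-modular rank-$4$ matroids fine enough to exhibit the extension case by case, or a general amalgamation principle (in the spirit of the proper amalgam result the paper promises for the converse direction) that guarantees the required extension abstractly. Absent either ingredient, even the rank-$4$ reduction does not close the argument, which is precisely why Kantor's Conjecture remains open.
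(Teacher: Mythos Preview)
You have correctly identified that this is stated as an open conjecture, not a theorem: the paper provides no proof of Kantor's Conjecture and does not claim one. Your decision to describe an approach rather than fabricate a resolution is exactly right.

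Your proposed line of attack --- route through Conjecture~\ref{conj:1}, reduce to rank~$4$, and attempt to manufacture defect-decreasing single-element extensions --- is precisely the reformulation the paper itself establishes. The paper's contribution is to prove the equivalences (Kantor $\Leftrightarrow$ (SMC) $\Leftrightarrow$ Conjecture~\ref{conj:1}) and the rank-$4$ reduction, thereby isolating Conjecture~\ref{conj:1} as the remaining obstruction; it does not go further toward resolving it. Your diagnosis of the obstacle --- that no uniform construction of such an extension is known, and that iterating local fixes may create new non-modular pairs --- is accurate and matches the state of the problem. One small sharpening: in a hypermodular rank-$4$ matroid the surviving non-modular pairs are exactly disjoint coplanar lines or disjoint line--plane pairs (this is what Proposition~\ref{pro:HypModMatrEbenenLinien} and Lemma~\ref{ZusammenfassungRang4HauptfilterEtaNichtSubmodular}(iii) pin down), and the genuine obstruction to the na\"{\i}ve ``add the missing intersection point'' move is Theorem~\ref{theorem:2}(iii): the modular cut generated by two disjoint coplanar lines may be the full principal cut precisely when a V\'amos restriction is present, so the desired extension need not exist at all.
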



Like the (SMC) Kantor's Conjecture can be reduced to the rank-4
case (see Corollary~\ref{corollar:KantorRang4}, Section~\ref{sec:embed}).

  Next, we consider the
correspondence between single-element extensions of matroids and modular
cuts.

\begin{definition}\label{def:modularerFilter}
A set $ \mathcal{M} $ of flats of a matroid $ M $ is called a {\em modular cut of 
$ M $} if the following holds:\enlargethispage{1cm}
\begin{enumerate}
\item If $ F \in \mathcal{M} $ and $ F' $ is a flat in $ M $ with $ F' \supseteq F $, then $ F' \in \mathcal{M} $. 
\item If $ F_1, F_2 \in \mathcal{M} $ and $ (F_1,F_2) $ is a modular pair, then $ F_1 \cap F_2 \in \mathcal{M} $. 
\end{enumerate}
\end{definition}

\begin{theorem}[Crapo 1965 \cite{Crapo}]
  There is a one-{to}-one-correspondence between the single-element extensions 
  $M+_{\mathcal{M}}p$ of a matroid $M$ and the modular cuts $ \mathcal{M} $ of
  $M$. $\mathcal{M}$ consists precisely of the set of flats of $ M $ containing the new point $ p $ in $M+_{\mathcal{M}}p$. 

\end{theorem}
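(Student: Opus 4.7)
The plan is to establish the bijection in both directions by constructing explicit maps and then verifying that they invert each other.

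For the direction from extensions to modular cuts, let $N = M+p$ be any single-element extension on $E\cup\{p\}$ and define
\[\mathcal{M} := \{F \text{ flat of } M \, : \, p \in \cl_N(F)\}.\]
The upward-closure property is immediate from monotonicity of $\cl_N$: if $F \subseteq F'$ are flats of $M$ with $p\in\cl_N(F)$, then $p\in\cl_N(F')$, so $F'\in\mathcal{M}$. For the modular-pair property, I would take $F_1,F_2\in\mathcal{M}$ forming a modular pair in $M$ and compute ranks in $N$. Since $p\in\cl_N(F_i)$, we have $\rk_N(F_i\cup\{p\})=\rk_N(F_i)=\rk(F_i)$ for $i=1,2$, and $\rk_N(F_1\cup F_2\cup\{p\})=\rk(F_1\cup F_2)$. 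Submodularity of $\rk_N$ applied to $F_1\cup\{p\}$ and $F_2\cup\{p\}$, combined with the modularity of $(F_1,F_2)$ in $M$, forces $\rk_N((F_1\cap F_2)\cup\{p\})=\rk(F_1\cap F_2)$, i.e.\ $p\in\cl_N(F_1\cap F_2)$, so $F_1\cap F_2\in\mathcal{M}$.

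For the reverse direction, given a modular cut $\mathcal{M}$, I would define an extension $M+_{\mathcal{M}}p$ by the rank function
\[r'(X) := \rk(X\setminus\{p\}) + \begin{cases}0 & \text{if } p\notin X \text{ or } \cl_M(X\setminus\{p\})\in\mathcal{M},\\ 1 & \text{otherwise.}\end{cases}\]
The values $r'(\emptyset)=0$, non-negativity, and unit-increase monotonicity follow immediately from the corresponding properties of $\rk$ together with condition (i) in the definition of a modular cut (which ensures the "jump" is monotone). The delicate verification is submodularity $r'(A)+r'(B)\ge r'(A\cup B)+r'(A\cap B)$. If $p$ lies in none or exactly one of $A,B$, the inequality reduces to submodularity of $\rk$ in $M$, and the only other subcase that does not reduce trivially is when $p\in A\cap B$ and both $F_A:=\cl_M(A\setminus\{p\})$ and $F_B:=\cl_M(B\setminus\{p\})$ lie in $\mathcal{M}$ while $\cl_M((A\cap B)\setminus\{p\})\notin\mathcal{M}$. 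Here I would use submodularity of $\rk$ to show that equality must hold throughout, forcing $(F_A,F_B)$ to be a modular pair in $M$; condition (ii) of the modular cut then puts $F_A\cap F_B$ into $\mathcal{M}$, and upward closure then yields the desired contradiction, which is the main obstacle of the proof.

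Finally, to verify bijectivity, starting from $\mathcal{M}$, constructing the extension $N=M+_{\mathcal{M}}p$, and reading off its associated modular cut $\mathcal{M}'=\{F\text{ flat}:p\in\cl_N(F)\}$, the defining formula of $r'$ gives $p\in\cl_N(F)$ iff $F\in\mathcal{M}$, so $\mathcal{M}'=\mathcal{M}$. Conversely, starting from an arbitrary extension $N$, extracting $\mathcal{M}$, and building $N'=M+_{\mathcal{M}}p$, the identity $\rk_N(X\cup\{p\})=\rk_N(X)$ iff $\cl_M(X)\in\mathcal{M}$ shows that the rank functions of $N$ and $N'$ coincide on all subsets of $E\cup\{p\}$, so $N=N'$. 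This establishes the one-to-one correspondence and the characterization of $\mathcal{M}$ as the collection of flats of $M$ whose closure in the extension contains $p$.
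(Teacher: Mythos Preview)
The paper does not prove this theorem; it merely cites it as Crapo's 1965 result, so there is no ``paper's own proof'' to compare against. Your outline is essentially the standard argument and is largely sound, but there is one genuine slip in the submodularity verification.

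In the critical subcase ($p\in A\cap B$, $F_A,F_B\in\mathcal{M}$, $\cl_M((A\cap B)\setminus\{p\})\notin\mathcal{M}$), you correctly note that a failure of submodularity for $r'$ forces equality in the submodular inequality for $\rk$ on $A\setminus\{p\}$ and $B\setminus\{p\}$, and that this makes $(F_A,F_B)$ a modular pair, whence $F_A\cap F_B\in\mathcal{M}$. But you then write that ``upward closure yields the desired contradiction.'' Upward closure goes the wrong way: you only know $\cl_M((A\cap B)\setminus\{p\})\subseteq F_A\cap F_B$, and membership of the \emph{larger} flat in $\mathcal{M}$ does not descend to the smaller one. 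What actually finishes the argument is that the same chain of equalities also gives
\[
\rk\bigl((A\cap B)\setminus\{p\}\bigr)=\rk(F_A\cap F_B),
\]
and since $F_A\cap F_B$ is a flat containing $(A\cap B)\setminus\{p\}$ of the same rank, one concludes $\cl_M((A\cap B)\setminus\{p\})=F_A\cap F_B\in\mathcal{M}$, the desired contradiction. A minor related point: the ``$p$ in exactly one of $A,B$'' case does not reduce purely to submodularity of $\rk$; it also needs upward closure of $\mathcal{M}$ to compare the $\{0,1\}$ correction terms for $A$ and $A\cup B$.
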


The set of all flats of a matroid $ M $ is a modular cut, the {\em
  trivial modular cut}, corresponding to an extension with a loop.
The empty set is a modular cut corresponding to an extension with a
coloop, the only single-element extension increasing the rank of $ M $.
For a flat $ F $ of $ M $, the set $ \mathcal{M}_F = \{ G \mid G \text{ is a
flat of } M \text{ and } G \supseteq F \} $ is a modular cut of $ M $. We
call it a {\em principal modular cut}. We say that in the
corresponding extension the new point is {\em freely added} to $ F $.
A {\em modular cut $\mathcal{M}_{\mathcal{A}}$ generated by a set of flats $
  \mathcal{A} $} is the smallest modular cut containing $
\mathcal{A} $.  

The following is immediate from Theorem 7.2.3 of \cite{Oxley}.
\begin{prop}\label{intersectablePairs}
  If $(X,Y)$ is a non-modular pair of flats of a matroid $M$, then
  there exists an extension decreasing its modular defect (we call the pair { \em intersectable}) if and only
  if the modular cut generated by $X$ and $Y$ is not the principal
  modular cut $\mathcal{M}_{X \cap Y}$.
\end{prop}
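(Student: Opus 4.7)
The plan uses Theorem~7.2.3 of \cite{Oxley}, which gives the rank function of a single-element extension $N = M +_{\mathcal{M}} p$: for $S \subseteq E$, $\rk_N(S \cup \{p\}) = \rk(S)$ when $\cl_M(S) \in \mathcal{M}$, and $\rk(S)+1$ otherwise. The guiding observation is that for any extension $M'$ of $M$ one has $\rk_{M'}(\cl_{M'}(X)) = \rk(X)$, $\rk_{M'}(\cl_{M'}(Y)) = \rk(Y)$, and $\rk_{M'}(\cl_{M'}(X) \cup \cl_{M'}(Y)) = \rk_{M'}(\cl_{M'}(X \cup Y)) = \rk(X \cup Y)$; hence the inequality $\delta_{M'}(\cl_{M'}(X), \cl_{M'}(Y)) < \delta(X,Y)$ is equivalent to $\rk_{M'}(\cl_{M'}(X) \cap \cl_{M'}(Y)) > \rk(X \cap Y)$.

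For the ``if'' direction I would first note that $\mathcal{M}_{\{X,Y\}} \subseteq \mathcal{M}_{X \cap Y}$ holds trivially, and that if $X \cap Y$ were contained in $\mathcal{M}_{\{X,Y\}}$ then property~(i) of Definition~\ref{def:modularerFilter} would force $\mathcal{M}_{\{X,Y\}} \supseteq \mathcal{M}_{X \cap Y}$, contradicting the hypothesis. So $X, Y \in \mathcal{M}_{\{X,Y\}}$ while $X \cap Y \notin \mathcal{M}_{\{X,Y\}}$. Taking $M' = M +_{\mathcal{M}_{\{X,Y\}}} p$ and applying the rank formula above yields $\cl_{M'}(X) = X \cup \{p\}$ and $\cl_{M'}(Y) = Y \cup \{p\}$, so $\cl_{M'}(X) \cap \cl_{M'}(Y) = (X \cap Y) \cup \{p\}$, a set of rank $\rk(X \cap Y)+1$ in $M'$. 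Substituting into the definition of $\delta_{M'}$ gives $\delta_{M'}(\cl_{M'}(X), \cl_{M'}(Y)) = \delta(X,Y)-1$.

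For the ``only if'' direction I would take an arbitrary extension $M'$ with $\delta_{M'}(\cl_{M'}(X), \cl_{M'}(Y)) < \delta(X,Y)$ and reduce to a single-element extension. The guiding observation produces a point $p \in (\cl_{M'}(X) \cap \cl_{M'}(Y)) \setminus \cl_{M'}(X \cap Y)$; the identity $\cl_{M'}(F) \cap E = F$ for every flat $F$ of $M$ ensures $p \notin E$. Restricting $M'$ to $E \cup \{p\}$ produces a single-element extension $N$ of $M$, which by Crapo's theorem equals $M +_{\mathcal{M}} p$, with $\mathcal{M}$ consisting of those flats $F$ of $M$ for which $p \in \cl_N(F)$. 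The choice of $p$ places $X$ and $Y$ in $\mathcal{M}$ and leaves $X \cap Y$ out, so $\mathcal{M}_{\{X,Y\}} \subseteq \mathcal{M}$ avoids $X \cap Y$ and hence differs from $\mathcal{M}_{X \cap Y}$.

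The principal obstacle is the reduction to a single-element extension in the ``only if'' direction, since the hypothesis only supplies an extension of arbitrary size; but once the witness $p$ has been identified, the equalities $\rk_N(S) = \rk_{M'}(S)$ for $S \subseteq E \cup \{p\}$ together with Crapo's theorem make the rest essentially bookkeeping.
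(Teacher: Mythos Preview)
Your proposal is correct and follows exactly the route the paper indicates: the paper does not spell out a proof but simply states that the proposition ``is immediate from Theorem~7.2.3 of \cite{Oxley}'', and your argument is precisely the natural unpacking of that reference, using the rank formula for $M+_{\mathcal{M}}p$ together with Crapo's correspondence to handle both directions.
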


We call a matroid {\em OTE (only trivially extendable)} if all of its
modular cuts different from the empty modular cut are principal.

Most of this paper will be devoted to the proof of the following theorem.

\begin{theorem}\label{theo:OTErank4sticky}
  If $M$ is a rank-4 matroid that is OTE, then $M$ is sticky.
\end{theorem}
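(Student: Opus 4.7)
The plan is to prove that for any two extensions $N_1$ on $F_1$ and $N_2$ on $F_2$ of $M$ an amalgam on $F_1\cup F_2$ exists. The OTE hypothesis enters through Proposition~\ref{intersectablePairs}: if every non-empty modular cut of $M$ is principal, then the modular cut generated by any non-modular pair $(X,Y)$ of flats of $M$ equals $\mathcal{M}_{X\cap Y}$, so $(X,Y)$ is not intersectable and by Theorem~\ref{theo:extbonin} no extension of $M$ can reduce the modular defect of any pair of its flats. A useful consequence is that any new point $p$ in any extension of $M$ corresponds, via the collection of flats $G$ of $M$ with $p\in\cl(G)$, to a principal modular cut $\mathcal{M}_{F_p}$ of $M$: this collection is always a modular cut of $M$ (ranks in an extension agree with $\rk_M$ on subsets of $E$), and under OTE it must be principal.

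I would construct the amalgam by induction on $|(F_1\cup F_2)\setminus E|$, adjoining one new point at a time. At each step one extends an already-built amalgam $A$ by a point $p$ coming from either $N_1$ or $N_2$, say from $N_2$. The lifting principle is that since $p$ corresponds to the principal cut $\mathcal{M}_{F_p}$ of $M$, one may add it to $A$ via the principal cut $\mathcal{M}_{\cl_A(F_p)}$; the identity $\cl_A(G)\cap E=G$ for every flat $G$ of $M$ gives $F_p\subseteq G\iff\cl_A(F_p)\subseteq\cl_A(G)$, so the new extension restricts correctly on $E\cup\{p\}$. The candidate amalgam rank function can be written in Ingleton style as
\[
\rk_A(X)=\min\bigl\{\rk_{N_1}(Y_1)+\rk_{N_2}(Y_2)-\rk_M(Y_1\cap Y_2\cap E):Y_i\subseteq F_i,\ Y_1\cup Y_2\supseteq X\bigr\},
\]
and what must be checked is that it satisfies the matroid axioms (the crucial one being submodularity).

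The main obstacle is the consistency of these single-point lifts with the dependencies that the new point may have with earlier-amalgamated points. Any such inconsistency---equivalently, any submodularity violation of the formula above---would force a non-modular pair of flats of $M$ to acquire strictly smaller modular defect either in $N_1$, in $N_2$, or in the partial amalgam, and this is exactly what OTE combined with Theorem~\ref{theo:extbonin} forbids. The rank-4 hypothesis is what makes the analysis tractable: the flat lattice of $M$ has only five levels, so non-modular pairs come in a short list of shapes (two planes meeting in a point, a skew line and plane, two coplanar disjoint lines), and each potential conflict can be resolved by inspecting how the principal cuts attached to the new points interact with this short list. I expect the heart of the proof to be this case analysis together with the verification that the lifted modular cut at each inductive step really is a modular cut of $A$.
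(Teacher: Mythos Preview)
Your inductive single-point construction has a genuine gap: adding $p\in N_2$ to the partial amalgam $A$ via the principal cut $\mathcal{M}_{\cl_A(F_p)}$ only guarantees that $A+p$ restricts correctly to $E\cup\{p\}$; it says nothing about the dependencies between $p$ and the points of $N_2$ already adjoined. For instance, take $p$ freely added to $M$ and then $q$ added on the line through $p$ and some $m\in E(M)$: both have $F_p=F_q=E(M)$, so your recipe adds each freely to the current amalgam, and the collinearity of $\{m,p,q\}$ in $N_2$ is lost. So the principal-cut lift cannot be the actual construction; at best it motivates the Ingleton-type rank formula you write down, which is indeed what the paper uses.

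The more serious issue is your diagnosis of why submodularity holds. You argue that a failure would force some non-modular pair of flats of $M$ to have strictly smaller modular defect in $N_1$ or $N_2$, and that OTE forbids this. But that is not what happens. When the auxiliary function $\eta$ fails submodularity on $(X,Y)\in\mathcal{L}(N_1,N_2)$, one finds that $(X\cap E_i,Y\cap E_i)$ is a \emph{modular} pair of flats in $N_i$ whose restriction $(X\cap T,Y\cap T)$ is non-modular in $M$; these flats are strictly larger than $\cl_{N_i}(X\cap T),\cl_{N_i}(Y\cap T)$, and the latter pair still has the original defect. So $\eta$ \emph{does} fail submodularity on some pairs, and non-intersectability alone does not rescue you. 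The paper's actual mechanism is to show that in this situation $\xi(X\cup Y)<\eta(X\cup Y)$ or $\xi(X\cap Y)<\eta(X\cap Y)$, so that $\xi$ is nonetheless submodular. The key input for this is Theorem~\ref{theorem:2}(iii): a non-modular rank-$4$ OTE matroid contains a V\'amos-type configuration on any pair of disjoint coplanar lines, and this is used (via the lemmas of Section~6, culminating in Corollary~\ref{Cor:ModularePaareNichtMÃ¶glich}) to force $T\subseteq\cl_{N_i}((X\cup Y)\cap E_i)$ whenever $(X\cap E_i,Y\cap E_i)$ is modular with non-modular trace on $T$. This structural fact is the missing idea in your outline. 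Finally, note that OTE implies hypermodularity, so two planes in $M$ always meet in a line; the non-modular pairs to analyse are only the disjoint coplanar lines and the disjoint line-plane pairs.
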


As we will prove with Theorem~\ref{theo:nine},
Theorem~\ref{theo:extbonin} implies that a matroid that is not OTE is
not sticky. Hence Theorem~\ref{theo:OTErank4sticky} implies that for
rank-4 matroids being sticky is equivalent to being OTE. Since the
(SMC) is reducible to the rank-4 case, it is equivalent to the
conjecture that every $\mbox{rank-$4$}$ matroid that is OTE is already
modular.  For finite matroids, this is our Conjecture~\ref{conj:1},
which is also reducible to the rank-4 case (see the remark after the
proof of Corollary \ref{corollar:KantorRang4}).

  Like Kantor's Conjecture our Conjecture~\ref{conj:1} is no
  longer true in the infinite case. This will be a consequence of the
  following theorem, proven in Section~\ref{sec:embed}.

\begin{theorem}
  Every finite matroid can be extended to a (not
  necessarily finite) matroid of the same rank that is OTE.
\end{theorem}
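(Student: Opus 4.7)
The plan is a transfinite iteration in the spirit of Marshall Hall's free projective plane construction. Starting from $M_0:=M$, I build a chain of rank-$r$ extensions $M_0\subseteq M_1\subseteq\cdots$, where $r=\rk M$. Fix a diagonal bookkeeping that enumerates ``tasks''---triples $(\alpha,X,Y)$ with $X,Y$ flats of $M_\alpha$---in a well-ordered sequence that revisits every triple cofinally often. At a successor step $\beta+1$ with current task $(\alpha,X,Y)$, $\alpha\leq\beta$, set $X':=\cl_{M_\beta}(X)$ and $Y':=\cl_{M_\beta}(Y)$. If $(X',Y')$ is intersectable and non-modular in $M_\beta$, then by Proposition~\ref{intersectablePairs} the modular cut $\langle X',Y'\rangle$ of $M_\beta$ differs from $\mathcal{M}_{X'\cap Y'}$, and Crapo's theorem supplies a single-element extension $M_{\beta+1}:=M_\beta+_{\langle X',Y'\rangle}p_\beta$ which strictly decreases $\delta_{M_\beta}(X',Y')$ by one; otherwise I put $M_{\beta+1}:=M_\beta$. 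At limit ordinals I take the direct limit on the union of ground sets. Each step is an extension by a non-empty modular cut and hence preserves rank, and direct limits of finite-rank matroids have the same rank, so $M_\infty$ is a rank-$r$ extension of $M$.

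Next I would argue $M_\infty$ is OTE. Otherwise $M_\infty$ has a non-principal non-empty modular cut, and two of its minimal flats give an intersectable non-modular pair $(X,Y)$ in $M_\infty$ by Proposition~\ref{intersectablePairs}. Because $\rk X,\rk Y\le r$, finite subsets of $X$, $Y$, $X\cap Y$ witnessing their ranks lie in $E_\alpha$ for some large enough $\alpha$; then $X_\alpha:=\cl_{M_\alpha}(X\cap E_\alpha)$ and $Y_\alpha:=\cl_{M_\alpha}(Y\cap E_\alpha)$ satisfy $\cl_{M_\infty}(X_\alpha)=X$, $\cl_{M_\infty}(Y_\alpha)=Y$ and $\delta_{M_\alpha}(X_\alpha,Y_\alpha)=\delta_{M_\infty}(X,Y)>0$. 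Restricting the single-element extension of $M_\infty$ witnessing intersectability of $(X,Y)$ to $E_\alpha\cup\{q\}$ yields an extension of $M_\alpha$ witnessing intersectability of $(X_\alpha,Y_\alpha)$, so $(\alpha,X_\alpha,Y_\alpha)$ is a legitimate task. Under any extension $M_\beta\subseteq M_\gamma$ the ranks of $X_\alpha$, $Y_\alpha$, and $X_\alpha\cup Y_\alpha$ are preserved while $\rk(\cl(X_\alpha)\cap\cl(Y_\alpha))$ can only grow, so the defect of this pair is monotone non-increasing and bounded above by $2r$. Processing the task cofinally often therefore drives its defect to zero, forcing $(X,Y)$ to be modular in $M_\infty$, a contradiction.

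The chief technical obstacle is the bookkeeping: one must produce a single well-ordered enumeration of all triples $(\alpha,X,Y)$ arising at any stage of the construction---even though new flats, and hence new triples, keep being created as new points are added---in which each triple recurs cofinally often. A standard diagonal enumeration over a sufficiently large ordinal (e.g.\ $\omega_1$ if the construction stays countable, and a suitable larger cardinal otherwise) handles this.
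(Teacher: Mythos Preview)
Your argument is correct and follows the same strategy as the paper's proof of Theorem~\ref{satz:ZweiterEinbettungssatz}: iteratively perform single-element extensions that decrease modular defects of intersectable pairs, pass to a limit, and argue by contradiction that the limit is OTE using the finite-rank stabilization of bases together with a restriction argument.

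The main difference is organizational. You set up one transfinite iteration with a diagonal bookkeeping scheme revisiting every task cofinally often, and you correctly flag this bookkeeping as the chief technical obstacle. The paper sidesteps it entirely with a two-level countable iteration: the inner loop lists \emph{all} intersectable non-modular pairs of the current matroid and, for each, extends until its defect can no longer be decreased; the outer loop repeats this on the resulting union, producing a chain $\overline{M},\overline{M}_1,\overline{M}_2,\ldots$ whose union is $\overline{\overline{M}}$. If $\overline{\overline{M}}$ had an intersectable non-modular pair $(f_1,f_2)$, then some $\overline{M}_k$ already contains bases of $f_1$ and $f_2$, so this pair appears on the list for the passage $\overline{M}_k\to\overline{M}_{k+1}$ and is rendered non-intersectable there---no diagonalization needed, because the outer re-listing automatically catches newly created flats. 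Since the starting matroid is finite (or countable), everything stays within $\omega$-indexed chains.

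One small point in your write-up: the sentence ``processing the task cofinally often therefore drives its defect to zero'' tacitly uses that the pair is intersectable at \emph{every} processing stage, not just at stage~$\alpha$. Your restriction argument in fact applies verbatim at any $\beta\ge\alpha$ (restrict the witness extension of $M_\infty$ to $E_\beta\cup\{q\}$), so this is immediate; alternatively, once you have chosen $\alpha$ with $\delta_{M_\alpha}(X_\alpha,Y_\alpha)=\delta_{M_\infty}(X,Y)$, a single processing step at any $\beta\ge\alpha$ already contradicts the constancy of the defect, and the cofinality is not needed.
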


Starting from, say, the V\'amos matroid this yields an infinite rank-4
non-modular matroid that is OTE, hence a counterexample to the (SMC)
in the infinite case.

Finally, Theorem~\ref{satz:ErsterEinbettungssatz} will imply that any
finite counterexample to Kantor's Conjecture can be embedded into a
finite non-modular matroid that is OTE. In the rank-$4$ case any
counterexample to Kantor's Conjecture this way yields a finite
counterexample to the (SMC). We wil show in Corollary
\ref{corollar:KantorRang4} that Kantor's Conjecture is reducible to
the rank-4 case, hence the (SMC) implies Kantor's Conjecture. It had
already been observed by Faigle (see \cite{BachemKernSticky}) and was
explicitely mentioned by Bonin in \cite{BoninSticky} that Kantor's
Conjecture implies the (SMC).  The latter is now immediate from
Theorem~\ref{theo:extbonin} and the former establishes the equivalence
of the two conjectures.
\begin{corollary}
  Kantor's Conjecture holds true if and only if the Sticky Matroid
  Conjecture holds true.
\end{corollary}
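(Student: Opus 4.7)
The plan is to establish both implications of the biconditional, using Theorem~\ref{theo:extbonin} for the direction Kantor $\Rightarrow$ (SMC) and using Theorem~\ref{theo:OTErank4sticky} together with the rank-$4$ reductions for the converse. Throughout, the relevant statements are taken in the finite case.

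For Kantor $\Rightarrow$ (SMC), let $M$ be a sticky matroid; I aim to show $M$ is modular. I first argue that $M$ is hypermodular. If $(H_1, H_2)$ were a non-modular pair of hyperplanes of $M$, the collection $\mathcal{M} = \{H_1, H_2, E\}$ would be a modular cut: closure under supersets holds because the only flat properly containing a hyperplane is $E$, and the only nontrivial intersection condition to check is that of $(H_1, H_2)$ itself, which is non-modular by assumption and so imposes nothing. By Crapo's theorem this yields a single-element extension $M' = M+_{\mathcal{M}} p$ in which $p \in \cl_{M'}(H_1) \cap \cl_{M'}(H_2)$ but, since $H_1 \cap H_2 \notin \mathcal{M}$, $p \notin \cl_{M'}(H_1 \cap H_2)$. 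A direct rank computation then gives $\delta_{M'}\bigl(\cl_{M'}(H_1), \cl_{M'}(H_2)\bigr) = \delta(H_1, H_2) - 1$, and Theorem~\ref{theo:extbonin} contradicts stickiness. Hence $M$ is hypermodular, so by Kantor's Conjecture it embeds in a modular matroid $N$, which we regard as an extension of $M$. If $M$ had any non-modular pair of flats $(X,Y)$, modularity of $N$ would force $\delta_N\bigl(\cl_N(X), \cl_N(Y)\bigr) = 0 < \delta_M(X,Y)$, and Theorem~\ref{theo:extbonin} would again contradict stickiness. Thus $M$ is modular.

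For (SMC) $\Rightarrow$ Kantor, I argue the contrapositive. Suppose Kantor's Conjecture fails; by Corollary~\ref{corollar:KantorRang4} we may fix a finite rank-$4$ hypermodular matroid $M$ that does not embed into any modular matroid. By Theorem~\ref{satz:ErsterEinbettungssatz}, $M$ can be extended to a finite non-modular rank-$4$ matroid $M'$ that is OTE. Theorem~\ref{theo:OTErank4sticky} then guarantees that $M'$ is sticky, while its non-modularity contradicts (SMC).

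Almost all of the real work is delegated to the earlier results (Theorems~\ref{theo:extbonin} and~\ref{theo:OTErank4sticky}, Corollary~\ref{corollar:KantorRang4}, and the embedding Theorem~\ref{satz:ErsterEinbettungssatz}); once those are in hand, the assembly above is short. The only fresh verification required specifically here is the modular-cut check for $\{H_1, H_2, E\}$ together with the rank computation in the Crapo extension, and I expect this to be the only place where care is needed --- in particular one must use that the non-modularity of $(H_1, H_2)$ is precisely what prevents the cut from being forced to absorb $H_1 \cap H_2$, which in turn is what makes the extension strictly decrease the modular defect.
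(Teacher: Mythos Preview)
Your argument is correct and matches the paper's approach. The paper does not give a formal proof of the Corollary; it simply observes in the surrounding text that Kantor $\Rightarrow$ (SMC) is ``immediate from Theorem~\ref{theo:extbonin}'' (the observation that Kantor implies (SMC) is attributed to Faigle and Bonin), and that (SMC) $\Rightarrow$ Kantor follows from Theorem~\ref{satz:ErsterEinbettungssatz}, Theorem~\ref{theo:OTErank4sticky}, and the rank-$4$ reduction in Corollary~\ref{corollar:KantorRang4}. Your write-up is a faithful fleshing-out of exactly this sketch; the modular-cut check for $\{H_1,H_2,E\}$ that you single out is the same fact the paper uses elsewhere (``any non-modular pair of hyperplanes is intersectable''), and the one point you leave implicit---that the OTE extension $M'$ must be non-modular because otherwise $M$ would embed in a modular matroid---is immediate.
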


\section{Proof of Theorem~\ref{theo:extbonin}}\label{sec:extbonin}
We start with a proposition that states that the so called Escher matroid (\cite{Oxley} Fig.\ 1.9) is not a matroid.
For easier readability we use lattice theoretic notation here, i.e. $x \vee y$ for $\cl(x \cup y)$,
$x \wedge y$ for $\cl(x \cap y)$ and $x \le y$ for $cl(x) \subseteq cl(y)$.

\begin{prop}\label{prop:escher}
  Let  $l_1,l_2,l_3$ be three lines in a matroid  that are pairwise
  coplanar but not all lying in a plane. If $l_1$ and $l_2$ intersect in a
  point $p$, then $p$ must also be contained in $l_3$.
\end{prop}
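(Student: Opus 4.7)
The plan is to identify the intersection $\pi_{13} \wedge \pi_{23}$ of the two planes $\pi_{13} := l_1 \vee l_3$ and $\pi_{23} := l_2 \vee l_3$ and show that it coincides with $l_3$; once this is done, $p$ automatically lies in both planes and hence in their intersection. The coplanarity hypothesis tells us that each $\pi_{ij}$ is a flat of rank $3$, while the join $\pi_{13} \vee \pi_{23} = l_1 \vee l_2 \vee l_3$ has rank at least $4$ because by assumption no plane contains all three lines.

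The main step is then a one-line submodularity computation,
\[
\rk(\pi_{13} \wedge \pi_{23}) \le \rk(\pi_{13}) + \rk(\pi_{23}) - \rk(\pi_{13} \vee \pi_{23}) \le 3+3-4 = 2.
\]
Since $l_3 \le \pi_{13} \wedge \pi_{23}$ and $\rk(l_3)=2$, equality must hold and $\pi_{13} \wedge \pi_{23} = l_3$.

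To finish, $p \le l_1 \le \pi_{13}$ and $p \le l_2 \le \pi_{23}$ together give $p \le \pi_{13} \wedge \pi_{23} = l_3$, which is the desired conclusion. I do not anticipate any real obstacle here; the argument reduces to recognising that three pairwise coplanar but not jointly coplanar lines pin the rank of $l_1 \vee l_2 \vee l_3$ down to exactly $4$, after which submodularity of $\rk$ does all of the work.
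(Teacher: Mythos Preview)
Your argument is correct and follows essentially the same route as the paper: both apply submodularity to the planes $l_1\vee l_3$ and $l_2\vee l_3$ to bound the rank of their meet by $2$, and then conclude that $p$ lies on $l_3$. The only cosmetic difference is that the paper phrases the final step as $l_3\vee p \le (l_1\vee l_3)\wedge(l_2\vee l_3)$, whereas you first identify that meet with $l_3$ and then place $p$ in it.
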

\begin{proof}
  By submodularity of the rank function we have
  \[r\left((l_1\vee l_3) \wedge (l_2\vee l_3)\right) \le \rk(l_1\vee
  l_3) + (l_2\vee l_3) - \rk(l_1\vee l_2 \vee l_3 )=
  3+3-4=2.\] Now $l_3 \vee p \le (l_1\vee l_3) \wedge (l_2\vee l_3)$
  and hence $p$ must lie on $l_3$.
\end{proof}

Probably the easiest way to prove that the (SMC) holds for rank 3 is
to proceed as follows. If a rank-3 matroid $M$ is not modular, then it has a
pair of disjoint lines. We consider two extensions $N_1$ and $N_2$  of $M$ such that
$N_1$ adds to the two lines a point of intersection and $N_2$ erects a
V\'amos-cube ($V_8$ in \cite{Oxley}) using the disjoint lines as base
points. By Proposition~\ref{prop:escher} the amalgam of $N_1$ and
$N_2$ cannot exist (see Figure~\ref{fig:poljakturzik}).

\begin{figure}
  \centering
 \includegraphics[width=.6\textwidth]{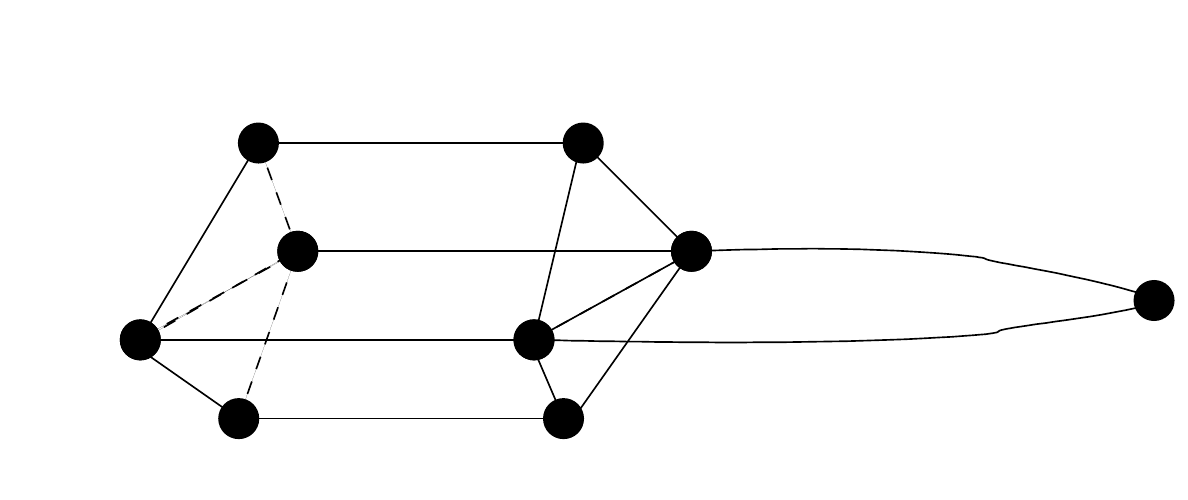}  
  \caption{This is not a matroid \label{fig:poljakturzik}}
\end{figure}

Bonin~\cite{BoninSticky} generalized this idea to the situation of a
disjoint line-hyperplane pair in {matroids of} arbitrary {rank}. We further
generalize this to a non-modular pair of a hyperplane $H$ and a flat
$F$  that can be made modular by a proper extension.  Our first aim is
to show that such a pair exists in any matroid  that is not OTE.
Again, the following is immediate:

\begin{prop}\label{modularePaareBleibenInErweiterungenGleich}
  Let $ M $ be a matroid, $ M' $ an extension of $ M $ and $ (X,Y)
  $ a modular pair of flats in $ M $. Then $
  (\cl_{M'}(X), \cl_{M'}(Y)) $ is a modular pair in $ M' $.  Moreover
\[\cl_{M'}(X) \cap \cl_{M'}(Y) = \cl_{M'}(X \cap Y).\]
\end{prop}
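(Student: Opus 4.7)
The plan is to reduce everything to rank equalities, using the fact that ranks of subsets of the original ground set $E$ are preserved under extension: $\rk_{M'}(S) = \rk(S)$ for every $S\subseteq E$, because $M'|E=M$. With this, the three quantities $\rk(X)$, $\rk(Y)$, $\rk(X\cup Y)$, $\rk(X\cap Y)$ from the modularity of $(X,Y)$ in $M$ can all be read off inside $M'$.

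First I would observe that $\rk_{M'}(\cl_{M'}(X))=\rk_{M'}(X)=\rk(X)$ and similarly for $Y$, and that $\cl_{M'}(X)\cup\cl_{M'}(Y)$ lies in and spans $\cl_{M'}(X\cup Y)$, giving $\rk_{M'}\bigl(\cl_{M'}(X)\cup\cl_{M'}(Y)\bigr)=\rk(X\cup Y)$. Then submodularity of the rank function in $M'$ yields
\[\rk_{M'}\bigl(\cl_{M'}(X)\cap\cl_{M'}(Y)\bigr) \le \rk(X)+\rk(Y)-\rk(X\cup Y),\]
and the modularity of $(X,Y)$ in $M$ rewrites the right-hand side as $\rk(X\cap Y)$. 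The reverse inequality comes from monotonicity applied to $X\cap Y\subseteq\cl_{M'}(X)\cap\cl_{M'}(Y)$ together with $\rk_{M'}(X\cap Y)=\rk(X\cap Y)$. Equality in the rank of the intersection gives exactly $\delta_{M'}\bigl(\cl_{M'}(X),\cl_{M'}(Y)\bigr)=0$.

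For the second assertion, both $\cl_{M'}(X\cap Y)$ and $\cl_{M'}(X)\cap\cl_{M'}(Y)$ are flats of $M'$, and the inclusion $\cl_{M'}(X\cap Y)\subseteq\cl_{M'}(X)\cap\cl_{M'}(Y)$ is immediate from monotonicity of closure. By the rank computation above, both flats have rank $\rk(X\cap Y)$, and two flats in inclusion with equal rank coincide. I do not anticipate an obstacle here; every step is a direct application of the standard properties of rank, closure and restriction, and the only substantive input is the hypothesis $\delta(X,Y)=0$, used exactly once to convert a submodular inequality into the required equality.
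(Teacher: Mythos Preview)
Your argument is correct and complete: the paper itself gives no proof of this proposition, stating only that it is immediate, and what you have written is precisely the natural verification one would expect. There is nothing to add or compare.
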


\begin{prop}\label{prop:intersectableModularCut}
  Let $M$ be a matroid, $\mathcal{M}$ a modular cut in $M$ and
  $M' = M+_{\mathcal{M}}p$ the corresponding single-element extension. If $M'$ does not
  contain a modular pair of flats $X'=X\cup p, Y'=Y \cup p$
  such that $X$ and $Y$ are a non-modular pair in $M$, then
\[\mathcal{M}':=\{\cl_{M'}(F) \mid F \in \mathcal{M}\}\]
is a modular cut {in $M'$}.
\end{prop}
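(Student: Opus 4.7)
The plan is to verify directly the two defining axioms of a modular cut for $\mathcal{M}'$, leaning on the standard description of the flats of the single-element extension $M' = M +_{\mathcal{M}} p$ supplied by Crapo's theorem: a flat of $M'$ contains $p$ if and only if it has the form $F \cup \{p\}$ with $F \in \mathcal{M}$, and for a flat $F$ of $M$ one has $\cl_{M'}(F) = F \cup \{p\}$ precisely when $F \in \mathcal{M}$. Thus $\mathcal{M}'$ is exactly the family of all flats of $M'$ that contain $p$, and every $F' \in \mathcal{M}'$ is uniquely of the form $F \cup \{p\}$ with $F \in \mathcal{M}$.

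Upward closure is then immediate: if $F' = F \cup \{p\} \in \mathcal{M}'$ and $G'$ is a flat of $M'$ containing $F'$, then $G'$ contains $p$, so $G' = G \cup \{p\}$ for some flat $G$ of $M$ with $G \in \mathcal{M}$; the inclusion $G \supseteq F$ together with the upward closure of $\mathcal{M}$ in $M$ (part of the hypothesis that $\mathcal{M}$ is a modular cut) forces $G \in \mathcal{M}$, hence $G' \in \mathcal{M}'$.

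For the modular-pair property I would take $X', Y' \in \mathcal{M}'$ that form a modular pair of flats in $M'$ and write $X' = X \cup \{p\}$, $Y' = Y \cup \{p\}$ with $X, Y \in \mathcal{M}$. The standing hypothesis of the proposition is precisely designed to exclude such a modular pair $(X', Y')$ arising out of a non-modular pair $(X, Y)$ in $M$, so $(X, Y)$ must itself be a modular pair in $M$. Since $X, Y \in \mathcal{M}$ and $\mathcal{M}$ is a modular cut of $M$, this forces $X \cap Y \in \mathcal{M}$, and therefore $X' \cap Y' = (X \cap Y) \cup \{p\} = \cl_{M'}(X \cap Y) \in \mathcal{M}'$, as required. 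The only delicate point is keeping the correspondence between flats of $M$ in $\mathcal{M}$ and flats of $M'$ containing $p$ consistent; there is no quantitative estimate to push through, and once Crapo's description of flats through $p$ is invoked, the two modular-cut axioms for $\mathcal{M}'$ follow by direct rewriting and a single appeal to the hypothesis of the proposition.
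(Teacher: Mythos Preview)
The paper gives no proof of this proposition; it is stated without argument, evidently regarded as a routine check. Your direct verification of the two modular-cut axioms is the natural way to supply the details, and the argument is essentially correct.

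One minor inaccuracy: the claim that ``a flat of $M'$ contains $p$ if and only if it has the form $F \cup \{p\}$ with $F \in \mathcal{M}$'', and hence that ``$\mathcal{M}'$ is exactly the family of all flats of $M'$ that contain $p$'', is not quite right. In Crapo's extension, $F \cup \{p\}$ is also a flat of $M'$ whenever $F$ is a flat of $M$ that is neither in $\mathcal{M}$ nor covered by any element of $\mathcal{M}$ (then $\rk'(F \cup p) = \rk(F)+1$); for instance, if $\mathcal{M}$ is the principal cut at the full ground set, then $\{p\}$ itself is a flat of $M'$ not in $\mathcal{M}'$. Fortunately this misstatement plays no role in your proof. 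For upward closure you only need that $G := G' \setminus \{p\}$ is a flat of $M$ whenever $G'$ is a flat of $M'$ containing $p$ (which is always true), together with $G \supseteq F \in \mathcal{M}$ and the upward closure of $\mathcal{M}$ in $M$---exactly the second argument you give. The modular-pair step is correct as written. So the proof stands once you drop the spurious ``if and only if'' and the sentence identifying $\mathcal{M}'$ with all flats through $p$.
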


\begin{lemma}\label{lemma:smallestModularDefect}
  Let $M_0$ be a matroid  that is not OTE and $(X,Y)$ be a non-modular
  pair of smallest modular defect $\delta := \delta(X,Y)$ such that
  there is a single-element extension decreasing their modular defect. 
  Then there exists a sequence $M_1,\ldots,M_\delta$ of matroids such
  that $M_{i}$ is a single-element extension of $M_{i-1}$ for $i=1,\ldots,
  \delta$ and
  $\delta_{M_i}\left(\cl_{M_i}(X),\cl_{M_i}(Y)\right)=\delta -i$. In
  particular $(\cl_{M_\delta}(X),\cl_{M_\delta}(Y))$ are a modular
  pair in $M_\delta$.
\end{lemma}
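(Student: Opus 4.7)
The plan is to build the $M_i$ by induction on $i$, simultaneously maintaining (for $0\le k\le\delta-1$) a modular cut $\mathcal{M}^{(k)}$ of $M_k$ with the following witnessing property: $\cl_{M_k}(X)$ and $\cl_{M_k}(Y)$ belong to $\mathcal{M}^{(k)}$, while $\cl_{M_k}(X)\cap\cl_{M_k}(Y)$ does not. Given such a cut, $M_{k+1}:=M_k+_{\mathcal{M}^{(k)}}p_{k+1}$ is Crapo's extension, and a direct rank calculation confirms $\delta_{M_{k+1}}(\cl_{M_{k+1}}(X),\cl_{M_{k+1}}(Y))=\delta-k-1$, so iterating delivers the desired sequence.

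For the base step take $\mathcal{M}^{(0)}:=\langle X,Y\rangle_{M_0}$; by Proposition~\ref{intersectablePairs} and the hypothesis that $(X,Y)$ admits a defect-decreasing extension, $X\cap Y\notin \mathcal{M}^{(0)}$.

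For the inductive step I set $\mathcal{M}^{(k+1)}:=\{\cl_{M_{k+1}}(F):F\in\mathcal{M}^{(k)}\}$. Unwinding the construction shows $\cl_{M_k}(F)=F\cup\{p_1,\dots,p_k\}$ for every $F\in\mathcal{M}^{(0)}$, so $\mathcal{M}^{(k)}=\{F\cup\{p_1,\dots,p_k\}:F\in\mathcal{M}^{(0)}\}$. The witnessing property is then immediate: $X,Y\in\mathcal{M}^{(0)}$ gives $\cl_{M_{k+1}}(X),\cl_{M_{k+1}}(Y)\in\mathcal{M}^{(k+1)}$, while the only $F\in\mathcal{M}^{(0)}$ whose $M_{k+1}$-closure could equal $(X\cap Y)\cup\{p_1,\dots,p_{k+1}\}$ would have to be $X\cap Y$, which is forbidden. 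That $\mathcal{M}^{(k+1)}$ is in fact a modular cut of $M_{k+1}$ will be concluded from Proposition~\ref{prop:intersectableModularCut} applied to $M_k\to M_{k+1}$.

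The heart of the proof, and the main obstacle, is verifying the hypothesis of Proposition~\ref{prop:intersectableModularCut} at each step. A hypothetical offending pair would be flats $X'',Y''$ in $M_k$ with $X'',Y''\in\mathcal{M}^{(k)}$, $X''\cap Y''\notin\mathcal{M}^{(k)}$, and $\delta_{M_k}(X'',Y'')=1$. Using the explicit description of $\mathcal{M}^{(k)}$ above, $\tilde X:=X''\cap E$ and $\tilde Y:=Y''\cap E$ lie in $\mathcal{M}^{(0)}$ while $\tilde X\cap\tilde Y\notin\mathcal{M}^{(0)}$, so $(\tilde X,\tilde Y)$ is intersectable in $M_0$. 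The $M_k$-ranks of $X''$, $Y''$ and $X''\cup Y''$ coincide with the $M_0$-ranks of the corresponding tildes, so
\[\delta_{M_0}(\tilde X,\tilde Y)=\rk_{M_k}(X''\cap Y'')-\rk_{M_0}(\tilde X\cap\tilde Y)+1\le k+1,\]
since $X''\cap Y''=(\tilde X\cap\tilde Y)\cup\{p_1,\dots,p_k\}$. Since the inductive step is only needed for $k\le\delta-2$, this produces an intersectable pair in $M_0$ of defect at most $\delta-1$, contradicting the minimality of $\delta$. Hence no offending pair exists, Proposition~\ref{prop:intersectableModularCut} applies, and the induction carries through to $M_\delta$ with defect $0$.
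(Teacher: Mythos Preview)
Your argument is correct and follows precisely the approach of the paper's (very terse) proof: you take the same modular cuts $\mathcal{M}^{(k)}=\{\cl_{M_k}(F):F\in\mathcal{M}\}$ and invoke Proposition~\ref{prop:intersectableModularCut} at each step, using the minimality of $\delta$ to rule out an offending pair. The paper compresses all of this into the phrase ``by the choice of $X$ and $Y$''; you have faithfully unpacked it, including the explicit description $\mathcal{M}^{(k)}=\{F\cup\{p_1,\dots,p_k\}:F\in\mathcal{M}^{(0)}\}$ and the pull-back $(\tilde X,\tilde Y)$ to $M_0$ with the defect bound $\le k+1$.
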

\begin{proof}
  Let $\mathcal{M}$ denote the modular cut generated by $X$ and $Y$ in $M_0$.
  Inductively we conclude, that by the choice of $X$ and $Y$
  \[\mathcal{M}_i:=\{\cl_{M_i}(F) \mid F \in \mathcal{M}\}\] is a
  modular cut in $M_i$ for $i=1,\ldots,\delta-1$ implying the
  assertion.
\end{proof}

\begin{lemma}\label{lem:minplusmax}
  Let $M$ be a matroid that is not OTE.  Then there exists an
  intersectable non-modular pair $(F,H)$ of smallest modular defect,
  where $F$ is a minimal element in the modular cut
  $\mathcal{M}_{F,H}$ generated by $H$ and $F$, and $H$ is a
  hyperplane of $M$.
\end{lemma}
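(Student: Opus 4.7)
The plan is to begin with an intersectable non-modular pair $(X,Y)$ of smallest modular defect $\delta^*$ (guaranteed by the hypothesis that $M$ is not OTE together with Proposition~\ref{intersectablePairs}) and then deform it in two stages to produce $(F,H)$ with the required properties. Throughout, let $\mathcal{M}:=\mathcal{M}_{X,Y}$, which is non-principal by intersectability.

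\textbf{Stage 1: enlarge $Y$ to a hyperplane.} I would iteratively extend $Y$ one element at a time, setting $Y':=\cl(Y\cup q)$, while maintaining both the modular defect and intersectability. Two cases arise according to whether $\rk(X\cup Y)<\rk(M)$ or $\rk(X\cup Y)=\rk(M)$: in the first I would pick $q\notin\cl(X\cup Y)$, and in the second $q\in X\setminus Y$ (possible since $(X,Y)$ is non-modular). A short rank computation gives $\delta(X,Y')=\delta(X,Y)$ in both cases. To verify intersectability I would examine the extension $M+_{\mathcal{M}}p$ and show $\delta_{M+p}(\cl_{M+p}(X),\cl_{M+p}(Y'))=\delta(X,Y')-1$, which reduces to checking that $X\cap Y'\notin\mathcal{M}$. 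In the first case this is immediate because $X\cap Y'=X\cap Y$. In the second case the key observation is that $(Y,X\cap Y')$ is a modular pair in $M$ (the ranks satisfy $\rk(Y)+\rk(X\cap Y')=\rk(Y')+\rk(X\cap Y)$), so if $X\cap Y'\in\mathcal{M}$ then the modular-cut property yields $X\cap Y=Y\cap(X\cap Y')\in\mathcal{M}$, contradicting the intersectability of $(X,Y)$. Iterating produces a pair $(X,H)$ with $H$ a hyperplane and defect $\delta^*$.

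\textbf{Stage 2: shrink $X$ to a minimal element.} Because $(X,H)$ is intersectable, $\mathcal{M}_{X,H}$ is non-principal, so it has minimal elements. I would select $F$ a minimal element of $\mathcal{M}_{X,H}$ with $F\not\subseteq H$. Such $F$ exists: otherwise every minimal element lies in $H$, so by upward closure $X\cap H\in\mathcal{M}_{X,H}$, which contradicts intersectability by Proposition~\ref{intersectablePairs}. The pair $(F,H)$ is non-modular (otherwise $F\cap H\in\mathcal{M}_{F,H}$, contradicting minimality of $F$), intersectable (again by Proposition~\ref{intersectablePairs}, since $F\cap H\notin\mathcal{M}_{F,H}\subseteq\mathcal{M}_{X,H}$), and satisfies $F$ minimal in $\mathcal{M}_{F,H}$ and $H$ a hyperplane.

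\textbf{Main obstacle.} The remaining task is to show $\delta(F,H)=\delta^*$. The inequality $\delta(F,H)\geq\delta^*$ is immediate from the definition of $\delta^*$. The reverse inequality is the technical heart of the argument and requires a structural analysis of the minimal element $F$: either $F=X$ (in which case there is nothing to prove) or $F$ arises as $A\cap B$ for some modular pair $(A,B)$ in $\mathcal{M}_{X,H}$. In the latter situation I expect the argument to combine submodularity with the minimality of $F$ and the condition $F\not\subseteq H$ to produce $\delta(F,H)\leq\delta(X,H)=\delta^*$; concretely, minimality forces the structural identity $X=\cl(F\cup(X\cap H))$ together with modularity of $(F,X\cap H)$, from which the defect equality follows. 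This lattice-theoretic balancing act between minimality of $F$ in the cut and the target defect is where I anticipate the main difficulty.
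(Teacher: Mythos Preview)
Your overall strategy---start from an intersectable pair of smallest defect and deform it---is the same as the paper's, but you run the two stages in the opposite order, and this creates a genuine gap.

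In Stage~1, Case~2, you claim that $(Y,X\cap Y')$ is a modular pair, equivalently that $\rk(X\cap Y')=\rk(X\cap Y)+1$. Only the inequality $\rk(X\cap Y')\ge\rk(X\cap Y)+1$ follows from submodularity (applied to $Y$ and $X\cap Y'$, using $(X\cap Y')\cap Y=X\cap Y$ and $\cl((X\cap Y')\cup Y)=Y'$); the reverse inequality need not hold. When it fails, your argument that $X\cap Y'\notin\mathcal{M}$ breaks down, and with it both the preservation of the defect and the intersectability of $(X,Y')$. Your ``main obstacle'' in Stage~2 is a second symptom of the same ordering problem: without knowing that $X$ is already minimal in the cut, controlling $\delta(F,H)$ afterwards requires extra work, and the proposed ``structural identity'' $X=\cl(F\cup(X\cap H))$ together with modularity of $(F,X\cap H)$ is neither evident nor the right tool---the correct inequality $\delta(F,H)\le\delta(X,H)$, for $F\subseteq X$ containing $X\cap H$, comes directly from submodularity applied to $X$ and $F\cup H$.

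The paper avoids both difficulties by an extremal choice rather than an iteration: among all intersectable pairs of defect $\delta_{\min}$, pick $(X,Y)$ with $\rk(X)$ minimal and, subject to that, $\rk(Y)$ maximal. Minimality of $\rk(X)$ is shown first: if some $F\subsetneq X$ lay in $\mathcal{M}_{X,Y}$, then necessarily $X\cap Y\subseteq F$, the pair $(F,Y)$ is intersectable, and submodularity gives $\delta(F,Y)\le\delta_{\min}$, contradicting the choice of $X$. With $X$ minimal in $\mathcal{M}_{X,Y}$ in hand, the argument that $Y$ is a hyperplane is clean: once $\cl(X\cup Y)=E(M)$, any $Y'=\cl(Y\cup p)$ with $p\in X\setminus Y$ satisfies $X\cap Y'\subsetneq X$, hence $X\cap Y'\notin\mathcal{M}_{X,Y}$ by minimality of $X$---no modularity claim about $(Y,X\cap Y')$ is needed. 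Reordering your two stages (shrink first, then enlarge) and packaging the choice extremally is precisely what makes the proof go through.
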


\begin{proof}
  Since $M$ is not OTE, it is not modular and hence of rank at least
  three.  Every non-modular pair of flats in a
    rank-3 matroid clearly satisfies the assertion. Hence we may assume 
    $\rk(M) \geq 4$.  Let $(X,Y)$ be a non-modular intersectable
  pair of flats in $M$ of smallest modular defect $\delta_{min}$ and
  chosen such that, {first}, $X$ is of minimal and, {second}, $Y$ of maximal
  rank. We claim that $F=X$ and $H=Y$ are as required. Let
  $\mathcal{M}_{X,Y}$ be the modular cut generated by these two flats.

   Assume, contrary to the first assertion, that there exists an $ F \in
   \mathcal{M}_{X,Y}$ with { $ F \subsetneq X $.
   Since the principal modular cut $\mathcal{M}_{X \cap Y}$ contains $X$ and $Y$, it is a superset
   of the modular cut $\mathcal{M}_{X,Y}$. Hence we obtain $X \cap Y \subseteq F$.
   Since $\mathcal{M}_{X,Y}$ contains $F$ and $Y$ but not $X \cap Y = F \cap Y$}, the pair $(F,Y)$ is non-modular and intersectable in $M$
   (according to Proposition~\ref{prop:intersectableModularCut}).  Due
   to submodularity of $\rk$ we have $\rk(X) + \rk(F \cup Y) \geq
   \rk(X \cup Y) + \rk(F) $ and hence:
\begin{align*}
\delta(F,Y) &= \rk(F) + \rk(Y) - \rk(F \cup Y) - \rk(F \cap Y)  \\
&\leq \rk(X) + \rk(Y) - \rk(X \cup Y) - \rk(X \cap Y) = \delta(X,Y) = \delta_{min},
\end{align*}
contradicting the choice of $X$.  Next we show that
  $\cl(X \cup Y) = E(M)$. Assume to the contrary that there exists $p
  \in E(M) \setminus \cl(X \cup Y) $ and let $Y_1 = \cl(Y \cup
p)$.  Then $X \cap Y = X \cap Y_1$ and hence $\delta(X,Y_1) =
\delta(X,Y)$. Since $\mathcal{M}_{X,Y_1} \subseteq \mathcal{M}_{X,Y}$,
the pair $(X,Y_1)$ remains intersectable, contradicting the choice of
$Y$, and hence verifying   $\cl(X \cup Y) = E(M)$. 
Finally, assume $Y$ is not a hyperplane.  Let $ Y' = \cl(Y \cup p) $
with $p \in X \setminus Y$.  Then
\begin{align*}
\delta(X,Y') &= \rk(X) + \rk(Y') - \rk(X \cup Y') - \rk(X \cap Y') \\
& =\rk(X) + \rk(Y) + 1 - \rk(X \cup Y) - \rk(X \cap Y) - 1 = \delta(X,Y). 
\end{align*}
Since $Y$ is not a hyperplane and $\cl(X \cup Y) = E(M)$, we must have
$X \cap Y' \subsetneq X$, and $X$ being minimal in $\mathcal{M}_{X,Y}$
implies $X \cap Y' \notin \mathcal{M}_{X,Y}$. Now $\mathcal{M}_{X,Y'}
\subseteq \mathcal{M}_{X,Y}$ yields that $X \cap Y' \not \in
\mathcal{M}_{X,Y'}$ and thus by Proposition
\ref{prop:intersectableModularCut} the pair $(X,Y')$ is intersectable
with $\delta(X,Y') = \delta(X,Y) = \delta_{min}$, contradicting the
choice of $Y$.
\end{proof}

Lemmas \ref{lemma:smallestModularDefect} and \ref{lem:minplusmax}
now imply the following:

\begin{theorem}\label{theorem:nonOTEmatroidHyperplaneExists}
  Let $M$ be a matroid  that is not OTE.  Then there
  exist
  \begin{enumerate}[(i)]
  \item a non-modular pair $(F,H)$ where $H$ is a hyperplane of $M$
  and 
\item an extension $N$ of $M$ such that $(\cl_N(F),\cl_N(H))$ is a
  modular pair in $N$.
  \end{enumerate}
\end{theorem}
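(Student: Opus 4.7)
The theorem is flagged by the authors as an immediate consequence of the two preceding lemmas, and my plan is simply to splice them together. First I would invoke Lemma~\ref{lem:minplusmax}: since $M$ is not OTE, it produces a pair $(F,H)$ that is non-modular, intersectable, of smallest modular defect among all intersectable non-modular pairs of flats of $M$, and such that $H$ is a hyperplane (and $F$ is minimal in the modular cut generated by $F$ and~$H$, though for the present theorem only the hyperplane property of $H$ matters). This already settles part~(i).

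For part~(ii) I would feed the same pair $(F,H)$ into Lemma~\ref{lemma:smallestModularDefect}. The hypotheses match exactly: Lemma~\ref{lem:minplusmax} guarantees that $(F,H)$ has the smallest modular defect among intersectable non-modular pairs, and by intersectability there is at least one single-element extension of $M$ strictly decreasing $\delta(F,H)$. Setting $\delta := \delta(F,H)$, Lemma~\ref{lemma:smallestModularDefect} then yields a chain
\[
M = M_0,\; M_1,\; \ldots,\; M_\delta
\]
of successive single-element extensions with $\delta_{M_i}(\cl_{M_i}(F),\cl_{M_i}(H)) = \delta - i$. Taking $N := M_\delta$ gives an extension of $M$ in which $(\cl_N(F),\cl_N(H))$ is a modular pair, which is exactly assertion~(ii).

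There is essentially no genuine obstacle in the argument — both lemmas were crafted with precisely this combination in mind. The only thing that has to be checked, and which I would state explicitly rather than leave implicit, is that the notion of ``smallest modular defect'' used in Lemma~\ref{lem:minplusmax} (minimum over all intersectable non-modular pairs of $M$) is exactly the hypothesis needed to apply Lemma~\ref{lemma:smallestModularDefect}, so that the inductive construction of the $M_i$ goes through without ever leaving the intersectable regime. Once that alignment is noted, the proof is a two-line citation of the lemmas.
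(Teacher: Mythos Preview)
Your proposal is correct and matches the paper's approach exactly: the paper does not even write out a proof but simply states that Lemmas~\ref{lemma:smallestModularDefect} and~\ref{lem:minplusmax} imply the theorem, and your two-step argument is precisely the intended splice. The one observation you make explicit --- that the ``smallest modular defect'' in Lemma~\ref{lem:minplusmax} is taken over intersectable pairs and therefore matches the hypothesis of Lemma~\ref{lemma:smallestModularDefect} --- is the only thing worth saying, and you say it.
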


On the other hand we also have:

\begin{theorem}\label{theorem:BoninGeneral}
  Let $M$ be a matroid and $(F,H)$ a non-modular pair of disjoint
  flats, where $H$ is a hyperplane of $M$. Then there exists an
  extension $N$ of $M$ such that for every extension $N'$ of $N$,
  $(\cl_{N'}(F),\cl_{N'}(H))$ is not a modular pair in $N'$.
\end{theorem}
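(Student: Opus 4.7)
The strategy is to construct $N$ as a Vámos-style extension that introduces forced coplanarity relations, so that Proposition~\ref{prop:escher} (the Escher obstruction) rules out the intersection $\cl_{N'}(F) \cap \cl_{N'}(H)$ from ever attaining the rank required for modularity in an arbitrary further extension $N'$.

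First, I would reformulate the modularity condition. Let $k = \rk(F)$ and $r = \rk(M)$. Since $F \cap H = \emptyset$ and $H$ is a hyperplane, $F \cup H$ spans $M$, so $\rk(F \cup H) = r$ and $\delta(F,H) = k - 1 \ge 1$. For any extension $N'$, the set $F \cup H$ is still a spanning set, hence $\rk_{N'}(\cl_{N'}(F) \cup \cl_{N'}(H)) = r$. Consequently, $(\cl_{N'}(F), \cl_{N'}(H))$ is modular in $N'$ if and only if $\rk_{N'}(\cl_{N'}(F) \cap \cl_{N'}(H)) = k - 1$. Thus it suffices to build $N$ so that in every extension $N' \supseteq N$, this intersection has rank at most $k - 2$.

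For the core construction I would follow the blueprint of the $k = 2$ case (Bonin's disjoint line/hyperplane setting, which in turn mimics the Poljak--Turz\'ik argument sketched in the introduction to Section~\ref{sec:extbonin}). Pick a line $l' \subseteq H$ and erect a single-element extension, specified by an appropriate principal modular cut, that forces $F$, $l'$, and a newly created third line to be pairwise coplanar while not lying in a common plane, i.e., an Escher configuration. In any $N' \supseteq N$, a putative point $p \in \cl_{N'}(F) \cap \cl_{N'}(H)$ would, by the inherited pairwise-coplanarity relations, be forced by Proposition~\ref{prop:escher} to land on the generically-placed auxiliary line, contradicting a design feature of $N$. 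For general $k$, I would iterate this device: choose a rank-$(k-1)$ flat $F' \subseteq H$ together with a matched family of lines connecting $F$ and $F'$, and perform $k-1$ such Vámos-style single-element extensions, one per dimension of potential intersection, arranged so that any rank-$(k-1)$ flat sitting simultaneously in $\cl_{N'}(F)$ and $\cl_{N'}(H)$ would contain a point triggering an Escher violation in one of the layers.

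The hard part is two-fold. First, one must verify that the iterated modular cuts defining $N$ really are modular cuts, i.e., that the Vámos-style coplanarity relations can be consistently imposed without collapsing rank; this needs careful inductive bookkeeping of the flats at each intermediate stage. Second, and more delicate, one must show that the Escher obstruction propagates to \emph{every} extension $N'$, not merely to single-element ones: coplanarity relations survive any extension, but one has to check that no $N'$ can introduce sufficient new rank structure to dodge Proposition~\ref{prop:escher}. This is precisely where the assumption that $H$ is a hyperplane is essential: it pins down $\rk_{N'}(\cl_{N'}(F) \cup \cl_{N'}(H)) = r$, leaving the only route to modularity through the intersection, which is exactly what the stacked Escher configurations block.
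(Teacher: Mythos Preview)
Your high-level strategy matches the paper's --- erect a V\'amos-type obstruction over $F$ and $H$ so that any further extension making $(\cl_{N'}(F),\cl_{N'}(H))$ modular violates submodularity --- but the construction you sketch has a real gap. A ``single-element extension specified by a principal modular cut'' cannot create the Escher configuration you describe: one new point neither produces a ``newly created third line'' nor imposes new coplanarities among existing lines. More importantly, you never increase the rank, and that step is essential. The paper proceeds as follows: add a set $A$ of $r-1-\rk(F)$ points freely to $H$ (so that $F\cup A$ has rank $r-1$); add a coloop $e$, raising the rank to $r+1$, and then a free point $f$; now $T_1=F\cup A\cup e$, $T_2=H\cup A\cup e$, $B_1=F\cup A\cup f$, $B_2=H\cup A\cup f$ are four hyperplanes of the rank-$(r+1)$ matroid with $\delta(T_1,T_2)=\delta(B_1,B_2)=k-1$; finally add $k-1$ points $P$ into $T_1\cap T_2$ and $k-1$ points $Q$ into $B_1\cap B_2$ to make both hyperplane pairs modular. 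This $N$ uses $r+k-1$ new elements and a rank increase, not a single element.

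The contradiction is also not literally via Proposition~\ref{prop:escher} (which handles only three lines) but a higher-rank analogue carried out by a direct rank computation: with $D_1=\cl_{N'}(A\cup P\cup e)$ and $D_2=\cl_{N'}(A\cup Q\cup f)$ one shows that a putative $\cl_{N'}(F)\cap\cl_{N'}(H)$ of rank $k-1$ forces $\rk_{N'}(D_1\cap D_2)\ge r-2$, while $\rk_{N'}(D_1\cup D_2)=r+1$ and $\rk_{N'}(D_1)=\rk_{N'}(D_2)=r-1$, contradicting submodularity. Your alternative architecture of ``$k-1$ stacked thin V\'amos layers'' is genuinely different from this single thick construction, and you supply no argument that the layers can be consistently imposed or that they jointly block every rank-$(k-1)$ intersection; that is exactly the ``hard part'' you flag, and it is not addressed. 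A minor slip: once $N$ has rank $r+1$, the set $F\cup H$ no longer spans $N'$; your defect computation survives because $\rk_{N'}(F\cup H)=r$ regardless, but the word ``spanning'' should go.
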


\begin{proof}
  We follow the idea from~\cite{BachemKernSticky} and Bonin's proof
  \cite{BoninSticky} and  erect a
  V\'amos-type matroid above $F$ and $H$.
  Clearly, $r := \rk_M(M) \ge 3$ and $ 2 \leq \rk_M(F) \leq r - 1 $.  We extend
  $M$ by first adding a set $A$ of $ r - 1 - \rk_M(F)$ elements freely
  to $H$. Next, we add, first,  a coloop $e$, and then an element $ f $
  freely to the resulting matroid, yielding an extension $N_0$ with
  groundset $E(M) \cup A \cup \{e,f\} $ and of rank $r+1$.
  Note, that $\cl_{N_0}(H) = H \cup A$. We consider the following
  sets:
  \begin{itemize}
  \item $ T_1 = F \cup A \cup e $
  \item $ T_2 = H \cup A \cup e $ 
  \item $ B_1 = F \cup A \cup f $ 
  \item $ B_2 = H \cup A \cup f$
  \end{itemize}
  Note that $(T_1,T_2)$, $(B_1,B_2)$ are non-modular pairs of
  hyperplanes of rank $r$ in $N_0$ with the same modular defect
  \[\delta(T_1,T_2)= 2r -(r+1) -(r-\rk_M(F))=
  \rk_M(F)-1=\delta(B_1,B_2).\] Any non-modular pair of hyperplanes in
  a matroid is intersectable because the modular cut generated by the
  two hyperplanes contains additionally only the groundset of the
  matroid and hence is non-principal (see Proposition
  \ref{intersectablePairs}).  In the corresponding single-element
  extension the modular defect of the hyperplane-pair decreases by
  one. If this defect is still non-zero these two hyperplanes remain
  intersectable.  Repeating this process until they
  become a modular pair, the modular defect of other
  hyperplane-pairs stays unaffected in these extensions.  This way, we
  obtain an extension $N$ of the matroid $N_0$ of rank $r+1$ with
  groundset $E(N_0) \cup P \cup Q$ where $P$ and $Q$ are independent
  sets of size $\rk_M(F) - 1$ such that $(\cl_N(T_1),\cl_N(T_2))$ and
  $(\cl_N(B_1),\cl_N(B_2))$ are modular pairs in $N$ and $P \subseteq
  \cl_N(T_1) \cap \cl_N(T_2)$ resp. $Q \subseteq \cl_N(B_1) \cap
  \cl_N(B_2)$.
We will show now that the matroid $N$
is as required.

Assume to the contrary that there exists  an extension $N'$ of $N$
such that 
 $(\cl_{N'}(F)$, $\cl_{N'}(H))$ is a modular pair.
 As $A \subseteq \cl_{N'}(H)$ and $A \cap \cl_{N'}(F) = \emptyset$ we compute
\begin{align}\label{eq:aligned1} 
&\rk_{N'}((\cl_{N'}(F) \cap \cl_{N'}(H)) \cup A) = \rk_{N'}(\cl_{N'}(F) \cap \cl_{N'}(H))) + |A| \nonumber\\
=&  \rk_{N'}(\cl_{N'}(F)) + |A| +  \rk_{N'}(\cl_{N'}(H)) - \rk_{N'}(\cl_{N'}(F) \cup \cl_{N'}(H))) \nonumber\\
= & \rk_{N'}(\cl_{N'}(F \cup A))  + \rk_{N'}(\cl_{N'}(H)) -  \rk_{N'}(\cl_{N'}(F \cup H)) \nonumber\\
=&  (r - 1) + (r - 1)  - r = r - 2.
\end{align}   
%
%
Let $D_1= \cl_{N'}(A \cup P\cup e) $ and $D_2= \cl_{N'}(A \cup Q \cup f) $. Proposition \ref{modularePaareBleibenInErweiterungenGleich} yields $\cl_{N'}(\cl_N(T_1)) \cap \cl_{N'}(\cl_N(T_2)) = \cl_{N'}(\cl_N(T_1) \cap \cl_N(T_2))$
  and it holds $\rk_{N'}(D_1) = \rk_{N'}(D_2) = r-1$. We obtain
 \begin{align}\label{eq:aligned2} 
& \rk_{N'}((\cl_{N'}(F) \cap \cl_{N'}(H)) \cup D_1) \le \rk_{N'}((\cl_{N'}(F \cup D_1) \cap \cl_{N'}((H \cup D_1)) \nonumber \\
 =& \rk_{N'}(\cl_{N'}(T_1) \cap \cl_{N'}(T_2)) = \rk_{N'}(\cl_N(T_1) \cap \cl_N(T_2)) = r - 1 = \rk_{N'}(D_1).
  \end{align}
 This implies $\cl_{N'}(F) \cap \cl_{N'}(H) \subseteq D_1$.
 Similarly, using $B_1$ and $B_2$ instead of $T_1$ and $T_2$, we get $\cl_{N'}(F) \cap \cl_{N'}(H) \subseteq D_2$ 
  and conclude $(\cl_{N'}(F) \cap \cl_{N'}(H))  \cup A \subseteq D_1 \cap D_2$. 
 This yields 
 \begin{equation}
   \label{eq:aligned3}
    \rk_{N'}(D_1 \cap D_2) \ge \rk_{N'}((\cl_{N'}(F) \cap \cl_{N'}(H))  \cup A) \stackrel{\eqref{eq:aligned1}}{=} r-2. 
 \end{equation}
From
 $\rk_{N'}(D_1 \cup D_2) = \rk_{N'}(A \cup P \cup Q \cup e \cup f)) = r+1$
  we finally obtain
 \[ \rk_{N'}(D_1) + \rk_{N'}(D_2) \stackrel{\eqref{eq:aligned2}}{=} 2r - 2 < (r+1) + (r-2) \\\stackrel{\eqref{eq:aligned3}}{\le} \rk_{N'}(D_1 \cup D_2) + \rk_{N'}(D_1 \cap D_2) \]
 contradicting submodularity. \qedhere

\end{proof}
Summarizing the two previous theorems yields the final result of this
section:
\begin{theorem}\label{theo:nine}
Let $ M $ be a matroid  that is not OTE.
Then $ M $ is not sticky.
\end{theorem}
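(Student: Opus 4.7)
The plan is to combine Theorems~\ref{theorem:nonOTEmatroidHyperplaneExists} and~\ref{theorem:BoninGeneral} to produce two extensions of $M$ with no common amalgam, which by definition contradicts stickiness. Since $M$ is not OTE, Theorem~\ref{theorem:nonOTEmatroidHyperplaneExists} supplies a non-modular pair $(F,H)$ in $M$ with $H$ a hyperplane, together with an extension $N_1$ of $M$ in which $(\cl_{N_1}(F),\cl_{N_1}(H))$ is a modular pair. Feeding the same pair $(F,H)$ into Theorem~\ref{theorem:BoninGeneral} delivers a second extension $N_2$ of $M$ with the property that no further extension of $N_2$ can turn $(\cl(F),\cl(H))$ into a modular pair.

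Now suppose for contradiction that an amalgam $A := A(N_1,N_2)$ exists. Since $A$ is an extension of $N_1$ and $(\cl_{N_1}(F),\cl_{N_1}(H))$ is modular in $N_1$, Proposition~\ref{modularePaareBleibenInErweiterungenGleich} lifts this modularity upward to $(\cl_A(F),\cl_A(H))$ in $A$. But $A$ is simultaneously an extension of $N_2$, so the conclusion of Theorem~\ref{theorem:BoninGeneral} forbids this very pair from being modular in $A$. The contradiction shows no amalgam of $N_1$ and $N_2$ exists, whence $M$ is not sticky.

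The main obstacle I anticipate is a mismatch between the hypotheses of the two input theorems: Theorem~\ref{theorem:BoninGeneral} asks for $F$ and $H$ to be \emph{disjoint}, while Theorem~\ref{theorem:nonOTEmatroidHyperplaneExists} only guarantees that $F$ is minimal in the modular cut $\mathcal{M}_{F,H}$. I would resolve this either by auditing the V\'amos-type erection in the proof of Theorem~\ref{theorem:BoninGeneral} to check that the only place disjointness is invoked is to ensure $A\cap \cl(F)=\emptyset$, a fact which already follows from $H\not\subseteq F$ (automatic because $(F,H)$ is non-modular with $H$ a hyperplane); or, failing that, by contracting the flat $F\cap H$ and transferring the whole argument to the contraction, where the images of $F$ and $H$ become disjoint flats of the same modular defect.
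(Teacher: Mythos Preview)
Your proposal is correct and matches the paper's argument almost exactly: the paper too invokes Theorem~\ref{theorem:nonOTEmatroidHyperplaneExists} for $N_1$, then Theorem~\ref{theorem:BoninGeneral} for $N_2$, and concludes that no amalgam can exist. The paper resolves the disjointness mismatch precisely via your option~(b), writing ``possibly contracting $F\cap H$, and referring to Lemma~7 of~\cite{BachemKernSticky}, we may assume that $F$ and $H$ are disjoint''; that lemma supplies the fact that non-stickiness of the contraction $M/(F\cap H)$ propagates back to $M$.

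Two small remarks. First, your option~(a) is shakier than you suggest: while $A\cap\cl_{N'}(F)=\emptyset$ does survive, the subsequent computations of $\delta(T_1,T_2)$, of $|P|,|Q|$, and especially of $\rk_{N'}(D_1\cup D_2)$ all shift when $F\cap H\neq\emptyset$, and the final submodularity contradiction does not go through without redefining $D_1,D_2$ to include $F\cap H$ and redoing the bookkeeping. The contraction route is genuinely cleaner. Second, Theorem~\ref{theorem:nonOTEmatroidHyperplaneExists} itself does not assert minimality of $F$ in $\mathcal{M}_{F,H}$; that is an intermediate fact from Lemma~\ref{lem:minplusmax} and is not needed here.
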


\begin{proof}
  By Theorem \ref{theorem:nonOTEmatroidHyperplaneExists}, $ M $ has a
  non-modular intersectable pair of flats $ (F,H) $ such that $ H $ is
  a hyperplane, and there exists an extension $N_1$ of $M$ such that
  $(\cl_{N_1}(F), \cl_{N_1}(H))$ is a modular pair.  Possibly
  contracting $(F \cap H)$, and referring to Lemma 7 of
  \cite{BachemKernSticky}, we may assume that $F$ and $H$ are
  disjoint.  Thus, by Theorem~\ref{theorem:BoninGeneral}, there also
  exists an extension $N_2$ of $M$ such that in every extension $N$ of
  $N_2$ the pair $(\cl_{N}(F), \cl_{N}(H))$ is not modular.  Hence $M$
  is not sticky.
\end{proof}

\section{Hypermodularity and OTE matroids}

We collect some facts about hypermodular matroids and OTE matroids
that we need for the proof of Theorem~\ref{theo:OTErank4sticky} and
the embedding theorems in the next section. Recall that a matroid is
hypermodular if any pair of hyperplanes intersects in a coline.
Modular matroids are hypermodular and hypermodular matroids of rank at
most $3$ must be modular. Thus, a contraction of a hypermodular
matroid of rank $ n $ by a flat of rank $ n-3 $ is a modular matroid
of rank $3$.  Every projective geometry $ P(n,q) $ is hypermodular and
remains hypermodular if we delete up to $ q - 3 $ of its points.  In
the following we will focus on the case of hypermodular matroids of
rank~$4$.

\begin{prop}\label{pro:HypModMatrEbenenLinien}
  Let $ M $ be a hypermodular rank-4 matroid. If $ M $ contains a
  disjoint line and hyperplane, then $ M $ also contains two
  disjoint coplanar lines. {The same holds for a modular cut in $M$.}
\end{prop}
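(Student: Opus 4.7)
The plan is to pull a second line out of the given hyperplane using hypermodularity. Let $\ell$ be the line and $H$ the hyperplane with $\ell \cap H = \emptyset$, and pick any point $q \in H$. Form the flat $P := \cl(\ell \cup q)$. Since $q \notin \ell$ (otherwise $q \in \ell \cap H$), $P$ has rank $3$ and so is itself a hyperplane of $M$.

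Next, I intersect $P$ with $H$. Hypermodularity forces $P \cap H$ to be a coline of $M$, hence a line $L$ (a coline has rank $\rk(M)-2 = 2$). The line $L$ cannot coincide with $\ell$, because $\ell \not\subseteq H$, and $\ell \cap L \subseteq \ell \cap H = \emptyset$. Thus $\ell$ and $L$ are two distinct disjoint lines, both contained in the plane $P$, proving the first assertion.

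For the modular-cut version, suppose $\ell, H \in \mathcal{M}$. Upward closure of $\mathcal{M}$ immediately gives $P \in \mathcal{M}$, since $P \supseteq \ell$. To conclude $L \in \mathcal{M}$, I verify that $(P,H)$ is a modular pair via the rank identity $\rk(P) + \rk(H) - \rk(P \cup H) - \rk(P \cap H) = 3 + 3 - 4 - 2 = 0$, where $\rk(P \cup H) = 4$ holds because $P \neq H$ (as $\ell \not\subseteq H$). The second axiom of a modular cut then yields $L = P \cap H \in \mathcal{M}$, and $\ell$, $L$ are the required disjoint coplanar lines inside $\mathcal{M}$.

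I do not anticipate a substantive obstacle. Once the plane $P$ is chosen by joining $\ell$ with a point of $H$, hypermodularity produces the second line directly, and disjointness is inherited from $\ell \cap H = \emptyset$. The only small bookkeeping — that $P$ has rank exactly $3$ and that $P \neq H$ — rests entirely on the hypothesis $\ell \cap H = \emptyset$.
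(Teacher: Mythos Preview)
Your proof is correct and follows essentially the same approach as the paper: take a point on the hyperplane, join it with the line to obtain a plane, and intersect that plane with the original hyperplane using hypermodularity to produce the second line. Your treatment is in fact more explicit than the paper's, particularly for the modular-cut clause, where the paper simply asserts $l_2 \in \mathcal{M}$ without spelling out (as you do) that $(P,H)$ is a modular pair and so the intersection lies in $\mathcal{M}$ by the second axiom.
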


\begin{proof}  
  Let $ (l_1,e_1) $ be a disjoint line-plane pair in $ M $. Take a
  point $ p$ in $e_1 $. Because of hypermodularity, the plane $ l_1
  \lor p $ intersects the plane $ e_1 $ in a line $ l_2 $ in $ M $.
  The lines $ l_1 $ and $ l_2 $ are coplanar and disjoint.  If now
  $l_1$ and $e_1$ are elements of a modular cut $\mathcal{M}$ in $M$
  then it holds also $l_2 \in \mathcal{M}$.
\end{proof}

The next results are matroidal versions of similar results of Klaus
Metsch {(see \cite{METSCH1989161})} for linear spaces.

\begin{lemma}\label{lemma:1}
  Let $M$ be a hypermodular matroid of rank $4$ on a groundset $E$.
  Let $l_1, l_2$ be two disjoint coplanar lines. Then $E$ can be
  partitioned into $l_1,l_2$ and lines that are coplanar with $l_1$
  and with $l_2$.
  The modular cut $\mathcal{M}$ generated by $l_1$ and
    $l_2$ always contains such a line-partition of $E$.
\end{lemma}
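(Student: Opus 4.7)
The plan is to build the partition of $E$ in two stages: first for points outside the plane $P := l_1 \vee l_2$, then for the remaining in-plane points.

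For each $p \in E \setminus P$, I would set $L(p) := (l_1 \vee p) \wedge (l_2 \vee p)$. Both planes lie in $\mathcal{M}$ as supersets of $l_1,l_2$, and since $p \notin P$ their join has rank $4$, so by hypermodularity they are a modular pair and $L(p)$ is a line in $\mathcal{M}$ — coplanar with $l_1$ via $l_1 \vee p$ and with $l_2$ via $l_2 \vee p$. Hypermodularity applied to $(l_i \vee p, P)$ yields $(l_i \vee p) \wedge P = l_i$, so $L(p) \wedge P = l_1 \wedge l_2 = \emptyset$: the line $L(p)$ avoids $P$ entirely. For $q \in L(p) \setminus P$ the rank-$3$ flats $l_i \vee q$ and $l_i \vee p$ coincide (both contain $l_i$ and $q$), hence $L(q) = L(p)$, and the lines $\{L(p)\}_{p \in E \setminus P}$ partition $E \setminus P$.

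For the in-plane stage, fix $p_0 \in E \setminus P$ (such a point exists because $\rk(M) = 4 > \rk(P)$) and for each $e \in P \setminus (l_1 \cup l_2)$ define $M_e := (e \vee L(p_0)) \wedge P$. The plane $e \vee L(p_0)$ lies in $\mathcal{M}$ as a superset of $L(p_0)$, and $P$ lies in $\mathcal{M}$; hypermodularity makes them a modular pair, so $M_e$ is a line in $\mathcal{M}$ through $e$ sitting inside $P$ — in particular coplanar with both $l_1$ and $l_2$. For any $f \in M_e$ the rank-$3$ flats $f \vee L(p_0)$ and $e \vee L(p_0)$ agree (both contain $L(p_0)$ and a common point off it), whence $M_f = M_e$. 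Together with $l_1$ and $l_2$ themselves, the $L(p)$'s and the $M_e$'s will provide the required partition of $E$, provided the classes $M_e$ are disjoint from $l_1 \cup l_2$.

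The main obstacle is establishing this disjointness. My approach is to prove, for $a \in l_j$, that $L(p_0) \not\subseteq \cl(\{a, e, p_0\})$: the plane $l_j \vee p_0$ is distinct from $\cl(\{a, e, p_0\})$ because $e \notin l_j \vee p_0$ (using $(l_j \vee p_0) \wedge P = l_j$ and $e \notin l_j$), so by hypermodularity their meet is a line, and as it contains both $a$ and $p_0$ it equals $a \vee p_0$; were $L(p_0) \subseteq \cl(\{a, e, p_0\})$ we would get $L(p_0) \subseteq a \vee p_0$, forcing $L(p_0) = a \vee p_0$ and so $a \in L(p_0) \cap l_j \subseteq L(p_0) \wedge P = \emptyset$, a contradiction. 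Hence $\cl(\{a, e, p_0\}) \wedge L(p_0) = \{p_0\}$, any $q \in L(p_0) \setminus \{p_0\}$ is independent of $\{a, e, p_0\}$, and so $\{a, e, p_0, q\}$ is a basis; in particular $a \notin e \vee L(p_0)$, whence $a \notin M_e$. This completes the construction and shows that the resulting line-partition lies in $\mathcal{M}$.
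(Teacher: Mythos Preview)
Your proof is correct and follows essentially the same two-stage construction as the paper: define $L(p)=(l_1\vee p)\wedge(l_2\vee p)$ for $p\notin P$, then pick one such line $L(p_0)$ and set $M_e=(e\vee L(p_0))\wedge P$ for the in-plane points. The only real difference is in the bookkeeping for disjointness. The paper packages all of the ``these lines cannot meet'' arguments into repeated applications of Proposition~\ref{prop:escher} (three pairwise coplanar lines not in a common plane; if two meet in a point, so does the third), which makes the verification that $l_p$ avoids $e$ and that the $l_r$'s avoid $l_1,l_2$ essentially one-liners. You instead unwind these facts by direct hypermodularity and rank computations---your final paragraph is precisely an ad hoc instance of the Escher argument for the triple $(l_j,M_e,L(p_0))$. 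Both routes are valid; the paper's is shorter once Proposition~\ref{prop:escher} is available, while yours is self-contained.
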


\begin{proof}
  We set $e=\cl(l_1 \cup l_2)$. Then $l_p:=(l_1 \vee p) \wedge (l_2
  \vee p)$ is a line for every $p \in E \setminus e$ and coplanar to
  $l_1$ and $l_2$. By Proposition~\ref{prop:escher} it must be
  disjoint from $l_1$ and $l_2$ {and from $e$}.  This together with
  Proposition~\ref{prop:escher} implies that for $q \in E \setminus e
  $ with $p \ne q$ we must have either $l_p \wedge l_q= 0$ or
  $l_p=l_q= p \vee q$.  We denote the set of lines constructed this
  way by $\Delta$.  Now we choose a line $l_{p^*} \in \Delta$ and for
  each $r \in e \setminus (l_1 \cup l_2)$ we get a line $l_r = e
  \wedge (l_{p^*} \vee r)$.  Let $\Sigma$ be the set of lines obtained
  in that way. It is clear that $\Sigma$ is a line partition of $e
  \setminus (l_1 \cup l_2)$.  Again, Proposition~\ref{prop:escher}
  implies that these lines must be pairwise disjoint and disjoint
  from $l_1,l_2, l_{p^*}$ and all lines $l_q \in \Delta$. Now, the set
  $\Gamma = l_1 \cup l_2 \cup \Sigma \cup \Delta$ is the desired set
  of lines partitioning $E$. Obviously, it holds $\Gamma \subseteq
  \mathcal{M}$. \qedhere
  
\end{proof}

A non-trivial and non-principal modular cut in a
  matroid always contains a non-modular pair of flats.
Proposition~\ref{pro:HypModMatrEbenenLinien} implies, that in a
  hypermodular rank-4 matroid it even must contain two disjoint
  coplanar lines. By Lemma~\ref{lemma:1} we, thus, get a set of
  pairwise disjoint lines that partition the ground set.
Moreover we have:
\begin{theorem}\label{theorem:2}
  \begin{enumerate}
  \item Under the assumptions of Lemma~\ref{lemma:1} the following two statements are equivalent: 
      \begin{enumerate}[(a)] 
      \item There exists a single-element extension $M'$ where $\cl_{M'}(l_1)$ and $\cl_{M'}(l_2)$ intersect. 
      \item The modular cut generated by $l_1$ and $l_2$ in $M$
        contains a set of pairwise coplanar lines,
        $l_1$ and $l_2$ among them, partitioning the
        groundset $E(M)$.
      \end{enumerate}
  \item If a single-element extension $M'$ as in (i) exists, then the
    restriction to $M$ of any  line in $M'$ is a line.
  \item If there is no single-element extension as in (i), the matroid
    $M$ contains two non-coplanar lines $l_3,l_4$ such that $l_i$ and
    $l_j$ are coplanar for all $i \in \{1,2\}$ and $j \in \{3,4\}$ and
    no three of them are coplanar, i.e.,\ it has the V\'amos matroid
    containing $l_1$ and $l_2$ as a restriction.
     \end{enumerate}
\end{theorem}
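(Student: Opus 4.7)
My plan is to handle the three assertions in the order (ii), (i), (iii), with (ii) feeding into (i)(a)$\Rightarrow$(b) and (i) feeding into (iii).

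For (ii), I use Crapo's description of the extension. A rank-$2$ flat of $M'$ avoiding $p$ is a rank-$2$ flat of $M$ since ranks agree on subsets of $E(M)$. A rank-$2$ flat through $p$ has the form $\cl_{M'}(F \cup \{p\})$ for some flat $F \in \mathcal{M}'$ of rank $1$ or $2$ in $M$, where $\mathcal{M}'$ is the modular cut of the extension. Rank $1$ is ruled out: if $F = \{q\}$ were in $\mathcal{M}'$, then $p$ would be parallel to $q$; but Lemma~\ref{lemma:1} furnishes another line $L \in \mathcal{M}'$ not containing $q$, and the parallelism would force $q \in \cl_{M'}(L) = L \cup \{p\}$, contradicting $q \notin L$. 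So $F$ is a line, and the restriction of the rank-$2$ flat is a line of $M$.

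For (i)(a)$\Rightarrow$(b), take $\Gamma$ to be the set of restrictions to $M$ of the lines of $M'$ through $p$: each $q \in E(M)$ determines the unique such line $\cl_{M'}(\{p,q\})$, so $\Gamma$ partitions $E(M)$; part (ii) ensures each element of $\Gamma$ is a line of $M$; and two lines $L, L' \in \Gamma$ satisfy $\rk_{M'}(L \cup L' \cup \{p\}) = 3$ (two lines of $M'$ through a common point), whence $L \cup L'$ has rank $3$ in $M$ and $L, L'$ are coplanar in $M$. The hypothesis (a) puts $l_1, l_2$ in $\mathcal{M}'$, hence in $\Gamma$. For (i)(b)$\Rightarrow$(a), I would build $M'$ explicitly by placing a new point $p$ on every $L \in \Gamma$: set $\rk_{M'}(S) = \rk_M(S)$ when $p \notin S$, $\rk_{M'}(\{p\}) = 1$, and $\rk_{M'}(S \cup \{p\}) = \rk_M\bigl(\bigcup\{L \in \Gamma : L \cap S \ne \emptyset\}\bigr)$ otherwise. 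Pairwise coplanarity of $\Gamma$ bounds the rank increment $\rk_{M'}(S \cup \{p\}) - \rk_M(S)$ by $1$, and together with hypermodularity of $M$ it reduces submodularity of $\rk_{M'}$ to submodularity of $\rk_M$ on the corresponding unions of $\Gamma$-lines. By construction $p \in \cl_{M'}(l_1) \cap \cl_{M'}(l_2)$, giving (a).

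For (iii), I argue by contrapositive of (b)$\Rightarrow$(a). If no extension as in (i)(a) exists, then (b) must fail, so the Lemma~\ref{lemma:1}-partition $\Gamma = \{l_1, l_2\} \cup \Sigma \cup \Delta$ is not pairwise coplanar. Every pair inside $\Sigma \cup \{l_1, l_2\}$ lies in the plane $e = \cl(l_1 \cup l_2)$ and is coplanar there, and by Proposition~\ref{prop:escher} every $l_p \in \Delta$ is coplanar with both $l_1$ and $l_2$, so the offending non-coplanar pair must sit inside $\Delta$. Choose $l_3, l_4 \in \Delta$ non-coplanar. Then every pair $(l_i, l_j)$ with $i \in \{1,2\}, j \in \{3,4\}$ is coplanar, while no triple is: triples containing $\{l_1, l_2\}$ fail because $l_3, l_4 \notin e$, and triples containing $\{l_3, l_4\}$ fail because $\rk(l_3 \cup l_4) = 4$ already. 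This is exactly the Vámos configuration.

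The principal obstacle is the construction in (b)$\Rightarrow$(a): verifying submodularity of $\rk_{M'}$ when $p$ belongs to exactly one of the two subsets requires carefully tracking how $\bigcup\{L \in \Gamma : L \cap S \ne \emptyset\}$ interacts with unions and intersections of subsets $S$, and the pairwise coplanarity of $\Gamma$ combined with hypermodularity of $M$ is precisely what supplies the needed bounds on the resulting rank computations.
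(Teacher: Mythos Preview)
Your treatment of (ii) is essentially correct. In (i)(a)$\Rightarrow$(b) you construct $\Gamma$ as the restrictions of the $M'$-lines through $p$; this gives a pairwise-coplanar partition containing $l_1,l_2$, but the lines lie a priori only in the modular cut $\mathcal{M}'$ of the given extension, whereas (b) asks for lines in the \emph{smallest} modular cut $\mathcal{M}$ containing $l_1,l_2$. The paper sidesteps this by using the Lemma~\ref{lemma:1} partition directly, which already lies inside $\mathcal{M}$, and then observing that its members pairwise meet at $p$ in $M'$ and so are coplanar. For (b)$\Rightarrow$(a) the paper simply checks that the upward closure of $\Gamma$ in the flat lattice is a modular cut and invokes Crapo; your explicit rank-function construction could in principle be made to work, but the verification of submodularity that you flag as the main obstacle is a real piece of work that the modular-cut route bypasses entirely.

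The substantial gap is in (iii). Having ruled out non-coplanar pairs inside $\Sigma\cup\{l_1,l_2\}$ (all lie in $e$) and inside $\{l_1,l_2\}\times\Delta$ (by the construction of $\Delta$), you conclude that the offending pair must sit in $\Delta\times\Delta$. But you have not addressed pairs in $\Sigma\times\Delta$: a line $l_r\in\Sigma$ is coplanar with the particular $l_{p^*}\in\Delta$ used to build $\Sigma$, yet there is no reason it should be coplanar with an arbitrary $l_q\in\Delta$. The paper devotes most of its proof of (iii) to exactly this case, showing that when $l_3\in\Sigma$ and $l_4\in\Delta$ are non-coplanar one can replace $l_3$ by a suitable member of $\Delta$ --- either $l_{p^*}$ itself, or a further line $l_3''$ produced via the auxiliary line $(l_4\vee r)\wedge(l_{p^*}\vee r)$ --- so as to obtain the desired V\'amos configuration with both new lines coplanar with $l_1$ and $l_2$. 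Without this reduction your argument for (iii) is incomplete.
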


\begin{proof}
  (i) By Lemma \ref{lemma:1} the modular cut $\mathcal{M}$ generated
  by $l_1$ and $l_2$ contains a set of lines partitioning the
  groundset $E$.  Since any two of these lines intersect in the
  extension $M'$ in the new point, they must be coplanar.

  On the other hand, if we have a set $\Gamma$ of pairwise coplanar
  lines partitioning the groundset $E$, $l_1$ and $l_2$ among them,
  these lines must form the minimal elements of a modular cut. This is
  seen as follows.  Consider the set $\mathcal{M}$ of flats in $M$
  which are elements or supersets of elements of $\Gamma$.  Any two
  lines of $\mathcal{M}$ are disjoint and coplanar, hence they do not
  form a modular pair.  For $p\in E$ let $l_p$ denote the line in
  $\Gamma$ containing $p$ and let $h \in \mathcal{M}$ be a hyperplane
  containing $p$. Then $h$ contains $l_p$ or some other line $l$ that
  is coplanar with $l_p$.  Since in the second case $l_p \le l \vee p
  \le h$ we always have $l_p \le h$.  Let $h_1 \ne h_2$ be two
  hyperplanes in $\mathcal{M}$, let $l=h_1 \wedge h_2$ and $p \ne q$
  be two points on $l$.  Then $l_p \le h_i$ and $l_q \le h_i$ for $i
  \in \{1,2\}$ implying $l_p=l_q=l$.  Finally, consider a hyperplane
  $h$ and a line $l$. If they are a modular pair then they must
  intersect in a point $r$, hence $l=l_r$ and $l \le h$.
    Thus $ \mathcal{M} $ is a modular cut
    defining a single-element extension where $l_1$ and $l_2$ intersect.
  
(ii) Let $p$ denote the new point and $l$ a line containing $p$.
    Let $q$ be another point on $l$. {Then $q$ is contained in a line $l_q$ in $M$ of the 
    partition of $E(M)$ in lines. In $M'$ we obtain $\{p,q\} \subseteq \cl_{M'}(l_q)$. 
    Since $\{p,q\} \subseteq l$ we obtain $\cl_{M'}(l_q) = l$ hence 
     the restriction of $l$ to
    $M$ is the line $l_q$.}

  (iii) Let $\Gamma = l_1 \cup l_2 \cup \Sigma \cup \Delta$ be the
    line-partition of the groundset $E$ from the proof of Lemma
    \ref{lemma:1}. By (i) there exist $l_3$ and $l_4$ in
 {$\Gamma \setminus \{l_1,l_2\}$ that} are not
  coplanar and hence $l_3 \cup l_4 \not\subseteq \cl(l_1
  \cup l_2) = e$. {If $l_3,l_4 \in \Delta$ we are done
    hence we may assume} that $l_3=l_r \in \Sigma$ and $l_4=l_q \in
  \Delta$ where $l_q = (l_1 \vee q) \wedge (l_2 \vee q)$ and $l_r = e
  \wedge (l_{p^*} \vee r)$, as in the proof of Lemma \ref{lemma:1}.
  Since $l_{p^\ast}$ and $l_3$ are coplanar we conclude $l_{p^\ast}
  \ne l_4$. If $l_{p^\ast}$ and $l_4$ are not coplanar, we replace
  $l_3$ by $l_{p^\ast}$ and are done. Hence we may assume that they
  are coplanar.  The hyperplanes $l_4 \vee r$ and $ l_{p^\ast} \vee r$
  intersect in the line $l_3'=(l_4 \vee r) \wedge (l_{p^\ast} \vee
  r)$.  Assuming $l_3' \le e$ yields $l_3'=(l_4 \vee r) \wedge
  (l_{p^\ast} \vee r)\wedge(l_1 \vee l_2)=l_r=l_3$, contradicting
  $l_3$ and $l_4$ being not coplanar. Hence $l_3'$ intersects $e$ only
  in $r$.  Furthermore, by Proposition~\ref{prop:escher}, $l_3'$ must
  be disjoint from $l_{p^\ast}$ and $l_4$. Choose $p'$ on $l_3'$ but
  not on $e$ and define $l_3'':=l_{p'} \in \Delta$. We claim that
  $l_{p'}$ must be noncoplanar with at least one of $l_{p^\ast}$ or
  $l_4$. Otherwise, we would have
\[l_3''=(l_{p^\ast} \vee l_{p'}) \wedge (l_{4} \vee l_{p'})=
(l_{p^\ast} \vee p') \wedge (l_{4} \vee p')=(l_{p^\ast} \vee l_3')
\wedge (l_{4} \vee l_3')=l_3'\] which is impossible since $l_3''\in
\Delta$ is disjoint from $e$.   \qedhere
\end{proof}

The absence of a configuration in Theorem~\ref{theorem:2} (iii) is
called bundle condition in the literature.

\begin{definition}\label{def:BundleCondition}
  A matroid $ M $ of rank at least $4$ satisfies the {\em bundle
    condition} if for any four disjoint lines $ l_1, l_2,l_3, l_4 $ of $
  M $, no three of them coplanar, the following holds: If five of
  the six pairs $ (l_i,l_j) $ are coplanar, then all pairs are
  coplanar.
\end{definition}

Since a non-modular pair of hyperplanes together with the entire groundset
always forms a modular cut that is not principal, OTE matroids must be hypermodular. 
Hence, Theorem~\ref{theorem:2} has the following corollary:

\begin{corollary}\label{lem:Rang4HFMBCIstModular}
Let $ M $ be an OTE matroid of rank $4$. If the bundle-condition in $ M $ holds, then $ M $ is modular.
\end{corollary}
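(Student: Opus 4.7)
The plan is to assume that $M$ is not modular and derive a contradiction to the bundle condition. Since $M$ is OTE it is hypermodular, so every pair of hyperplanes (rank-$3$ flats) meets in a line. A rank-by-rank inspection of the possible rank-pairs of two flats then shows that in a hypermodular rank-$4$ matroid the remaining obstructions to modularity are a pair of disjoint coplanar lines, or a disjoint line--hyperplane pair. Proposition \ref{pro:HypModMatrEbenenLinien} reduces the latter to the former, so it suffices to show that $M$ contains no two disjoint coplanar lines.

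Suppose, for contradiction, that $l_1$ and $l_2$ are disjoint coplanar lines of $M$. The modular cut $\mathcal{M}$ generated by $\{l_1,l_2\}$ is nonempty, hence by the OTE hypothesis it is a principal cut $\mathcal{M}_F$. Since $l_1, l_2 \in \mathcal{M}_F$ forces $F \subseteq l_1 \cap l_2 = \emptyset$, we conclude $\mathcal{M} = \mathcal{M}_{l_1 \cap l_2}$. By Proposition \ref{intersectablePairs} the pair $(l_1,l_2)$ is therefore not intersectable: no single-element extension of $M$ makes $\cl_{M'}(l_1)$ and $\cl_{M'}(l_2)$ meet.

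Now Theorem \ref{theorem:2}(iii) applies, providing two further lines $l_3, l_4$ of $M$ which are not coplanar with each other, each coplanar with $l_1$ and with $l_2$, and with no three of $l_1, l_2, l_3, l_4$ coplanar. Because $l_3, l_4$ are taken from the line-partition $\Gamma$ of $E(M)$ constructed in Lemma \ref{lemma:1}, the four lines are pairwise disjoint. Five of the six pairs among them are coplanar---only $(l_3,l_4)$ is not---so the bundle condition forces $(l_3,l_4)$ to be coplanar as well, contradicting Theorem \ref{theorem:2}(iii). Hence $M$ has no pair of disjoint coplanar lines, and is therefore modular.

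The main obstacle I anticipate is the rank-by-rank reduction in the first paragraph: one must verify that hypermodularity together with the absence of disjoint coplanar lines eliminates every potential non-modular flat-pair in rank $4$, invoking Proposition \ref{pro:HypModMatrEbenenLinien} to dispose of the line--hyperplane case. Once this is in hand, the remaining paragraphs are direct applications of the machinery already developed.
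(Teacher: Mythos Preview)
Your proof is correct and follows essentially the same route as the paper's: assume $M$ is not modular, use hypermodularity and Proposition~\ref{pro:HypModMatrEbenenLinien} to produce two disjoint coplanar lines, and then invoke Theorem~\ref{theorem:2}(iii) to obtain a V\'amos configuration violating the bundle condition. You are simply more explicit than the paper about two points it leaves implicit: the rank-by-rank check that disjoint coplanar lines and disjoint line--plane pairs are the only non-modular flat-pairs in a hypermodular rank-$4$ matroid, and the verification (via OTE and Proposition~\ref{intersectablePairs}) that the hypothesis of Theorem~\ref{theorem:2}(iii) is met.
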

\begin {proof} Let $M$ be a rank-4 OTE-matroid that is not modular.
  Then, because $M$ is hypermodular and because of Proposition
  \ref{pro:HypModMatrEbenenLinien} it contains two disjoint coplanar
  lines.  From Theorem~\ref{theorem:2} (iii) follows that the
  bundle-condition does not hold in $M$.  
  \end{proof}

\section{Embedding Theorems}\label{sec:embed}

With these results, we can prove a first embedding theorem. Assertion
(iii) is a result of Kahn~\cite{Kahn1980}.

\begin{theorem}\label{satz:ErsterEinbettungssatz}
  Let $ M $ be a hypermodular rank-4 matroid with a finite or countably
  infinite groundset. Then $ M $ is embeddable in an OTE matroid $
  \overline{M}$ of rank 4 where the restriction of any line of $\overline{M}$ is a
  line in $ M $. Furthermore:
\begin{enumerate}
\item $ \overline{M} $ is finite if and only if $ M $ is finite.
\item The simplification of $\overline{M}/p $ is isomorphic to the
  simplification of $ M/p$ for every $ p \in E(M) $.
\item If $ M $ fulfills the bundle-condition then $ \overline{M} $ is modular.
\end{enumerate}
\end{theorem}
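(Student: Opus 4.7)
The plan is to construct $\overline{M}$ as the union of a chain of single-element extensions $M=M_0\subseteq M_1\subseteq \cdots$, each produced via Theorem~\ref{theorem:2}(i). The invariant maintained along the chain is that every $M_i$ is rank-$4$ hypermodular and that every line of $M_i$ restricts to a line of $M$.

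\medskip

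Inductive step: given $M_i$ satisfying the invariant, if $M_i$ is OTE I stop and set $\overline{M}=M_i$. Otherwise $M_i$ has a non-empty, non-trivial, non-principal modular cut. Since $M_i$ is hypermodular, Proposition~\ref{pro:HypModMatrEbenenLinien} together with the remark following Lemma~\ref{lemma:1} produces two disjoint coplanar lines inside this cut, and Theorem~\ref{theorem:2}(i) then yields a line-partition $\Gamma_i$ of $E(M_i)$ contained in a modular cut $\mathcal{M}_i$. Set $M_{i+1}=M_i+_{\mathcal{M}_i}p_{i+1}$. By Theorem~\ref{theorem:2}(ii), every line of $M_{i+1}$ restricts to a line of $M_i$, preserving the line-restriction invariant. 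The remaining and most delicate step is verifying that $M_{i+1}$ is hypermodular: one classifies the hyperplanes of $M_{i+1}$ (hyperplanes of $M_i$ in $\mathcal{M}_i$ extended by $p_{i+1}$; certain lower-rank flats extended by $p_{i+1}$; and hyperplanes of $M_i$ not in $\mathcal{M}_i$) and uses the hypermodularity of $M_i$ together with the coplanar line-partition structure of $\mathcal{M}_i$ to show that any two such hyperplanes intersect in a coline. This case analysis is the main technical obstacle.

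\medskip

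For the countably infinite case a standard bookkeeping/diagonal argument is needed: since each $M_i$ is countable, the collection of non-principal modular cuts arising across the stages is countable, and one schedules the choices so that every such cut is eventually processed. The union $\overline{M}=\bigcup_i M_i$ is then OTE, since any non-principal modular cut of $\overline{M}$ is generated by finitely many flats and is therefore already present in some $M_i$, contradicting the enumeration. For claim (i), termination for finite $M$ follows from the fact that the line-restriction property prevents the indefinite creation of non-principal modular cuts unrelated to the finite line-structure of $M$, giving a uniform bound on the length of the chain.

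\medskip

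Claim (ii) follows directly from the line-restriction property: lines through a fixed $p\in E(M)$ in $\overline{M}$ biject (after simplification) with lines through $p$ in $M$, so $\overline{M}/p$ and $M/p$ have isomorphic simplifications. For claim (iii), any four disjoint lines in $\overline{M}$ five of whose six pairs are coplanar restrict, by line-restriction, to four disjoint lines of $M$ with the same property; the bundle condition in $M$ supplies the missing coplanar pair, and this lifts back to $\overline{M}$. Hence $\overline{M}$ satisfies the bundle condition whenever $M$ does, and being an OTE rank-$4$ matroid satisfying the bundle condition, $\overline{M}$ is modular by Corollary~\ref{lem:Rang4HFMBCIstModular}.
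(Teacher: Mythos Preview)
Your overall strategy---build a chain of single-element extensions and take the union---matches the paper, but you miss the organizational idea that makes everything clean. The paper does \emph{not} pick, at each stage, an arbitrary non-principal modular cut of the current $M_i$. Instead it fixes once and for all the list $P$ of all disjoint coplanar line-pairs \emph{of the original matroid $M$}, and at step $i$ processes the $i$-th pair on that list, extending $M_{i-1}$ by the modular cut this pair generates there (or doing nothing if the pair is no longer intersectable). This works precisely because of the line-restriction invariant: any disjoint coplanar line-pair occurring in any $M_i$ or in $\overline{M}$ restricts to such a pair in $M$ and hence already sits on $P$. This single change removes the need for your diagonal bookkeeping, makes the OTE-verification a two-line argument (a non-principal cut in $\overline{M}$ contains a disjoint coplanar pair; its restriction is on $P$ at some index $i$; step $i$ either intersected them or showed their generated cut is trivial, and either way one reaches a contradiction), and makes (i) immediate: if $M$ is finite then $P$ is finite and the chain stops. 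Your own argument for (i) is too vague to count as a proof.

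You also substantially overestimate the difficulty of preserving hypermodularity. No case analysis on hyperplane types is needed. Once you know every line of $M_{i+1}$ restricts to a line of $M_i$, it follows that every plane of $M_{i+1}$ restricts to a plane of $M_i$: the restriction is a flat of $M_i$, and it cannot have rank at most $2$ since its closure in $M_{i+1}$ would then have rank at most $2$. Two distinct planes of $M_{i+1}$ therefore restrict to two distinct planes of $M_i$, which meet in a line by the inductive hypermodularity of $M_i$, and that line lies in both original planes. What you flag as ``the main technical obstacle'' is in fact a one-line consequence of the invariant you already maintain. Your treatment of (iii), where you make explicit that the bundle condition transfers from $M$ to $\overline{M}$ via line restriction before invoking Corollary~\ref{lem:Rang4HFMBCIstModular}, is correct and in fact spells out a step the paper leaves implicit.
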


\begin{proof} Let $P$ be a list of all disjoint coplanar pairs of
  lines of $M$. Clearly, $P$ is finite or countably infinite. We
  inductively define a chain of {matroids $ M = M_0, M_1, M_2 \hdots
    $} as follows: Let $ M_0 = M $, suppose $ M_{i-1} $ has already
  been defined for an $ i \in \mathbb{N} $.
  Let $ l_{i1} $ and $ l_{i2} $ denote the pair of disjoint lines in
  the list at index $i$. If $ l_{i1} $ and $ l_{i2} $ are not
  intersectable in the matroid $ M_{i-1} $, set $ M_i = M_{i-1} $.
  Otherwise, let $ M_i $ be the single-element extension of $ M_{i-1}
  $ corresponding to the modular cut $\mathcal{M}_{i-1}$ generated by
  $ l_{i1} $ and $ l_{i2} $ in $ M_{i-1} $.

  By Theorem~\ref{theorem:2} (ii), the restriction of a line in $
  M_{i+1} $ is a line in $ M_i $ and hence is also a line in $ M $.
  As a consequence also the restriction of a plane in $M_{i+1} $ is a
  plane in $ M $ hence two planes in $M_{i+1}$ intersect in a line.
  Thus all matroids $ M_i $ are hypermodular of rank $4$.  Now let $
  \overline{M} $ be the set system $
  (\overline{E},\overline{\mathcal{I}}) $ where $
  \overline{\mathcal{I}} \subseteq \mathcal{P}(\overline{E}) $, $
  \overline{E} = \bigcup_{i=0}^{\infty}(E(M_i)) $ and $ I \in
  \overline{\mathcal{I}} $ if and only if $ I $ is independent in some
  $ M_i $.  Clearly, $\overline{\mathcal{I}}$ satisfies the
  independence axioms of matroid theory.  We call $ \overline{M} $ the
  \textit{union of the chain of matroids}.  {The matroid
  }$\overline{M} $ is hypermodular of rank $4$ and has no new lines as
  well.

  Assume there were a modular cut $ \overline{\mathcal{M}} $ in $
  \overline{M} $ that is not principal.  By
  Proposition~\ref{pro:HypModMatrEbenenLinien} it contains a pair of
  disjoint coplanar lines. The restriction of this pair in $ M $ is on
  the list, say with index $ i $.  The modular cut $\mathcal{M}_{i-1}$
  generated by these two lines in $ M_{i-1} $ must contain $
  \cl_{M_{i-1}}(\emptyset) $, otherwise the lines would intersect in $
  M_i $, hence also in $ \overline{M} $.  Since $\{
  \cl_{\overline{M}}(X) | X \in \mathcal{M}_i\} \subseteq
  \overline{\mathcal{M}}$ we also must have $
  \cl_{\overline{M}}(\emptyset) \in \overline{\mathcal{M}}$, a
  contradiction to $\overline{\mathcal{M}}$ not being principal. Thus,
  $\overline{M}$ is OTE.  If $M$ is finite, so is the list $P$ and
  hence $\overline{M}$ proving (i).

It suffices to show that {for every point $p \in E(M)$ every point $q
  \in E(\overline{M}) - (E(M)\cup p)$} is parallel to a point in
$M/p$. As the restriction of the line spanned by $p$ and $q$ in
$\overline{M}$ is a line in $M$ it contains a point different from $p$
and (ii) follows. Finally, (iii) is
Corollary~\ref{lem:Rang4HFMBCIstModular}. \qedhere
\end{proof}

This embedding theorem has the following corollary:
\begin{corollary}\label{corollar:KantorRang4}
  Kantor's conjecture is reducable to the rank-4 case.
\end{corollary}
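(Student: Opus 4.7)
I would proceed by induction on the rank $n$ of a finite hypermodular matroid $M$. For $n\le 3$ a hypermodular matroid is automatically modular (as noted in the text preceding Proposition \ref{pro:HypModMatrEbenenLinien}), so the conjecture is trivial; for $n=4$ we are precisely in the hypothesized rank-$4$ case. So suppose $n\ge 5$ and that Kantor's conjecture has been established in every rank strictly below $n$.

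The key inductive reduction is via contraction by a point. If $p\in E(M)$ and $H_{1},H_{2}$ are two hyperplanes of $M/p$, then they lift to two hyperplanes $\tilde H_{1},\tilde H_{2}$ of $M$ both containing $p$. By hypermodularity of $M$ the pair $(\tilde H_{1},\tilde H_{2})$ is modular, so $\tilde H_{1}\cap \tilde H_{2}$ is a coline of $M$ through $p$, which contracts to a coline of $M/p$. Hence $M/p$ is hypermodular of rank $n-1$ and, by the inductive hypothesis, embeds into a modular matroid $N_{p}$. Since modular matroids are direct products of projective spaces, each $N_{p}$ carries a very rigid intersection structure which dictates, for each non-modular pair of flats of $M$ that meets $p$, where the missing intersection point ought to be placed.

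The principal obstacle, and the heart of the argument, is to assemble the family $\{N_{p}\}_{p\in E(M)}$ of rank-$(n-1)$ modular embeddings of the contractions into a single rank-$n$ modular extension of $M$. Imitating the construction in the proof of Theorem \ref{satz:ErsterEinbettungssatz}, I would enumerate the non-modular pairs of flats of $M$ and perform successive single-element extensions that realize the intersections required by the $N_{p}$'s. What must be verified at each step is that the prescribed family of flats really forms a modular cut, and that this can be done compatibly for all $p$ at once. Here Proposition \ref{modularePaareBleibenInErweiterungenGleich} ensures that modular pairs of flats are preserved under extension, while hypermodularity of $M$ together with the uniqueness-of-intersection built into the modular matroids $N_{p}$ forces the points prescribed by different contractions to coincide. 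Finiteness of $M$ guarantees termination, and the resulting extension is, by construction, modular (every pair of flats now meets in a flat of the correct rank), thereby completing the induction and establishing the corollary.
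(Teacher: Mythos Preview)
Your inductive set-up is correct up to the point where each contraction $M/p$ embeds into a modular matroid $N_p$, but the step you yourself call ``the principal obstacle'' is a genuine gap, not a routine verification. Gluing the family $\{N_p\}$ into a single modular extension of $M$ is precisely the content of Kantor's Theorem~2 in \cite{Kantor} (building on Wille~\cite{Wille67}), and it does not follow from the tools you invoke. Proposition~\ref{modularePaareBleibenInErweiterungenGleich} only says that existing modular pairs survive extension; it gives no reason why the new points dictated by $N_p$ and $N_q$ for $p\ne q$ should coincide, nor why the successive single-element extensions terminate without spawning new non-modular pairs. The construction in Theorem~\ref{satz:ErsterEinbettungssatz} that you propose to imitate is genuinely rank-$4$ specific: it rests on Theorem~\ref{theorem:2}, whose line-partition and bundle-condition arguments have no evident higher-rank analogue (see the remarks in the paper's concluding section).

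The paper's proof avoids all of this by invoking Kantor's lifting theorem as a black box. It contracts by a flat of rank $n-4$ (not by a single point), landing directly in rank~$4$; applies the assumed rank-$4$ case of the conjecture to each such contraction; and then uses Theorem~\ref{satz:ErsterEinbettungssatz} only to upgrade those embeddings to \emph{strong} embeddings in Kantor's sense, which is exactly the hypothesis of Theorem~2 of \cite{Kantor}. The passage from embeddable rank-$4$ contractions to an embedding of $M$ itself is entirely delegated to Kantor's paper. Your sketch, by contrast, amounts to reproving that theorem from scratch, and the propositions you cite are not nearly enough to carry it.
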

\begin{proof}
  Assume Kantor's conjecture holds for rank-4 matroids.  Let $M$ be a
  finite hypermodular matroid of rank $n>4$.  All contractions of $M$
  by a flat of rank $n-4$ are finite hypermodular matroids of rank 4,
  hence are embeddable into a modular matroid.  Using Theorem
  \ref{satz:ErsterEinbettungssatz}, it is easy to see that these
  contractions are also \textit{strongly embeddable} (as defined in
  \cite{Kantor}, Definition 2) into a modular matroid. Hence the
  matroid $M$ satisfies the assumptions of Theorem 2 in \cite{Kantor},
  and thus is embedabble into a modular matroid, implying the general
  case of Kantor's Conjecture.
\end{proof}

Similarly, it is easy to show that our Conjecture \ref{conj:1} is
reducible to the rank-4 case.  We have a second embedding theorem:

\begin{theorem}\label{satz:ZweiterEinbettungssatz}
  Let $M$ be a matroid of finite rank on a set $E$ where $E$ is finite
  or countably infinite. Then $M$ is embeddable in an OTE matroid of
  the same rank.
\end{theorem}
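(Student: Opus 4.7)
The plan is to iterate the construction of Theorem~\ref{satz:ErsterEinbettungssatz} but now driven by arbitrary intersectable non-modular pairs rather than disjoint coplanar line pairs in rank-$4$ hypermodular matroids. I build an increasing chain of rank-preserving single-element extensions $M = M_0 \subseteq M_1 \subseteq M_2 \subseteq \cdots$, each of finite rank $r = \rk(M)$ on a countable groundset, and set $\overline{M} := \bigcup_{k \ge 0} M_k$. At every stage, pairs $(S,T)$ of finite subsets of $E(M_k)$ are countable, and each such pair encodes a pair of flats $(\cl_{M_k}(S), \cl_{M_k}(T))$. A standard dovetailing (using, say, Cantor pairing on stages and indices into the countable lists produced so far) yields a sequence of candidate pairs $(S_k, T_k)_{k \ge 1}$ that visits every pair from every stage infinitely often. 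At step $k$ I set $X := \cl_{M_{k-1}}(S_k)$ and $Y := \cl_{M_{k-1}}(T_k)$: if $(X,Y)$ is non-modular and intersectable in $M_{k-1}$, let $M_k := M_{k-1} +_{\mathcal{M}} p$ where $\mathcal{M}$ is the modular cut generated by $X$ and $Y$; otherwise $M_k := M_{k-1}$. Since $p \in \cl_{M_k}(X)$, the point $p$ is not a coloop and the rank is preserved, and because intersectability forces $X \cap Y \notin \mathcal{M}$, a direct rank calculation yields $\delta_{M_k}(\cl_{M_k}(X), \cl_{M_k}(Y)) = \delta_{M_{k-1}}(X, Y) - 1$.

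To show $\overline{M}$ is OTE, I argue by contradiction. If not, Theorem~\ref{theorem:nonOTEmatroidHyperplaneExists} produces an intersectable non-modular pair $(F,H)$ in $\overline{M}$, witnessed by an extension $N$ of $\overline{M}$ in which $(\cl_N(F), \cl_N(H))$ is modular. Pick finite bases $B_F$ of $F$ and $B_H$ of $H$---available because $\overline{M}$ has finite rank---and choose $n$ with $B_F \cup B_H \subseteq E(M_n)$. The flats $F_n := \cl_{M_n}(B_F)$ and $H_n := \cl_{M_n}(B_H)$ satisfy $\rk(F_n) = \rk(F)$, $\rk(H_n) = \rk(H)$, $\rk_{M_n}(F_n \cup H_n) = \rk_{\overline{M}}(F \cup H)$, and $\rk_{M_n}(F_n \cap H_n) \le \rk_{\overline{M}}(F \cap H)$; hence $\delta_{M_n}(F_n, H_n) \ge \delta_{\overline{M}}(F, H) > 0$. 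Moreover $N$ is also an extension of $M_n$, and because $\cl_N(F_n) = \cl_N(F)$ and $\cl_N(H_n) = \cl_N(H)$, the same $N$ witnesses the intersectability of $(F_n, H_n)$ in $M_n$---and likewise in every $M_k$ with $k \ge n$ in which the closures of $B_F, B_H$ still have positive modular defect. The candidate $(B_F, B_H)$ is visited infinitely often past stage $n$; each visit at positive defect drops the defect by $1$, so after finitely many visits the corresponding pair of flats becomes modular in some $M_{k^\ast}$. By Proposition~\ref{modularePaareBleibenInErweiterungenGleich} it stays modular in every further stage, whence $(F,H) = (\cl_{\overline{M}}(B_F), \cl_{\overline{M}}(B_H))$ is modular in $\overline{M}$, a contradiction.

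The main obstacle I anticipate is tracking a single pair along the chain so that ``intersectability at positive defect'' is a persistent property even under extensions aimed at other pairs. The device above is to work through the fixed finite bases $B_F, B_H$: the union-rank $\rk(B_F \cup B_H)$ is invariant under extension, while the rank of the intersection of the closures can only grow, so the modular defect of the pair, measured through these bases, is a non-increasing non-negative integer along the chain. Intersectability at each stage is then supplied for free by the extension $N$ promised by the assumed failure of $\overline{M}$ to be OTE, restricted from $\overline{M}$ down to the finite stage in question. Checking the rank and modular-defect bookkeeping for the single-element extension step, and wiring up the dovetailing so that every pair of finite subsets arising at any stage is revisited unboundedly often, is then routine.
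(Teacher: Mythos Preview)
Your proof is correct, and the underlying idea---iterate single-element extensions that decrease the modular defect of intersectable pairs, take the union, and argue by contradiction via finite bases---is the same as the paper's. The organisation differs, however, and the differences are worth recording.

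The paper uses a two-level construction: first a chain $M_0,M_1,\ldots$ processing a fixed list of pairs of $M$, with union $\overline{M}$; then, since $\overline{M}$ may acquire new intersectable pairs, an outer chain $\overline{M},\overline{M}_1,\overline{M}_2,\ldots$ with union $\overline{\overline{M}}$. You collapse this to a single chain by dovetailing over all pairs of finite subsets from all stages, which is cleaner and avoids the bookkeeping of ``new'' pairs appearing at the outer level. More importantly, your contradiction argument is more explicit than the paper's: where the paper simply asserts that ``in $\overline{M}_{k+1}$ the pair would not be intersectable anymore,'' you invoke Theorem~\ref{theorem:nonOTEmatroidHyperplaneExists} to produce an extension $N$ of $\overline{M}$ in which the closures of $B_F,B_H$ are already modular, and then use that single $N$ as a witness for intersectability at \emph{every} finite stage with positive defect. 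This device (which the paper does not use in its proof of this theorem) is what makes your monotone-defect argument airtight: the defect measured through $B_F,B_H$ is non-increasing along the chain, strictly decreases whenever the dovetailing visits $(B_F,B_H)$ at positive defect, and hence reaches zero after finitely many visits. One small remark: you say the defect drops by exactly $1$ at each such visit; your computation indeed gives exactly $1$ (since $X\cap Y\notin\mathcal{M}$ forces $\rk_{M_k}((X\cap Y)\cup p)=\rk(X\cap Y)+1$), but your argument only needs ``at least $1$,'' which is immediate.
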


\begin{proof} We proceed similar to the proof of
  Theorem~\ref{satz:ErsterEinbettungssatz}. Let $ P $ be the list of
  all intersectable non-modular pairs of $ M $.  We build a chain {of
    matroids $ M = M_0, M_1 \hdots $, }where each matroid $M_{i+1} $
  is the extension of $M_i$, where the modular defect of the $i$-th
  pair on the list can no longer be decreased.  Let $ \overline{M} $
  be the union of the extension chain as in the proof before. Then $
  \overline{M} $ is a matroid of finite rank with a finite or
  countably infinite ground set.  If there still are
  {intersectable} non-modular pairs in $ \overline{M} $
  we repeat the process and obtain $ \overline{M} _1$.  This yields a
  chain { of matroids $ \overline{M} , \overline{M} _1,
    \overline{M} _2, \hdots $.  Let $ \overline{\overline{M}} $ }be
  the union of that extension chain.  Clearly, $
  \overline{\overline{M}} $ is a matroid. We claim it is OTE. For
  assume it had a non-trivial modular cut generated by a
  {non-modular} pair of intersectable flats $f_1,f_2$.
  Since their rank is finite, there exists an index $k$ such that the
  matroid $ \overline{M} _k $ contains a basis of $f_1$ as well as of
 $f_2$. But then in the matroid $\overline{M} _{k+1}$ the pair would
  not be intersectable anymore and we get a contradiction. Thus, $
  \overline{\overline{M}} $ is an OTE matroid.
\end{proof}

We have a similar result for hypermodular matroids:

\begin{theorem}\label{satz:KorollarZweiterEinbettungssatz}
Every matroid $M$ of finite rank $r$ with {finite or }countably infinite groundset is embeddable in a infinite hypermodular matroid $\overline{\overline{M}} $ of rank $r$.
\end{theorem}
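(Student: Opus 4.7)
The plan is to mimic the construction of Theorem \ref{satz:ZweiterEinbettungssatz}, but at each stage processing only non-modular pairs of hyperplanes. For $r\le 2$ every matroid is trivially hypermodular (two distinct hyperplanes then have rank $0$ or $1$ and their modular defect is automatically zero), so I assume $r\ge 3$; by adjoining countably many loops we may further assume the ground set is already infinite.

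Inductively, given a matroid $M_i$ of rank $r$ on a countable ground set, I enumerate its at-most-countable list of non-modular hyperplane pairs and process them in turn. For such a pair $(H,H')$, the set $\{H,H',E(M_i)\}$ is a modular cut: the only pair of its members whose modularity would force their intersection to be included is $(H,H')$ itself, which is non-modular. Having two minimal elements, this cut is not principal, so by Proposition \ref{intersectablePairs} the pair is intersectable. The corresponding single-element extension adds the new point only to flats containing $H$ or $H'$; in particular every hyperplane of $M_i$ different from $H$ and $H'$ remains unchanged and is still a hyperplane of the extension. The defect $\delta(H,H')$ drops by one, and after at most $r-2$ such mini-extensions $(H,H')$ is modular. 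By Proposition \ref{modularePaareBleibenInErweiterungenGleich} it stays modular under every further extension carried out in this stage. Performing this countable-length inner loop produces the next matroid $M_{i+1}$, again of rank $r$ on a countable ground set.

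Set $\overline{\overline{M}}=\bigcup_i M_i$, with a finite set declared independent iff it is independent in some $M_i$. It is a matroid of rank $r$ containing $M$ as a restriction. To verify hypermodularity, take two hyperplanes $H,H'$ of $\overline{\overline{M}}$ with bases $B,B'$ of size $r-1$; since $B\cup B'$ is finite it lies in some $E(M_j)$, where $\cl_{M_j}(B)$ and $\cl_{M_j}(B')$ are hyperplanes of $M_j$. By construction these are rendered modular in $M_{j+1}$, and Proposition \ref{modularePaareBleibenInErweiterungenGleich} propagates modularity through every later stage and through the union, where their closures coincide with $H$ and $H'$ respectively. Thus $(H,H')$ is a modular pair in $\overline{\overline{M}}$.

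The step I expect to be fussiest is the inner-loop bookkeeping: one must check that each mini-extension affects only the two currently processed hyperplanes, so that unprocessed pairs on the stage-$i$ list remain hyperplane pairs in the intermediate matroid, while Proposition \ref{modularePaareBleibenInErweiterungenGleich} keeps already-processed pairs modular. Once this is established the limit argument is routine, because every basis of a hyperplane is a finite set of size $r-1$ and is therefore witnessed in some $M_j$.
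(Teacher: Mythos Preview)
Your argument is correct and follows the same route as the paper: iterate single-element extensions through the modular cuts $\{H,H',E(M_i)\}$ for non-modular hyperplane pairs in a two-level chain and take the union. The only real difference is how you secure the word ``infinite'': you adjoin loops, whereas the paper observes (for non-modular $M$) that the rank-$(r-3)$ contractions of the limit are infinite non-Desarguesian projective planes---more informative, but it leaves the already-hypermodular finite case unaddressed, which your loop trick handles.
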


\begin{proof}
  The proof mimics the one of
  Theorem~\ref{satz:ZweiterEinbettungssatz}, except that we have only
  the non-modular pairs of hyperplanes in the list.  This generalizes
  the technique of \textit{free closure} of rank-3 matroids and it is
  not difficult to show (see e.g.\ Kantor~\cite{Kantor}, Example 5)
  that if $M$ is non-modular (hence $\rk \geq 3$),  {every} contraction
  of $\overline{\overline{M}} $ by a flat of rank $\rk-3$ in $\overline{\overline{M}} $  {is an} infinite projective
  non-Desarguesian plane and hence $\overline{\overline{M}} $ must be infinite, too.
\end{proof}

\section{On the Non-Existence of Certain Modular Pairs in Extensions
  of OTE Matroids}

In order to prove that the proper amalgam exists for any two
extensions of a finite rank-4 OTE matroid we need some technical
lemmas. We will show that certain modular pairs cannot exist in
extensions of rank-4 OTE matroids.  We need some preparations for
that.

\begin{prop}\label{HochstaettlerLemma}
Let $ M $ be matroid with groundset $ T $, let $ (X,Y) $ be a modular pair of subsets of $ T $ and let $ Z \subseteq X \setminus Y $. 
Then $ (X \setminus Z,Y) $ is a modular pair, too.
\end{prop}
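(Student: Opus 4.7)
The plan is to reduce everything to two applications of submodularity, joined by the modularity assumption on $(X,Y)$. The crucial observation is the set-theoretic consequence of $Z \subseteq X \setminus Y$: since $Z \cap Y = \emptyset$, we obtain
\[
(X \setminus Z) \cap Y = X \cap Y \quad \text{and} \quad (X \setminus Z) \cup Y = (X \cup Y) \setminus Z.
\]
So what needs to be shown is
\[
\rk(X \setminus Z) + \rk(Y) = \rk((X \cup Y) \setminus Z) + \rk(X \cap Y).
\]

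The direction $\geq$ is just submodularity applied to the pair $(X \setminus Z,\, Y)$, together with the identities above. For the reverse direction, I would instead apply submodularity to the pair $\bigl(X,\, (X \cup Y) \setminus Z\bigr)$. A direct computation of the union and intersection of these two sets (using $Z \subseteq X$ on the one hand and $Z \cap Y = \emptyset$ on the other) gives
\[
X \cup \bigl((X \cup Y) \setminus Z\bigr) = X \cup Y \quad \text{and} \quad X \cap \bigl((X \cup Y) \setminus Z\bigr) = X \setminus Z.
\]
Submodularity then yields $\rk(X) + \rk\bigl((X \cup Y) \setminus Z\bigr) \geq \rk(X \cup Y) + \rk(X \setminus Z)$.

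Now the modularity hypothesis on $(X,Y)$ enters: it tells us that $\rk(X \cup Y) - \rk(X) = \rk(Y) - \rk(X \cap Y)$. Substituting this into the inequality above and rearranging produces exactly the $\leq$ direction needed, and combining both estimates shows $\delta(X \setminus Z, Y) = 0$.

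There is no real obstacle here; the only subtlety is choosing the correct pair of sets for the second submodularity application so that the modular equality for $(X,Y)$ can be invoked to cancel the correction terms. Everything else is routine bookkeeping with $Z \cap Y = \emptyset$.
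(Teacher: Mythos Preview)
Your proof is correct and essentially identical to the paper's. The paper's key inequality $\rk(X\cup Y)-\rk(X)\le \rk((X\setminus Z)\cup Y)-\rk(X\setminus Z)$ is precisely submodularity applied to the pair $\bigl(X,\,(X\cup Y)\setminus Z\bigr)$ that you single out (noting $(X\setminus Z)\cup Y=(X\cup Y)\setminus Z$), after which both arguments use the modularity of $(X,Y)$ and one more application of submodularity in the same way.
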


\begin{proof}
  Submodularity  implies $\rk(X \cup Y) -\rk(X) \le
  \rk((X \setminus Z) \cup Y)-\rk(X \setminus Z)$. Using modularity
  of $(X,Y)$ we find
  \begin{eqnarray*}
\rk(X \setminus Z) +\rk(Y) &=& \rk(X \cup Y) + \rk((X \setminus Z) \cap Y) -\rk(X) + \rk(X
  \setminus Z)  \\&\le& \rk((X \setminus Z)\cup Y) +\rk((X \setminus Z)
  \cap Y)    
  \end{eqnarray*}
and another application of submodularity implies the assertion.
\end{proof}

By $(D)$ we abbreviate the following list of assumptions:
\begin{itemize} 
\item $ M $ is a matroid with groundset $ T $ and rank function $ \rk $. 
\item $ M' $ is an extension of $ M $ with rank function $ \rk' $ and groundset $ E' $. 
\item $ X$ and $Y$ are subsets of $E'$ such that $ X \cap T =
  l_X$ and $Y \cap T = l_Y $ are two disjoint coplanar lines in $ M $.
\item $ X \cap Y $ is a flat in $ M' $. 
\end{itemize}

\begin{prop}\label{NotInClosureLemma}
  Assume $(D)$ and, furthermore, that $ X \setminus T \subseteq Y $
  and that $ (X,Y) $ is a modular pair of sets in $ M'$. Then
$x \notin \cl_{M'}(Y)$ for all  $x \in l_X$.
\end{prop}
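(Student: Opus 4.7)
The plan is to argue by contradiction, using the given modular pair hypothesis together with Proposition~\ref{HochstaettlerLemma} to ``strip down'' $X$ until only a single point of $l_X$ remains, and then exploit the flatness of $X \cap Y$ in $M'$.

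First I would unwind the set theory. Since $l_X \cap l_Y = \emptyset$ and $l_X \subseteq T$, no element of $l_X$ lies in $Y$ (elements of $Y$ inside $T$ lie in $l_Y$), so $l_X \subseteq X \setminus Y$; conversely every element of $X$ outside $Y$ must lie in $T$ by the hypothesis $X \setminus T \subseteq Y$, hence in $l_X$. Thus $X \setminus Y = l_X$. The same bookkeeping applied to $X \cap Y$ gives $X \cap Y = X \setminus T$, which by hypothesis $(D)$ is a flat of $M'$.

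Now suppose, for contradiction, that some $x \in l_X$ satisfies $x \in \cl_{M'}(Y)$; in particular $\rk'(Y \cup \{x\}) = \rk'(Y)$. Set $Z = l_X \setminus \{x\} \subseteq X \setminus Y$. By Proposition~\ref{HochstaettlerLemma} the pair $(X \setminus Z, Y)$ is still modular in $M'$. But $X \setminus Z = (X \cap Y) \cup \{x\} = (X \setminus T) \cup \{x\}$, and using $X \setminus T \subseteq Y$ together with $x \notin Y$ one computes $(X \setminus Z) \cup Y = Y \cup \{x\}$ and $(X \setminus Z) \cap Y = X \setminus T$. The modular equation therefore becomes
\[
\rk'\bigl((X \setminus T) \cup \{x\}\bigr) + \rk'(Y) = \rk'(Y \cup \{x\}) + \rk'(X \setminus T) = \rk'(Y) + \rk'(X \setminus T),
\]
so $x \in \cl_{M'}(X \setminus T)$.

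The final step is to invoke that $X \setminus T = X \cap Y$ is a flat of $M'$, which forces $x \in X \setminus T$. This contradicts $x \in l_X \subseteq T$, finishing the proof. The only non-routine step is recognizing that Proposition~\ref{HochstaettlerLemma} can be applied to shrink $X$ down to $(X \cap Y) \cup \{x\}$ while preserving modularity; everything else is then a short rank computation.
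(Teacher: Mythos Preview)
Your proof is correct, but it takes a different route from the paper's. The paper argues more directly: if some $x\in l_X$ lies in $\cl_{M'}(Y)$, then coplanarity of $l_X$ and $l_Y$ gives $l_X \subseteq x \vee l_Y \subseteq \cl_{M'}(Y)$; combined with $X\setminus T\subseteq Y$ this yields $X\subseteq \cl_{M'}(Y)$, so $\rk'(X\cup Y)=\rk'(Y)$, and modularity forces $\rk'(X)=\rk'(X\cap Y)$, contradicting that $X\cap Y$ is a proper flat of $X$. Your argument instead invokes Proposition~\ref{HochstaettlerLemma} to strip $X$ down to $(X\cap Y)\cup\{x\}$ and reads off $x\in\cl_{M'}(X\cap Y)$ from the resulting modular equation. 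The trade-off: the paper's proof is shorter and uses only the closure exchange via coplanarity, while your version never actually uses that $l_X$ and $l_Y$ are coplanar (only that they are disjoint), so it establishes a slightly stronger statement at the cost of appealing to the auxiliary proposition.
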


\begin{proof}
  Assume to the contrary that there exists $ x\in l_X $ with $ x \in
  \cl_{M'}(Y) $. Then coplanarity of $ l_X $ and $ l_Y $
 implies
 \[X \cap T = l_X \subseteq l_X \lor l_Y = x \lor l_Y \subseteq
 \cl_{M'}(Y).\] Hence $X \subseteq \cl_{M'}(Y)$, implying $\rk'(Y) =
 \rk'(X \cup Y) $ and  modularity of $ (X,Y) $ yields
 $ \rk'(X) = \rk'(X \cap Y) $, a contradiction, because $ X \cap Y
 $ is a flat in $M'$ and a proper subset of $ X $.
\end{proof} 

\begin{lemma}\label{LinieErsetzRangGleich}
  Assume $(D)$ and that $ M $ is of rank 4 (the rank of $ M' $ may be
  larger) and, furthermore,
\begin{itemize}
\item $ (X,Y) $ is a modular pair of sets in $ M' $ with $ X \setminus T \subseteq Y $ and $ T \nsubseteq \cl_{M'}(X \cup Y) $ and
\item $ l' \subseteq T $ is a line disjoint coplanar to $ l_X $ and $ l_Y $, not lying in $ l_X \lor l_Y $.
\end{itemize}
Then $ X' = (X \setminus T) \cup l' $ implies $ \rk'(X') = \rk'(X) $.
\end{lemma}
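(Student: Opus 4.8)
The plan is to reduce everything to a rank computation around the flat $Z := X\cap Y$, and then to settle one delicate case by appealing to Proposition~\ref{prop:escher}. First I would set up the geometry. Since $l_X\cap l_Y=\emptyset$ and $X\setminus T\subseteq Y$, we have $Z = X\setminus T$, so $Z$ is disjoint from $T$ and, by $(D)$, is a flat of $M'$; hence $\cl_{M'}(Z)\cap T=\emptyset$. Put $e := l_X\lor l_Y$ and $f := l_X\lor l'$: these are planes of $M$, and because $\rk(M)=4$ and $l'\nsubseteq e$ one gets $e\land f=l_X$ and $e\lor f=T$. As $\cl_{M'}(X\cup Y)\cap T$ is a flat of $M$ containing $\cl_M(l_X\cup l_Y)=e$ but, by hypothesis, is not $T$, and as a plane is a coatom of the flat lattice of the rank-$4$ matroid $M$, it follows that $\cl_{M'}(X\cup Y)\cap T=e$. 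Consequently $\cl_{M'}(X)\cap T$ is a flat of $M$ with $l_X\subseteq\cl_{M'}(X)\cap T\subseteq e$, hence equals $l_X$ or $e$; in particular neither $f$ nor $l'$ is contained in $\cl_{M'}(X)$.

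Next I would dispose of the easy half. Since $l'\nsubseteq\cl_{M'}(X)$, adding $l'$ to $X$ raises the rank; since over the subset $l_X\subseteq\cl_{M'}(X)$ the line $l'$ raises the rank only by $\rk_M(l_X\lor l')-\rk_M(l_X)=1$, submodularity shows it raises it by at most $1$ over $\cl_{M'}(X)$ as well. Thus $\rk'(X\cup l')=\rk'(X)+1$, and as $X\cup l'=Z\cup l_X\cup l'=X'\cup l_X$ this gives $\rk'(X'\cup l_X)=\rk'(X)+1$, so already $\rk'(X')\le\rk'(X)+1$. Write $\rk'(X)=\rk'(Z)+a$ and $\rk'(X')=\rk'(Z)+a'$; here $a,a'\in\{1,2\}$ because $\cl_{M'}(Z)$ misses $T$. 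Letting $d$ be the rank that $l_X$ adds over $X'$, the equality $\rk'(X'\cup l_X)=\rk'(X)+1$ becomes $a+1=a'+d$. Hence $a'=a$ --- which is exactly the claim --- unless $d=0$ (forcing $a=1$, $a'=2$, i.e.\ $l_X\subseteq\cl_{M'}(X')$) or $d=2$ (forcing $a=2$, $a'=1$). Ruling out these two configurations is the crux.

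In either bad configuration, precisely one of the coplanar lines $l_X,l'$ is dependent over $Z$ --- it adds only $1$ to $\rk'(Z)$ --- while the other is free over $Z$. I would use that a line $m$ adding only $1$ over $Z$ forces, via submodularity applied to the flats $\cl_{M'}(Z)$ and $\cl_{M'}(m)$, either a point of $\cl_{M'}(Z)$ to lie on $m$ in $M'$ (necessarily a point outside $T$) or a modular defect $1$ between those two flats. Proposition~\ref{NotInClosureLemma} forbids any point of $l_X$ from lying in $\cl_{M'}(Y)$, which excludes $\cl_{M'}(Y)\cap T=e$ and leaves $\cl_{M'}(Y)\cap T=l_Y$; Proposition~\ref{HochstaettlerLemma} shows that $(Z\cup v,Y)$ is a modular pair for each single point $v\in l_X$, locating $\cl_{M'}(Z\cup v)$ inside $\cl_{M'}(Y\cup v)$. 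Finally, since $l_X,l_Y,l'$ are pairwise coplanar but not coplanar as a triple, Proposition~\ref{prop:escher} forces an intersection point occurring on one of these three lines in $M'$ to occur on the other two; combining these facts, a collapse over $Z$ of one of $l_X,l'$ would propagate to the other, which is impossible, so $a'=a$ and $\rk'(X')=\rk'(X)$.

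The main obstacle is the asymmetry between the two lines: we control $l_X$ through the modular pair $(X,Y)$ --- hence through Propositions~\ref{NotInClosureLemma} and~\ref{HochstaettlerLemma} --- whereas $(D)$ gives no direct information about $l'$, so all the work lies in transporting independence/collapse information from $l_X$ to $l'$ along the coplanar bundle. I expect the subcase $\rk'(Y)\ge 3$ --- where $\cl_{M'}(Y)\cap T=l_Y$ but $\cl_{M'}(Y)$ is strictly larger than $\cl_{M'}(l_Y)$, and where a ``collapse over $Z$'' might manifest as a modular defect rather than an honest new point on a line --- to demand the most care, and to be the place where Proposition~\ref{prop:escher} has to be applied to a carefully chosen transversal line rather than to $l_X,l_Y,l'$ directly.
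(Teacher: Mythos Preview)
Your first two paragraphs are correct and set things up well, but the proof is not finished: your third paragraph is a sketch (``a collapse \dots\ would propagate to the other'') and your fourth paragraph explicitly says you \emph{expect} a certain subcase to be hard. In fact both bad cases dissolve without Proposition~\ref{prop:escher}, and the paper's argument shows how.

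First, the case $d=2$ is impossible for the same reason you showed $l'$ adds at most~$1$ over~$X$: since $l_X$ and $l'$ are coplanar and $l'\subseteq X'$, submodularity gives $\rk'(X'\cup l_X)-\rk'(X')\le \rk'(l'\cup l_X)-\rk'(l')=1$, i.e.\ $d\le 1$. You used this argument in one direction but forgot it applies symmetrically.

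Second, for $d=0$ your plan via Proposition~\ref{prop:escher} has a real gap: a point in $\cl_{M'}(Z)\cap\cl_{M'}(l_X)$ lies in $\cl_{M'}(Y)$, but there is no reason it should lie on $\cl_{M'}(l_Y)$ or $\cl_{M'}(l')$, so the Escher configuration never materialises. The paper instead uses the closure exchange axiom directly. Pick $x\in l_X$ and $x'\in l'$. Coplanarity gives $l'\subseteq\cl_M(x'\cup l_Y)$, hence $X'=Z\cup l'\subseteq\cl_{M'}(x'\cup Y)$. If $d=0$ then $x\in\cl_{M'}(X')\subseteq\cl_{M'}(x'\cup Y)$; since Proposition~\ref{NotInClosureLemma} gives $x\notin\cl_{M'}(Y)$, exchange yields $x'\in\cl_{M'}(x\cup Y)$. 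But $\cl_{M'}(x\cup Y)=\cl_{M'}(X\cup Y)$ (again by coplanarity, $l_X\subseteq\cl_M(x\cup l_Y)$), so $x'\in\cl_{M'}(X\cup Y)\cap T=e$, contradicting $l'\nsubseteq e$. This is exactly the paper's route: it never introduces $a,a'$ at all, but simply shows $x'\notin\cl_{M'}(X)$ and $x\notin\cl_{M'}(X')$ for single points, then uses $\cl_M(l_X\cup x')=\cl_M(l'\cup x)$ to conclude $\rk'(X)+1=\rk'(X\cup x')=\rk'(X'\cup x)=\rk'(X')+1$.
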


\begin{proof}
  Choose $ x \in l_X $ and $ x' \in l' = X' \cap T $.  Because $ l_X $
  and $ l_Y $ are coplanar and $ X \setminus T \subseteq Y $ we conclude $ \cl_{M'}(x \cup Y) = \cl_{M'}(X \cup Y)$.
  Similarly, we get $  \cl_{M'}(x' \cup Y) = \cl_{M'}(X' \cup Y) $.

  By assumption $M$, being of rank $4$, is spanned by $l',l_X$ and
  $l_Y$ and hence $ T \subseteq \cl_{M'}(\{x, x'\} \cup Y) $.
  If we had $ x' \in \cl_{M'}(x \cup Y) $, then this would imply that $ T
  \subseteq \cl_{M'}(x \cup Y) = \cl_{M'}(X \cup Y) $, contradicting
  the assumptions, thus $ x' \notin \cl_{M'}(x \cup Y) $. In
  particular $ x' \notin \cl_{M'}(X). $

{ Proposition~\ref{NotInClosureLemma}  yields $ x \notin \cl_{M'}(Y) $. If we had $ x \in \cl_{M'}(x' \cup Y) $
  using the  exchange-axiom of the closure-operator we would find
  $ x' \in \cl_{M'}(x \cup Y)$ which is impossible. } Hence {we obtain }
  $ x \notin \cl_{M'}(x' \cup Y) = \cl_{M'}(X' \cup Y) $.
  In particular $ x \notin \cl_{M'}(X') $.  

  The choice of $x$ and $x'$ implies $ \cl_M(l_X \cup x') =
  \cl_{M}(l' \cup x) $ and using $X \setminus T=X' \setminus T$ we  obtain $\cl_{M'}(X \cup x') = \cl_{M'}(X' \cup x). $ We
  conclude 
\[\rk'(X') + 1 = \rk'(X' \cup x) = \rk'(X \cup x') = \rk'(X) + 1, \]
{hence $\rk'(X') = \rk'(X)$.}  
\end{proof}

\begin{lemma}\label{modPaarGehtNicht1}
  Assume $(D)$, $ M $ is a rank-4 OTE matroid and  $ X
  \setminus T \subseteq Y $, $ Y \setminus T \subseteq X $ and $ T
  \nsubseteq \cl_{M'}(X \cup Y) $.  Then 
 $(X,Y)$ is not a modular pair in $M'$.
\end{lemma}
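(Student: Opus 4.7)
The plan is to assume, for contradiction, that $(X,Y)$ is a modular pair in $M'$ and to deduce $T\subseteq\cl_{M'}(X\cup Y)$, contradicting the hypothesis. The conditions $X\setminus T\subseteq Y$, $Y\setminus T\subseteq X$ together with $l_X\cap l_Y=\emptyset$ give $S:=X\cap Y=X\setminus T=Y\setminus T$, so $X=l_X\cup S$ and $Y=l_Y\cup S$.

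Since $M$ is an OTE matroid of rank~$4$ carrying a disjoint coplanar pair of lines, $M$ is non-modular and the modular cut generated in $M$ by $l_X$ and $l_Y$ is non-empty, hence principal; as it contains both $l_X$ and $l_Y$, it must be the principal cut on $\cl_M(\emptyset)$, which is the set of all flats. By Proposition~\ref{intersectablePairs} the pair $(l_X,l_Y)$ is therefore not intersectable in $M$, and Theorem~\ref{theorem:2}(iii) supplies two non-coplanar lines $l_3,l_4\subseteq T$, each disjoint from and coplanar with both $l_X$ and $l_Y$, with no three of the four lines in a common plane. In particular both $l_3$ and $l_4$ satisfy the hypotheses on $l'$ in Lemma~\ref{LinieErsetzRangGleich}. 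Applying that lemma to $(X,Y)$ with $l'=l_3$ gives $\rk'(S\cup l_3)=\rk'(X)$, and applying it symmetrically to $(Y,X)$ with $l'=l_4$---this is precisely where the hypothesis $Y\setminus T\subseteq X$ enters---gives $\rk'(S\cup l_4)=\rk'(Y)$.

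Submodularity of $\rk'$ and the assumed modularity of $(X,Y)$ then yield
\[\rk'(S\cup l_3\cup l_4)\le \rk'(S\cup l_3)+\rk'(S\cup l_4)-\rk'(S)=\rk'(X)+\rk'(Y)-\rk'(S)=\rk'(X\cup Y).\]
On the other hand, because $l_3$ and $l_4$ are non-coplanar in the rank-$4$ matroid $M$ we have $\cl_M(l_3\cup l_4)=T$, hence $T\subseteq\cl_{M'}(l_3\cup l_4)$. This gives
\[X\cup Y=l_X\cup l_Y\cup S\subseteq T\cup S\subseteq \cl_{M'}(l_3\cup l_4\cup S),\]
from which $\cl_{M'}(X\cup Y)\subseteq \cl_{M'}(l_3\cup l_4\cup S)$ and $\rk'(X\cup Y)\le \rk'(l_3\cup l_4\cup S)$. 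Combining the two rank inequalities forces equality, so the two flats coincide: $\cl_{M'}(X\cup Y)=\cl_{M'}(l_3\cup l_4\cup S)\supseteq T$, contradicting $T\not\subseteq\cl_{M'}(X\cup Y)$. The principal obstacle is isolating the V\'amos-configuration furnished by OTE via Theorem~\ref{theorem:2}(iii); once $l_3,l_4$ are in hand, the symmetric application of Lemma~\ref{LinieErsetzRangGleich} enabled by the joint hypotheses $X\setminus T\subseteq Y$ and $Y\setminus T\subseteq X$ closes the rank estimate.
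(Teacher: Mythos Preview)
Your proof is correct and follows essentially the same approach as the paper: both invoke Theorem~\ref{theorem:2}(iii) to obtain the two V\'amos-type lines, apply Lemma~\ref{LinieErsetzRangGleich} once in each direction (using the symmetric hypotheses $X\setminus T\subseteq Y$ and $Y\setminus T\subseteq X$), and then combine submodularity with the fact that the new lines span $T$. The only cosmetic difference is that the paper derives a strict inequality $\rk'(X\cup Y)<\rk'(X'\cup Y')$ contradicting modularity directly, whereas you phrase the same chain of inequalities as an equality forcing $T\subseteq\cl_{M'}(X\cup Y)$.
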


\begin{proof}

  { OTE matroids are hypermodular, hence} $ M $ is hypermodular, OTE
  and of rank $4$. By Theorem \ref{theorem:2} (iii), it has two lines
  $ l_1 $ und $ l_2 $ that span $M$ but are both disjoint coplanar to
  $ l_X $ and $ l_Y$ and disjoint to $l_X \vee l_Y$.

Assume that  $(X,Y)$ were a modular pair in $M'$.
Let $ X' = (X \setminus T) \cup l_1 $ and $ Y' = (Y \setminus T) \cup l_2 $. Then by Lemma \ref{LinieErsetzRangGleich}
\begin{equation}\label{RaengeXX'YY'GenauGleich}
\rk'(X') = \rk'(X) \text{ and } \rk'(Y') = \rk'(Y).
\end{equation} 
Since $ T \subseteq \cl_{M}(l_1,l_2) \subseteq \cl_{M'}(X' \cup Y') $ and $
T \nsubseteq \cl_{M'}(X \cup Y) $ we get {
\begin{equation}\label{RangX'CupY'Groesser}
\rk'(X \cup Y) < \rk'(X \cup Y \cup T) = \rk'(X' \cup Y' \cup T) = \rk'(X' \cup Y').
\end{equation} }
By definition $X' \cap Y' = (X \setminus T)\cap (Y \setminus T) = X \cap Y$ and hence by sumodularity {
\begin{align*}
\rk'(X \cup Y) + \rk'(X \cap Y) &< \rk'(X' \cup Y') + \rk'(X' \cap Y') &\text{by \eqref{RangX'CupY'Groesser}}\\
&\leq \rk'(X') + \rk'(Y') \\
&= \rk'(X) + \rk'(Y) &\text{by \eqref{RaengeXX'YY'GenauGleich}}
\end{align*}}
contradicting $(X,Y)$ being a modular pair.
\end{proof}

We come to the main result of this section.

\begin{theorem}\label{ModularePaareNichtMÃ¶glich} 
  Let $ M $ be a rank-4 OTE matroid with groundset $T$ and $M'$ an
  extension of $M$ with ground set $E'$.  Let $X,Y \subseteq E'$ be
  sets such that $X \cap Y$ is a flat in $M'$ and the
  restrictions $l_X=X \cap T$ and $l_Y=Y \cap T$ are disjoint coplanar lines
  in $M$.  If $ T \nsubseteq \cl_{M'}(X \cup Y) $ then
  $(X, Y) $ is not a modular pair in $ M' $.
\end{theorem}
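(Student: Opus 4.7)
My plan is to reduce the statement to Lemma~\ref{modPaarGehtNicht1} by a two-step trimming of $X$ and $Y$ that preserves their intersection, their traces on $T$, and the failure of $T \subseteq \cl_{M'}(X \cup Y)$, while additionally forcing $X \setminus T \subseteq Y$ and $Y \setminus T \subseteq X$. The tool driving the reduction is Proposition~\ref{HochstaettlerLemma}, which allows us to delete elements of $X \setminus Y$ from $X$ (or, by symmetry, of $Y \setminus X$ from $Y$) without destroying modularity.

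Assuming for contradiction that $(X,Y)$ is a modular pair in $M'$, I would first set $Z_1 = X \setminus (T \cup Y)$ and $X'' = X \setminus Z_1 = l_X \cup (X \cap Y)$. Since $Z_1 \subseteq X \setminus Y$, Proposition~\ref{HochstaettlerLemma} gives that $(X'',Y)$ is still a modular pair. Then I would set $Z_2 = Y \setminus (T \cup X'')$ and $Y'' = l_Y \cup (X \cap Y)$. Observing that $l_X \cap Y = \emptyset$ (since $l_X, l_Y$ are disjoint and both lie in $T$) gives $X'' \cap Y = X \cap Y$, whence $Z_2 = Y \setminus (T \cup X) \subseteq Y \setminus X''$. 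A second application of Proposition~\ref{HochstaettlerLemma} (with the roles reversed) shows that $(X'', Y'')$ is a modular pair in $M'$.

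Next I would verify that $(X'', Y'')$ satisfies all hypotheses of Lemma~\ref{modPaarGehtNicht1}. The key computations are $X'' \cap T = l_X$, $Y'' \cap T = l_Y$, and $X'' \cap Y'' = X \cap Y$ (using $l_X \cap l_Y = \emptyset$ and $(X \cap Y) \cap T = \emptyset$), so the intersection remains a flat in $M'$. By construction $X'' \setminus T = X \cap Y \subseteq Y''$ and symmetrically $Y'' \setminus T \subseteq X''$. Finally, $X'' \cup Y'' \subseteq X \cup Y$ gives $T \nsubseteq \cl_{M'}(X'' \cup Y'')$. Applying Lemma~\ref{modPaarGehtNicht1} to $(X'', Y'')$ contradicts their being a modular pair, completing the proof.

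The main thing to get right is the bookkeeping in the trimming step: one must check that the two successive applications of Proposition~\ref{HochstaettlerLemma} are legitimate (i.e.\ the deleted sets really lie in $X \setminus Y$ and $Y \setminus X''$ respectively) and that the trimmed sets still intersect in exactly $X \cap Y$ so that the flatness hypothesis transfers. Both facts hinge on the simple but crucial observation that $X \cap Y$ is disjoint from $T$, which follows from $l_X \cap l_Y = \emptyset$.
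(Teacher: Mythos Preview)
Your proof is correct and follows essentially the same route as the paper: define $X' = l_X \cup (X \cap Y)$ and $Y' = l_Y \cup (X \cap Y)$, apply Proposition~\ref{HochstaettlerLemma} twice to conclude that $(X',Y')$ is still modular, and then invoke Lemma~\ref{modPaarGehtNicht1} for the contradiction. You spell out in detail the bookkeeping (in particular the use of $X \cap Y \cap T = \emptyset$) that the paper leaves implicit, but the argument is the same.
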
 

\begin{proof}
  Assume to the contrary that $ (X,Y) $ were a modular pair in $ M' $.
  Let $ X' = (X \cap T) \cup (X \cap Y) $ and $ Y' = (Y \cap T) \cup
  (X \cap Y) $.  Applying Proposition \ref{HochstaettlerLemma} twice,
  we find that the pair $ (X',Y') $ is modular in $ M' $, too, and
  satisfies the assumptions of Lemma~\ref{modPaarGehtNicht1} yielding the required contradiction.
\end{proof}

By contraposition we get

\begin{corollary}\label{Cor:ModularePaareNichtMÃ¶glich}
Let $ M $ be a rank-4 OTE matroid with groundset $T$ and $M'$ an extension of $M$. 
Let $(X,Y)$ be a modular pair of flats in $M'$ such that $(X \cap T,Y \cap T)$ is a non-modular pair in $M$.
Then $ T \subseteq \cl_{M'}(X \cup Y) $.
\end{corollary}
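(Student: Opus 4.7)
The plan is to deduce this directly from Theorem~\ref{ModularePaareNichtMÃ¶glich} by contraposition, after a mild reduction that puts us in the theorem's hypothesis of a pair of disjoint coplanar lines.

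First I would record two automatic facts: $X\cap Y$ is a flat of $M'$ as the intersection of two flats, and $l_X:=X\cap T$, $l_Y:=Y\cap T$ are flats of $M$. Since $M$ is OTE, it is hypermodular of rank~$4$, so any two planes of $M$ meet in a line, and a brief case analysis on the ranks of $l_X,l_Y$ (using also that every flat pair involving the empty flat or a point is automatically modular) shows that the only non-modular pairs of flats in $M$ are (a) two disjoint coplanar lines, or (b) a line disjoint from a plane. In case~(a), the contrapositive of Theorem~\ref{ModularePaareNichtMÃ¶glich} applies to $(X,Y)$ directly and gives $T\subseteq \cl_{M'}(X\cup Y)$.

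In case~(b), assume without loss that $l_X$ is a line and $H:=l_Y$ is a plane of $M$ with $l_X\cap H=\emptyset$. Mimicking the proof of Proposition~\ref{pro:HypModMatrEbenenLinien}, pick any $p\in H$: hypermodularity forces the plane $l_X\vee p$ to meet $H$ in a line $l'\subseteq H$, and $l'\cap l_X=\emptyset$ because $H\cap l_X=\emptyset$. Set $Y_1:=Y\setminus (H\setminus l')$, so that $Y_1\cap T=l'$. The removed set $H\setminus l'$ lies in $T\setminus l_X$, hence in $Y\setminus X$, so Proposition~\ref{HochstaettlerLemma} yields that $(X,Y_1)$ is still a modular pair in $M'$; and since nothing was stripped from the intersection, $X\cap Y_1=X\cap Y$ remains a flat of $M'$. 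Now $(X,Y_1)$ falls into case~(a), and the contrapositive of Theorem~\ref{ModularePaareNichtMÃ¶glich} gives $T\subseteq \cl_{M'}(X\cup Y_1)\subseteq \cl_{M'}(X\cup Y)$.

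The only delicate points are in case~(b): verifying that the trimmed set stays inside $Y\setminus X$ so that Proposition~\ref{HochstaettlerLemma} applies, and that the flat condition on $X\cap Y$ survives the reduction. Both are forced by the disjointness $l_X\cap H=\emptyset$. Everything else is a direct application of the theorem's contrapositive, matching the paper's own phrasing ``by contraposition we get.''
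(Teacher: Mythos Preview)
Your proof is correct. Case~(a) is exactly the contrapositive of Theorem~\ref{ModularePaareNichtMÃ¶glich}, matching the paper's one-line justification.

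In case~(b), however, your reduction via Proposition~\ref{HochstaettlerLemma} is more work than necessary. If $l_X$ is a line and $H=Y\cap T$ is a plane of the rank-$4$ matroid $M$ with $l_X\cap H=\emptyset$, then $H$ is a hyperplane not containing $l_X$, so $\rk(l_X\cup H)=4$ and hence $T=\cl_M(l_X\cup H)\subseteq\cl_{M'}(X\cup Y)$ immediately, without using the modularity of $(X,Y)$ or invoking the theorem at all. This is presumably why the paper is content to write ``by contraposition'' without further comment: the only case with actual content is the disjoint-coplanar-lines case, and that one is literally the contrapositive. Your reduction is valid and a nice illustration of how Proposition~\ref{HochstaettlerLemma} can be used, but it obscures that case~(b) is trivial.
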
 

Regarding the case that $(X \cap T, Y \cap T)$ is a disjoint line-plane pair,
we show the following.

\begin{lemma}\label{ReduktionFall2c}
  Let $ M $ be a rank-4 OTE matroid with groundset $ T $ and rank
  function $ \rk $ and let $ M' $ be an extension of $ M $ with
  groundset $ E' $ and rank function $ \rk' $.  Assume that $ X,Y
  \subseteq E' $ are sets such that $ X \cap T = e_X $ is a plane, $Y
  \cap T = l_Y $ a line disjoint from $ e_X $ in $M$, and that $ X
  \cap Y $ is a flat in $ M' $.  Assume that there exists a line $l_X
  \subseteq e_X$ coplanar with $l_Y$ such that $\rk'((X \cap Y) \cup
  e_X) = \rk'((X \cap Y) \cup l_X) + 1$.  Then $(X,
    Y)$ is not a modular pair in $M'$.
\end{lemma}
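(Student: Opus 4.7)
The plan is to assume, for contradiction, that $(X,Y)$ is a modular pair in $M'$, and reduce the plane-line situation of this lemma to the line-line situation already handled by Theorem~\ref{ModularePaareNichtMÃ¶glich}. Concretely, I will replace $e_X$ by the coplanar line $l_X$ while preserving modularity (via Proposition~\ref{HochstaettlerLemma}), and then use the rank hypothesis to manufacture a contradiction with that theorem.

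Set $Z := X \cap Y$. Since $e_X, l_Y \subseteq T$ and $e_X \cap l_Y = \emptyset$, the set $Z$ lies entirely in $E' \setminus T$; in particular, any subset of $T$ contained in $X$ is automatically disjoint from $Y$. I apply Proposition~\ref{HochstaettlerLemma} twice. First I strip $X \setminus (Z \cup e_X) = (X \setminus T) \setminus Y$ from $X$; this set is contained in $X \setminus Y$, so the proposition yields modularity of $(Z \cup e_X,\, Y)$. Then I strip $e_X \setminus l_X$, which is contained in $T \setminus l_Y$, hence in $(Z \cup e_X) \setminus Y$; this yields modularity of $(Z \cup l_X,\, Y)$.

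Both resulting pairs intersect $Y$ in the same set $Z$ (because $e_X \cap Y = l_X \cap Y = \emptyset$), and $Z \cup Y = Y$. Subtracting the two modularity identities therefore gives
\[\rk'(e_X \cup Y) - \rk'(l_X \cup Y) = \rk'(Z \cup e_X) - \rk'(Z \cup l_X) = 1,\]
where the last equality is the hypothesis of the lemma.

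On the other hand, $(Z \cup l_X,\, Y)$ fits the setup $(D)$ of Theorem~\ref{ModularePaareNichtMÃ¶glich}: the intersection $Z$ is a flat in $M'$ by hypothesis, and the restrictions to $T$ are the lines $l_X$ and $l_Y$, which are coplanar by assumption and disjoint because $l_X \subseteq e_X$ and $e_X \cap l_Y = \emptyset$. Since this pair has just been shown to be modular, the contrapositive of Theorem~\ref{ModularePaareNichtMÃ¶glich} forces $T \subseteq \cl_{M'}(Z \cup l_X \cup Y) = \cl_{M'}(l_X \cup Y)$, and in particular $e_X \subseteq \cl_{M'}(l_X \cup Y)$. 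But then $\rk'(e_X \cup Y) = \rk'(l_X \cup Y)$, contradicting the displayed identity. The only delicate point is the double invocation of Proposition~\ref{HochstaettlerLemma}, which requires checking that the two removed sets lie in $X \setminus Y$; once this reduction to a coplanar-line pair succeeds, the hypothesised rank drop by exactly $1$ is precisely what clashes with Theorem~\ref{ModularePaareNichtMÃ¶glich}.
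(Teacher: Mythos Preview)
Your proof is correct and follows essentially the same approach as the paper's: reduce to the line--line case by passing from $X$ to $(X\cap Y)\cup l_X$ and invoke Theorem~\ref{ModularePaareNichtMÃ¶glich}. The only difference is cosmetic: the paper applies Proposition~\ref{HochstaettlerLemma} once to obtain modularity of $X'=(X\cap Y)\cup e_X$ and then derives modularity of $X''=(X\cap Y)\cup l_X$ together with $\rk'(X''\cup Y)+1=\rk'(X'\cup Y)$ via a single rank computation, whereas you apply Proposition~\ref{HochstaettlerLemma} twice and then subtract the two modularity identities to get the same rank relation; both routes yield the same contradiction with Theorem~\ref{ModularePaareNichtMÃ¶glich}.
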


\begin{proof}
  Assume, for a contradiction, $ (X,Y) $ were a modular pair 
  in $ M' $ and let $ X' = (X \cap Y) \cup e_X $.  Since $ X' = X
  \setminus Z $ with $ Z = X \setminus (Y \cup e_X) \subseteq X
  \setminus Y $, we find that by Proposition~\ref{HochstaettlerLemma}$,
  (X', Y) $ is a modular pair 
in $M'$, too.  Let $ X'' =  {(X \cap Y) \cup l_X }$. By assumption $ \rk'(X') = \rk'(X'') + 1 $ and $X''
  \cap T$ is a line disjoint from and coplanar to $l_Y$.  Moreover $
  X'' \cap Y = {X \cap Y}$, thus $ X'' \cap Y $ is a flat in $ M' $.
  Furthermore submodularity implies $ \rk'(X' \cup Y) \leq \rk'(X''
  \cup Y) + 1 $.  Because $(X',Y)$ is a modular pair we
  obtain:
  \begin{eqnarray*}
 \rk'(X'' \cup Y) + 1 + \rk'(X'' \cap Y) &\geq& \rk'(X' \cup Y) + \rk'(X' \cap Y) \\&=& \rk'(X') + \rk'(Y) = \rk'(X'') + 1 + \rk'(Y)     
  \end{eqnarray*}
  and again submodularity of $ r' $ implies that equality must hold
  throughout.  Hence $ (X'',Y) $ is a modular
  pair and
\begin{equation*}
\rk'(X'' \cup Y) + 1 = \rk'(X' \cup Y) = \rk'(X' \cup Y \cup T) =  \rk'(X'' \cup Y \cup T)
\end{equation*}
implying $ T \nsubseteq \cl_{M'}(X'' \cup Y) $. The pair $ (X'',Y) $ now
contradicts Theorem~\ref{ModularePaareNichtMÃ¶glich}.
\end{proof}

\section{The Proper Amalgam}\label{sec:amalgam}

We prove Theorem~\ref{theo:OTErank4sticky} by constructing the
 {\em proper amalgam} of two given extensions of a rank-4 OTE matroid.
In this section we define this amalgam and we analyse some of its
properties.  Throughout, if not mentioned otherwise, we assume the
following situation.

{Let} $ M $ {be} a matroid with groundset $ T $ and rank function $ r
$ and $ M_1 $ and $ M_2 $ be extensions of $ M $ with
groundsets $ E_1 $ resp.\ $ E_2 $ and rank functions $ r_1 $ resp.\ $
r_2 $, where $ E_1 \cap E_2 = T $ and $ E_1 \cup E_2 = E $.  All
matroids are of finite rank with finite or countably infinite ground
set.  We define two functions $ \eta: \mathcal{P}(E)
\to \mathbb{Z} $ und $ \xi: \mathcal{P}(E) \to
\mathbb{Z} $ by
\[\eta(X) = \rk_1(X \cap E_1) + \rk_2(X \cap E_2) - \rk(X \cap T)\]
\[\text{ and }\xi(X) = \min\{ \eta(Y) \colon Y \supseteq X \}. \]

The following is immediate:

\begin{prop}\label{XiRangaxiome}
The function $ \xi $ is subcardinal, finite and monotone. That is, 
\begin{align*}
&\textbf{R1}: 0 \leq \xi(X) \leq |X| \text{, for all } X \subseteq E. \\
&\textbf{R1a}: \text{For all } X \subseteq E \text{ there exist an } X' \subseteq X, |X'| < \infty \text{, such that } \xi(X) = \xi(X'). \\
&\textbf{R2}: \text{For all } X_1 \subseteq X_2 \subseteq E \text{ we have } \xi(X_1) \leq \xi(X_2). 
\end{align*}
Moreover $ \xi(X) \leq \eta(X) $ for all $ X \subseteq E $.  
\end{prop}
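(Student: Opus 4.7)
The plan is to treat the four assertions in order, starting from the essentially trivial ones and ending with R1a, which is the main technical step. The bound $\xi(X)\le\eta(X)$ is immediate from the definition by taking $Y=X$ in the infimum.

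For R1, the upper bound $\xi(X)\le|X|$ reduces to showing $\eta(X)\le|X|$. I would use that $r_2$ extends $r$ together with subadditivity: $r_2(X\cap E_2)\le r_2(X\cap T)+r_2(X\cap(E_2\setminus T))=r(X\cap T)+r_2(X\cap(E_2\setminus T))$. Substituting into the definition of $\eta$ cancels the $r(X\cap T)$ term and leaves $r_1(X\cap E_1)+r_2(X\cap(E_2\setminus T))\le|X\cap E_1|+|X\cap(E_2\setminus T)|=|X|$, where the last equality uses that $E_1$ and $E_2\setminus T$ partition $E$. For the lower bound $\xi(X)\ge 0$, it suffices to establish $\eta(Y)\ge 0$ for every $Y$: since $r_1$ extends $r$, monotonicity gives $r_1(Y\cap E_1)\ge r_1(Y\cap T)=r(Y\cap T)$ and $r_2(Y\cap E_2)\ge 0$, so $\eta(Y)\ge 0$. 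Finiteness of $\xi$ follows because $\eta$ takes values in a bounded set of integers (bounded by the finite ranks of $M_1$ and $M_2$), so the infimum is a minimum. R2 is then immediate: shrinking $X$ enlarges the family of admissible $Y$ in the minimum, so $\xi$ can only decrease.

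The main step is R1a. Given $X\subseteq E$, I would pick finite subsets $B_0\subseteq X\cap T$, $B_1\subseteq X\cap E_1$, $B_2\subseteq X\cap E_2$ that are bases (in $M$, $M_1$, $M_2$ respectively) of these three sets; each exists and is finite since $M$, $M_1$, $M_2$ have finite rank. Set $X':=B_0\cup B_1\cup B_2\subseteq X$. Monotonicity (R2) gives $\xi(X')\le\xi(X)$. For the reverse inequality, let $Y\supseteq X'$. The choice of $B_1$ forces $X\cap E_1\subseteq \cl_{M_1}(B_1)\subseteq \cl_{M_1}(Y\cap E_1)$, so $r_1((Y\cup X)\cap E_1)=r_1(Y\cap E_1)$; the analogous identities hold for $r_2$ and $r$ via $B_2$ and $B_0$. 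Hence $\eta(Y\cup X)=\eta(Y)$, and since $Y\cup X\supseteq X$ we obtain $\eta(Y)=\eta(Y\cup X)\ge\xi(X)$. Taking the minimum over all $Y\supseteq X'$ yields $\xi(X')\ge\xi(X)$.

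The only real obstacle is R1a, and specifically the nontrivial inequality $\xi(X')\ge\xi(X)$. The trick is the three-layer basis construction: omitting $B_0$ would leave $r(Y\cap T)$ uncontrolled when passing from $Y$ to $Y\cup X$, spoiling the cancellation in $\eta$ on which the argument relies.
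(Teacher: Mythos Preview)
Your argument is correct. The paper itself gives no proof of this proposition at all: it is introduced with ``The following is immediate'' and left without justification. So there is nothing to compare against except to say that you have supplied the details the authors omitted. Your handling of R1 and R2 is routine and matches what any reader would reconstruct; your treatment of R1a via the three bases $B_0,B_1,B_2$ and the identity $\eta(Y\cup X)=\eta(Y)$ is the right idea and is carried out cleanly, including the observation that $B_0$ cannot be dropped.
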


If $ \xi $ is submodular on $ \mathcal{P}(E) $, then $ \xi $ is the
rank function of an amalgam of $ M_1 $ and $ M_2 $ along $ M $ (see
eg.\ \cite{Oxley}, Proposition 11.4.2).  This amalgam, if it exists,
is called the \textit{proper amalgam of $ M_1 $ and $ M_2 $ along $ M
  $}.

Now let $ \mathcal{L}(M_1,M_2) $ be the set of all subsets $ X $ of $
E $, so that $ X \cap E_1 $ and $ X \cap E_2 $ are flats in $ M_1 $
resp.\ $ M_2 $.  Then it is easy to see that $ \mathcal{L}(M_1,M_2) $
with the inclusion-ordering is a complete lattice of subsets of $ E $.
Let $ \land_{\mathcal{L}} $ and $ \lor_{\mathcal{L}} $ be the meet
resp.\ the join of this lattice. Clearly, for two sets $ X,Y \in
\mathcal{L}(M_1,M_2) $ we have $ X \land_{\mathcal{L}} Y = X \cap Y $
and $ X \lor_{\mathcal{L}} Y \supseteq X \cup Y $.  We need two
results from~\cite{Oxley}. 
\begin{lemma}[see~\cite{Oxley} Prop.\ 11.4.5.]   \label{XiCharakterisierung}
For all $ X \subseteq E $ 
\[ \xi(X) = \min\{ \eta(Y) \colon Y \in \mathcal{L}(M_1,M_2) \text{ and } Y \supseteq X \}.\]
\end{lemma}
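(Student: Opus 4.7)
The plan is to prove the two inequalities separately. The direction
\[\xi(X) \;\leq\; \min\{\eta(Y) : Y \in \mathcal{L}(M_1,M_2),\; Y \supseteq X\}\]
is immediate from the definition of $\xi$, since restricting the minimum to sets in $\mathcal{L}(M_1,M_2)$ can only increase its value. For the reverse inequality I will show that for every $Y \supseteq X$ there exists $Y^* \in \mathcal{L}(M_1,M_2)$ with $Y^* \supseteq Y$ and $\eta(Y^*) \leq \eta(Y)$.

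The core technical step is a one-sided closure lemma: for any $Y \subseteq E$, setting $A := Y \cap E_1$ and $B := Y \cap E_2$, the set $Y' := \cl_{M_1}(A) \cup B$ satisfies $\eta(Y') \leq \eta(Y)$. Since $Y \cap T = A \cap B \subseteq A \subseteq \cl_{M_1}(A)$ one verifies that $Y' \cap E_1 = \cl_{M_1}(A)$, $Y' \cap T = \cl_{M_1}(A) \cap T$, and $Y' \cap E_2 = B \cup (\cl_{M_1}(A) \cap T)$, and moreover that $B \cap (\cl_{M_1}(A) \cap T) = Y \cap T$. Applying submodularity of $\rk_2$ to $B$ and $\cl_{M_1}(A) \cap T$ in $E_2$, together with the fact that $\rk_2$ agrees with $\rk$ on subsets of $T$, yields
\[
\rk_2(Y' \cap E_2) - \rk_2(B) \;\leq\; \rk(\cl_{M_1}(A) \cap T) - \rk(Y \cap T).
\]
Substituting into the definition of $\eta$ gives $\eta(Y') \leq \eta(Y)$, and the analogous $M_2$-sided statement is symmetric.

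With the closure lemma in hand, I iterate: starting from $Y_0 := Y$, obtain $Y_{2k+1}$ from $Y_{2k}$ by an $M_1$-sided closure, and $Y_{2k+2}$ from $Y_{2k+1}$ by an $M_2$-sided closure. The chain $Y_0 \subseteq Y_1 \subseteq Y_2 \subseteq \cdots$ satisfies $\eta(Y_n) \leq \eta(Y)$ for all $n$, with $Y_{2k+1} \cap E_1$ an $M_1$-flat and $Y_{2k+2} \cap E_2$ an $M_2$-flat. Set $Y^* := \bigcup_n Y_n$. The subsequence of $M_1$-flats $(Y_{2k+1} \cap E_1)_k$ is an increasing chain with ranks bounded by $\rk_1(E_1) < \infty$, so it stabilizes; hence $Y^* \cap E_1$ is an $M_1$-flat, and symmetrically $Y^* \cap E_2$ is an $M_2$-flat, so $Y^* \in \mathcal{L}(M_1,M_2)$. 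Finally, for $n$ sufficiently large the three rank quantities appearing in $\eta(Y_n)$ equal their counterparts for $Y^*$, so $\eta(Y^*) = \eta(Y_n) \leq \eta(Y)$.

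The main obstacle is the bookkeeping inside the one-sided closure lemma, in particular identifying $B \cap (\cl_{M_1}(A) \cap T)$ as exactly $Y \cap T$ so that the submodular inequality telescopes cleanly with the $\rk(Y \cap T)$-term in $\eta$; the finite-rank hypothesis then makes the iteration converge to a set in $\mathcal{L}(M_1,M_2)$ without further complication.
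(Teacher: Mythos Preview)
Your proof is correct and follows essentially the same approach as the paper: both hinge on the one-sided closure inequality $\eta(\phi_i(X))\le\eta(X)$ for $\phi_1(X)=\cl_1(X\cap E_1)\cup(X\cap E_2)$ and its $M_2$-analogue, which is exactly your ``closure lemma.'' The only cosmetic difference is that the paper reaches an element of $\mathcal{L}(M_1,M_2)$ by choosing a \emph{maximal} $W$ with $Y\subseteq W\subseteq Z$ and $\eta(W)\le\eta(Y)$ (where $Z$ is the least lattice element above $Y$) and observing $\phi_i(W)=W$, whereas you iterate $\phi_1,\phi_2$ explicitly and use finite rank to force stabilization---two packagings of the same argument.
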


\begin{lemma}[see~\cite{Oxley} Lemma 11.4.6.]\label{kleinsteFlÃ¤che}
Let $ Y \subseteq E $ and $ Z $ be the smallest element of $ \mathcal{L}(M_1,M_2) $ containing $ Y $, then $ \eta(Z) \leq \eta(Y) $ holds.
\end{lemma}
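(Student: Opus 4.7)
My plan is to obtain $Z$ by iterating the closure operations of $M_1$ and $M_2$ and to track how $\eta$ behaves under one such closure step. Concretely, start from $Z_0 = Y$ and alternately set $Z_{k+1} = Z_k \cup \cl_{M_1}(Z_k \cap E_1)$ and $Z_{k+2} = Z_{k+1} \cup \cl_{M_2}(Z_{k+1} \cap E_2)$. Since $\rk_1(Z_k \cap E_1) + \rk_2(Z_k \cap E_2)$ is non-decreasing in $k$ and bounded by $\rk_1(E_1) + \rk_2(E_2) < \infty$, the sequence stabilizes after finitely many steps; once it does, $Z_k \cap E_1$ is a flat of $M_1$ and $Z_k \cap E_2$ is a flat of $M_2$, so $Z_k \in \mathcal{L}(M_1,M_2)$. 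Minimality is automatic because only forced elements are ever added, so the stabilized $Z_k$ coincides with the smallest element of $\mathcal{L}(M_1,M_2)$ containing $Y$.

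The argument then reduces to the one-element case: if $e \in \cl_{M_1}(Y \cap E_1) \setminus Y$, then $\eta(Y \cup \{e\}) \le \eta(Y)$ (the $M_2$-case being symmetric). If $e \in E_1 \setminus T$, none of $Y \cap T$, $Y \cap E_2$ changes and $\rk_1((Y \cap E_1) \cup \{e\}) = \rk_1(Y \cap E_1)$ by the choice of $e$, so $\eta$ is unchanged. If $e \in T$, then $(Y \cup \{e\}) \cap T = (Y \cap T) \cup \{e\}$ and $(Y \cup \{e\}) \cap E_2 = (Y \cap E_2) \cup \{e\}$, while $\rk_1$ still does not move. Using that $M = M_2 | T$, so that $\rk$ and $\rk_2$ agree on subsets of $T$, the desired inequality $\eta(Y \cup \{e\}) \le \eta(Y)$ collapses to
\[
\rk_2((Y \cap E_2) \cup \{e\}) - \rk_2(Y \cap E_2) \le \rk_2((Y \cap T) \cup \{e\}) - \rk_2(Y \cap T),
\]
which is precisely submodularity of $\rk_2$ applied to $A = Y \cap E_2$ and $B = (Y \cap T) \cup \{e\}$, using $Y \cap T \subseteq Y \cap E_2$ and $e \notin Y$ to see that $A \cap B = Y \cap T$ and $A \cup B = (Y \cap E_2) \cup \{e\}$.

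Combining the two ingredients gives $\eta(Z) \le \eta(Y)$ by induction on the number of elementary closure steps. I do not foresee a real obstacle; the main piece of bookkeeping is justifying that the alternating closure process actually terminates at the \emph{smallest} element of $\mathcal{L}(M_1,M_2)$ containing $Y$, and this follows once one observes that any element of $\mathcal{L}(M_1,M_2)$ containing $Y$ must contain every point added at every stage of the construction.
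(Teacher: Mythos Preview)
Your approach is essentially the paper's: both iterate the closure maps $\phi_i$ (your $Z_k \mapsto Z_{k+1}$ is exactly $\phi_1,\phi_2$ alternately) and show that $\eta$ does not increase under them via precisely the submodularity computation you spell out; the only difference is that the paper handles termination in the possibly infinite setting by choosing a \emph{maximal} $W$ with $Y\subseteq W\subseteq Z$ and $\eta(W)\le\eta(Y)$ and showing $\phi_i(W)=W$, whereas you bound the number of full closure iterations by the rank sum $\rk_1+\rk_2$. One cosmetic point: a single $\phi_i$-application may add infinitely many elements, so ``induction on the number of elementary closure steps'' should be read as induction over the finitely many $\phi_i$-applications, and your one-element submodularity argument with $A=Y\cap E_2$, $B=(Y\cap T)\cup\{e\}$ goes through verbatim with $\{e\}$ replaced by the whole set $\bigl(\cl_{M_1}(Y\cap E_1)\cap T\bigr)\setminus Y$.
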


The proof of Lemma 11.4.6 in \cite{Oxley} must be
  slightly modified in the end in order to make it work for matroids
  of finite rank but infinite groundset as well.
\begin{proof} 
  As in \cite{Oxley} for all $X \subseteq E$ we define
    $\phi_1(X) = \cl_1(X \cap E_1) \cup (X \cap E_2)$ and $\phi_2(X) =
    (X \cap E_1) \cup \cl_2 (X \cap E_2)$.  Following \cite{Oxley}
    we derive
\[\eta(\phi_i(X))\le \eta(X) \text{ for all }X \subseteq E\text{ and }i=1,2.\]

Now let $Z$ be the minimal element in $ \mathcal{L}(M_1,M_2) $ such that $Y \subseteq Z$ and choose 
$Y \subseteq W \subseteq  Z$  maximal with 
\[\eta(W)\le \eta(Y). \]
From $Y \subseteq W \subseteq \phi_i(W) \subseteq Z$ and $\eta(\phi_i(W)) \le \eta(W)$ follows $\phi_i(W) =W$ for $i=1,2$ and hence 
$W=Z \in \mathcal{L}(M_1,M_2)$ and Lemma \ref{kleinsteFlÃ¤che} follows, also implying Lemma \ref{XiCharakterisierung}.
\end{proof}

Note that the proof of this lemma and part (R1a) of Proposition \ref{XiRangaxiome} imply that Theorem~\ref{theo:modMatrSticky} holds  for infinite matroids of finite rank as well.
Now we generalize a result of Ingleton (cf.\ \cite{Oxley}, Theorem 11.4.7):

\begin{theorem}\label{EigentlichesAmalgamExistiert}
  Assume that for {any} pair $ (X,Y) $ of sets of $
  \mathcal{L}(M_1,M_2) $ the inequality defining
    submodularity is satisfied for at least one of $ \eta $ or $ \xi $.
  Then $ \xi $ is submodular on $ \mathcal{P}(E) $ and the proper
  amalgam of $ M_1 $ and $ M_2 $ along $ M $ exists.
\end{theorem}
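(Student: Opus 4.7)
The plan is to unwind the definitions and reduce submodularity of $\xi$ on arbitrary $X,Y \subseteq E$ to an inequality on a well-chosen pair in $\mathcal{L}(M_1,M_2)$, where the hypothesis directly applies. Fix $X, Y \subseteq E$. By Lemma~\ref{XiCharakterisierung} we may pick $X', Y' \in \mathcal{L}(M_1,M_2)$ with $X' \supseteq X$, $Y' \supseteq Y$ and
\[\eta(X') = \xi(X), \qquad \eta(Y') = \xi(Y).\]
Since $\xi(X') \le \eta(X') = \xi(X)$ by definition and $\xi(X) \le \xi(X')$ by monotonicity (R2), we also have $\xi(X') = \xi(X)$, and likewise $\xi(Y') = \xi(Y)$.

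Now invoke the hypothesis on the pair $(X', Y') \in \mathcal{L}(M_1,M_2) \times \mathcal{L}(M_1,M_2)$. I split into two cases according to which function satisfies the submodular inequality. In the case that $\xi$ does, I use monotonicity of $\xi$ applied to the inclusions $X \cup Y \subseteq X' \cup Y'$ and $X \cap Y \subseteq X' \cap Y'$ to obtain
\[\xi(X \cup Y) + \xi(X \cap Y) \;\le\; \xi(X' \cup Y') + \xi(X' \cap Y') \;\le\; \xi(X') + \xi(Y') \;=\; \xi(X) + \xi(Y),\]
which is the desired inequality. In the case that $\eta$ satisfies the inequality on $(X', Y')$, note first that $X' \cap Y' = X' \wedge_{\mathcal{L}} Y' \in \mathcal{L}(M_1,M_2)$ and contains $X \cap Y$, so $\xi(X \cap Y) \le \eta(X' \cap Y')$ by Lemma~\ref{XiCharakterisierung}. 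For the union, $X' \cup Y'$ may fail to lie in $\mathcal{L}(M_1,M_2)$, so I let $Z$ be the smallest element of $\mathcal{L}(M_1,M_2)$ containing $X' \cup Y'$; then $Z \supseteq X \cup Y$ gives $\xi(X \cup Y) \le \eta(Z)$, and Lemma~\ref{kleinsteFlÃ¤che} yields $\eta(Z) \le \eta(X' \cup Y')$. Combining these,
\[\xi(X \cup Y) + \xi(X \cap Y) \;\le\; \eta(X' \cup Y') + \eta(X' \cap Y') \;\le\; \eta(X') + \eta(Y') \;=\; \xi(X) + \xi(Y).\]

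Together with Proposition~\ref{XiRangaxiome} this shows that $\xi$ is a submodular, monotone, finite and subcardinal function on $\mathcal{P}(E)$; by the standard argument (cited in the text after Proposition~\ref{XiRangaxiome}), it is therefore the rank function of a matroid on $E$, namely the proper amalgam of $M_1$ and $M_2$ along $M$. The only slightly delicate point is the need, in the second case, to pass from the set-theoretic union $X' \cup Y'$ (to which the hypothesis refers via $\eta$) to an element of the lattice $\mathcal{L}(M_1,M_2)$, and this is precisely what Lemma~\ref{kleinsteFlÃ¤che} is designed to handle. Everything else is pure bookkeeping with monotonicity and the definition of $\xi$.
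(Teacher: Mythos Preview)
Your proof is correct and follows essentially the same route as the paper's: pick lattice elements $X',Y'$ realizing $\xi(X),\xi(Y)$, note $\xi=\eta$ on them, and split into the two cases of the hypothesis. The only cosmetic difference is that in the $\eta$-case the paper bounds $\xi(X\cup Y)$ by $\eta(Y_1\lor_{\mathcal{L}} Y_2)$ directly, whereas you reach the same bound via Lemma~\ref{kleinsteFlÃ¤che} applied to $X'\cup Y'$; since the smallest lattice element above $X'\cup Y'$ is $X'\lor_{\mathcal{L}} Y'$, these are the same step.
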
 

\begin{proof}
  Let $ X_1,X_2 \subseteq E $. By Lemma~\ref{XiCharakterisierung} we
  find $ Y_i \in \mathcal{L}(M_1,M_2) $ such that $ X_i \subseteq Y_i
  $ and $ \xi(X_i) = \eta(Y_i) $ for $ i = 1,2 $.  From $ \eta(Y_i) =
  \xi(X_i) \leq \xi(Y_i) \leq \eta(Y_i) $ we conclude that $ \xi(X_i)
  = \xi(Y_i) = \eta(Y_i) $. By assumption either $ \eta $ or $
  \xi $ or both are submodular on the pair of flats $ (Y_1,Y_2) $.
  Furthermore, $ X_1 \cap X_2 \subseteq Y_1 \cap Y_2 = Y_1
  \land_{\mathcal{L}} Y_2 $ and $ X_1 \cup X_2 \subseteq Y_1 \cup Y_2
  \subseteq Y_1 \lor_{\mathcal{L}} Y_2 $. Hence, by Proposition~\ref{XiRangaxiome} 
\[
\xi(X_1 \cap X_2) + \xi(X_1 \cup X_2) \leq \xi(Y_1
  \land_{\mathcal{L}} Y_2) + \xi(Y_1 \lor_{\mathcal{L}} Y_2).    
  \] Thus,
  if $\eta$ is submodular on $(Y_1,Y_2)$
  \begin{eqnarray*}
\xi(X_1 \cap X_2) + \xi(X_1 \cup X_2) &\leq &\eta(Y_1 \land_{\mathcal{L}} Y_2) +  \eta(Y_1 \lor_{\mathcal{L}} Y_2)\\& \leq &\eta(Y_1) + \eta(Y_2) = \xi(X_1) + \xi(X_2)    
  \end{eqnarray*}
and otherwise
\[\xi(X_1 \cap X_2) + \xi(X_1 \cup X_2)\le \xi(Y_1) + \xi(Y_2) = \xi(X_1) + \xi(X_2).\]
Hence $ \xi $ is submodular on $ \mathcal{P}(E) $ and the proper
amalgam exists.
\end{proof}

Lemma~\ref{kleinsteFlÃ¤che} immediately yields

\begin{lemma}\label{EtaVomJoinGrÃ¶sser}
If $ X,Y  $ are in $ \mathcal{L}(M_1,M_2) $, then $ \eta(X \cup Y) \geq \eta(X \lor_{\mathcal{L}} Y) $. 
Moreover we have $ \xi(X \cup Y) = \xi(X \lor_{\mathcal{L}} Y) $.
\end{lemma}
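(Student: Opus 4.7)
The plan is to derive both assertions directly from Lemma~\ref{kleinsteFlÃ¤che} and Lemma~\ref{XiCharakterisierung}, observing that the join $X \lor_{\mathcal{L}} Y$ is by definition the smallest element of $\mathcal{L}(M_1,M_2)$ containing $X \cup Y$ (since $X,Y$ themselves already lie in $\mathcal{L}(M_1,M_2)$, any element of $\mathcal{L}(M_1,M_2)$ containing $X$ and $Y$ is precisely one containing $X\cup Y$).

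For the first inequality, I would apply Lemma~\ref{kleinsteFlÃ¤che} with $Y$ replaced by $X\cup Y$. The smallest element of $\mathcal{L}(M_1,M_2)$ containing $X\cup Y$ is $X \lor_{\mathcal{L}} Y$, and the lemma then gives $\eta(X\lor_{\mathcal{L}}Y) \le \eta(X\cup Y)$, which is the claim.

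For the equality $\xi(X\cup Y) = \xi(X\lor_{\mathcal{L}}Y)$, monotonicity (Proposition~\ref{XiRangaxiome} R2) combined with $X\cup Y \subseteq X\lor_{\mathcal{L}}Y$ yields $\xi(X\cup Y) \le \xi(X\lor_{\mathcal{L}}Y)$. For the reverse inequality, I would apply Lemma~\ref{XiCharakterisierung} to both sides:
\[\xi(X\cup Y) = \min\{\eta(W) : W \in \mathcal{L}(M_1,M_2),\ W \supseteq X\cup Y\}\]
and
\[\xi(X\lor_{\mathcal{L}}Y) = \min\{\eta(W) : W \in \mathcal{L}(M_1,M_2),\ W \supseteq X\lor_{\mathcal{L}}Y\}.\]
Since any $W\in\mathcal{L}(M_1,M_2)$ with $W \supseteq X\cup Y$ contains both $X$ and $Y$, hence their lattice join $X\lor_{\mathcal{L}}Y$, the two minima are taken over the same family of sets, giving equality.

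There is essentially no obstacle here, as the statement is a straightforward bookkeeping corollary of the two results imported from \cite{Oxley}; the only point to keep in mind is the interpretation of $X \lor_{\mathcal{L}} Y$ as the smallest $\mathcal{L}(M_1,M_2)$-element containing $X\cup Y$, which makes Lemma~\ref{kleinsteFlÃ¤che} directly applicable.
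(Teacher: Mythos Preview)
Your proposal is correct and matches the paper's approach: the paper simply states that Lemma~\ref{kleinsteFlÃ¤che} ``immediately yields'' the result, and your argument is exactly the natural unpacking of that remark. Your use of Lemma~\ref{XiCharakterisierung} for the $\xi$-equality is a minor elaboration (one could also argue directly from Lemma~\ref{kleinsteFlÃ¤che} alone), but the route is the same.
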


We finish this section with a small lemma. 

\begin{lemma}\label{XiGleichEtaBeiFlÃ¤chen}
Additionally to the assumptions {from the second paragraph of this section} let $ M $ be of rank $4$.
Let $ X \in \mathcal{L}(M_1,M_2) $ with $ \rk(X \cap T) \geq 2 $. Then  $ \xi(X) = \eta(X) $.
\end{lemma}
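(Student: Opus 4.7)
The plan is to use Lemma~\ref{XiCharakterisierung}. Taking $Y=X$ in the minimum there (and using $X\in\mathcal{L}(M_1,M_2)$) gives $\xi(X)\le \eta(X)$ immediately, so the task reduces to proving $\eta(Y)\ge \eta(X)$ for every $Y\in\mathcal{L}(M_1,M_2)$ with $Y\supseteq X$.

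Set $A_i := X\cap E_i$, $B_i := Y\cap E_i$, so $A_i\subseteq B_i$ are flats of $M_i$. Write $\delta_i := \rk_i(B_i)-\rk_i(A_i)\ge 0$ and $\Delta := \rk(Y\cap T)-\rk(X\cap T)\ge 0$. Unwinding $\eta$, the target inequality becomes $\delta_1+\delta_2 \ge \Delta$. Since $\rk(M)=4$ and $\rk(X\cap T)\ge 2$, we have $\Delta\le 4-\rk(X\cap T)\le 2$, so it suffices to treat the three cases $\Delta\in\{0,1,2\}$.

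The key observation is that, because $A_i$ is a flat of $M_i$, $\delta_i=0$ forces $B_i=\cl_i(A_i)=A_i$. I would then record the following collapse: if $B_2=A_2$, then
\[ Y\cap T \;=\; B_1\cap B_2 \;=\; B_1\cap A_2 \;\subseteq\; E_1\cap E_2 \;=\; T,\]
so $Y\cap T = B_1\cap(A_2\cap T) = B_1\cap(X\cap T) = X\cap T$, using $X\cap T\subseteq A_1\subseteq B_1$ for the last step. The symmetric statement holds when $B_1=A_1$.

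Given these two observations, the case split is mechanical. $\Delta=0$ is immediate. If $\Delta=1$, then $Y\cap T\supsetneq X\cap T$ forces $Y\supsetneq X$, hence $B_i\supsetneq A_i$ for some $i$, and flatness of $A_i$ gives $\delta_i\ge 1$. If $\Delta=2$, neither $\delta_1$ nor $\delta_2$ can vanish, since the collapse above would then yield $Y\cap T=X\cap T$, contradicting $\Delta=2$; thus $\delta_1,\delta_2\ge 1$ and $\delta_1+\delta_2\ge 2=\Delta$. There is no genuine obstacle here: the argument is purely rank-theoretic and invokes neither OTE nor any of the preceding lemmas of this section. The only subtle point is spotting that flatness of $A_i$ promotes $\delta_i=0$ to the strong equality $B_i=A_i$, which, via the inclusion $B_1\cap B_2\subseteq T$, immediately pins down $Y\cap T$.
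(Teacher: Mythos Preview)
Your proof is correct and follows essentially the same rank-counting approach as the paper. The paper's version is a touch more streamlined: rather than invoking Lemma~\ref{XiCharakterisierung} and case-splitting on $\Delta$, it picks $t\in (Y\cap T)\setminus X$, notes $\eta(X\cup t)=\eta(X)+1$ (since $t$ lies outside each of the flats $X\cap E_1$, $X\cap E_2$, $X\cap T$), and then bounds any further drop of $\eta$ above $X\cup t$ by $4-\rk((X\cup t)\cap T)\le 1$; in particular your ``collapse'' argument for $\Delta=2$ is unnecessary, because any such $t$ already lies in both $B_1\setminus A_1$ and $B_2\setminus A_2$, giving $\delta_1,\delta_2\ge 1$ directly.
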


\begin{proof}
  Assume there exists $Y \supseteq X$ such that
  $\xi(X)=\eta(Y)<\eta(X)$. Then $\rk(Y \cap T)> \rk(X \cap T)$. Hence
  there exists an element $t \in (Y\cap T) \setminus X$, and because $ X
  \cap E_1, X \cap E_2 $ and $ X \cap T $ are flats we get
\begin{align*}
\eta(X \cup t) &= \rk_1((X \cup t) \cap E_1) + \rk_2((X \cup t) \cap E_2) - \rk((X \cup t) \cap T) \\
&= \rk_1(X \cap E_1) + 1 + \rk_2(X \cap E_2) + 1  - \rk(X \cap T) - 1 = \eta(X) + 1.
\end{align*}

But since $ M $ is of rank $4$ and $ \rk((X \cup t)
\cap T) \geq 3 $, the decrease of $ \eta $ for supersets of $ X \cup t $ is bounded by $1$ and thus $ \eta(Y) \geq \eta(X \cup
t) - 1 = \eta(X) $, a contradiction.
\end{proof}

\section{Proof of Theorem~\ref{theo:OTErank4sticky} }

Our proof of Theorem~\ref{theo:OTErank4sticky} may be considered as a
generalization of the proof of Proposition 11.4.9.\ in \cite{Oxley}.
Oxley refers to unpublished results of A.W.\ Ingleton.  We
start with a lemma.
\begin{lemma}\label{ZusammenfassungRang4HauptfilterEtaNichtSubmodular}  
  Let $ M $ be a rank-4 OTE matroid with ground set $ T $.  Let $ M_1
  $ and $ M_2 $ be two extensions of $ M $ with the ground sets $
  E_1,E_2 $ and rank functions $ r_1,r_2 $. Let $ E_1 \cap E_2 = T $
  and $ E_1 \cup E_2 = E $ and let $ \eta, \xi $ and $
  \mathcal{L}(M_1,M_2) $ be defined as in Section~\ref{sec:amalgam}.

  Let $ (X,Y) $ be a pair of elements of $ \mathcal{L}(M_1,M_2) $ that
  violates the submodularity of $ \eta $. Then
  \begin{enumerate}
  \item $\eta(X) + \eta(Y) - \eta(X \cap Y) - \eta(X \cup Y) \\= 
\delta(X \cap E_1, Y \cap E_1) + \delta(X \cap E_2, Y \cap E_2) - \delta(X \cap T, Y \cap T) = -1$.
\item $(X \cap E_i,Y \cap E_i)$ is a modular pair in $M_i$ for $i=1,2$.
\item $(X \cap T,Y \cap T)$ are two disjoint coplanar lines or a disjoint line-plane pair in $M$.
\item $\eta(X) = \xi(X)$  and  $\eta(Y) = \xi(Y)$. 
  \end{enumerate}
\end{lemma}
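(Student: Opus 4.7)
The plan is to rewrite the quantity $\eta(X) + \eta(Y) - \eta(X \cap Y) - \eta(X \cup Y)$ as a signed combination of modular defects in $M$, $M_1$, and $M_2$, and then to exploit the rigid constraints that rank-$4$ hypermodularity places on defects in $M$. Using the elementary identities $(X \cap Y) \cap E_i = (X \cap E_i) \cap (Y \cap E_i)$ and $(X \cup Y) \cap E_i = (X \cap E_i) \cup (Y \cap E_i)$ (and likewise with $T$ in place of $E_i$), I would unfold the definition of $\eta$ and group the nine resulting terms by the ambient matroid to obtain
\[\eta(X) + \eta(Y) - \eta(X \cap Y) - \eta(X \cup Y) = \delta_1 + \delta_2 - \delta,\]
where $\delta_i := \delta(X \cap E_i, Y \cap E_i)$ is computed in $M_i$ and $\delta := \delta(X \cap T, Y \cap T)$ in $M$. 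Because $X, Y \in \mathcal{L}(M_1, M_2)$, all of $X \cap E_i$, $Y \cap E_i$, $X \cap T$, $Y \cap T$ are flats in the respective matroids, so each of these three defects is a nonnegative integer.

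The core step is to show that $\delta \le 1$, with $\delta = 1$ forcing $(X \cap T, Y \cap T)$ to be either two disjoint coplanar lines or a disjoint line-plane pair. Since $M$ is OTE of rank $4$, it is hypermodular, so this reduces to a short case analysis on the pair of ranks $(\rk(X \cap T), \rk(Y \cap T))$. The trivial cases ($\rk \in \{0,4\}$, containment, or one flat a point) give $\delta = 0$ by direct computation; the rank pair $(3,3)$ gives $\delta = 0$ by hypermodularity; and the remaining cases (two lines, or a line and a plane) give $\delta = 1$ precisely in the two stated disjoint configurations.

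Combining the two steps, the failure of submodularity of $\eta$ at $(X,Y)$ reads $\delta_1 + \delta_2 - \delta \le -1$, and in conjunction with $\delta \le 1$ and $\delta_1, \delta_2 \ge 0$ this forces $\delta = 1$, $\delta_1 = \delta_2 = 0$, and the full expression equal to $-1$, simultaneously giving (i), (ii), and (iii). For (iv), assertion (iii) guarantees $\rk(X \cap T), \rk(Y \cap T) \ge 2$, so Lemma~\ref{XiGleichEtaBeiFlÃ¤chen} applies to both $X$ and $Y$ and yields $\xi(X) = \eta(X)$ and $\xi(Y) = \eta(Y)$. The only substantive obstacle I anticipate is the rank-$4$ hypermodular case analysis underlying the sharp bound $\delta \le 1$; once that is secured, the rest of the argument is algebraic bookkeeping and a direct appeal to Lemma~\ref{XiGleichEtaBeiFlÃ¤chen}.
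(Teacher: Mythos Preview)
Your proposal is correct and follows essentially the same route as the paper's proof: the paper calls the first equality in (i) a ``straightforward computation'' and attributes the second to hypermodularity bounding the modular defect in $M$ by $1$, derives (ii) and (iii) as immediate consequences, and obtains (iv) from Lemma~\ref{XiGleichEtaBeiFlÃ¤chen}. You have simply spelled out the computation and the rank case analysis that the paper leaves implicit.
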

\begin{proof}
  For part (i) a straightforward computation yields the first
  equality. The second one follows from the fact that OTE-matroids are
  hypermodular and that the modular defect in a
  hypermodular rank-$4$ matroid is bounded by $1$.
  Parts (ii) and (iii) are immediate from (i) and part (iv)
  follows from Lemma~\ref{XiGleichEtaBeiFlÃ¤chen}. \qedhere
\end{proof}

\begin{lemma}\label{xiSubmodular}
  Under the assumptions of
  Lemma~\ref{ZusammenfassungRang4HauptfilterEtaNichtSubmodular}, let $
  (X,Y) $ be a pair of elements of $ \mathcal{L}(M_1,M_2) $ such that
  the submodularity of $ \eta $ in $ \mathcal{L}(M_1,M_2) $ is
  violated, and either  $ \xi(X \cup Y) < \eta(X \cup Y) $ or $
  \xi(X \cap Y) < \eta(X \cap Y) $.  Then $ \xi $ is submodular for $
  (X,Y) $ in $ \mathcal{L}(M_1,M_2) $.
\end{lemma}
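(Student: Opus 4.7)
The plan is to combine the structural information packaged in Lemma~\ref{ZusammenfassungRang4HauptfilterEtaNichtSubmodular} with the hypothesis, and chain together a short sequence of (in)equalities. There is no real obstacle: once the previous lemma is available, this lemma is a bookkeeping step that unwinds the definitions.

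First I would observe that $\eta$ and $\xi$ take integer values on all subsets of $E$, since they are built from matroid rank functions via Proposition~\ref{XiRangaxiome}. Consequently, the hypothesis $\xi(X\cup Y) < \eta(X\cup Y)$ or $\xi(X\cap Y) < \eta(X\cap Y)$ upgrades to a gap of at least one: either $\xi(X\cup Y) \le \eta(X\cup Y) - 1$, or $\xi(X\cap Y) \le \eta(X\cap Y) - 1$. In the remaining case I would use the generic bound $\xi \le \eta$ from Proposition~\ref{XiRangaxiome}. Adding the two bounds yields
\[
\xi(X\cup Y) + \xi(X\cap Y) \;\le\; \eta(X\cup Y) + \eta(X\cap Y) - 1.
\]

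Next I would invoke Lemma~\ref{ZusammenfassungRang4HauptfilterEtaNichtSubmodular}(i), which asserts that the submodularity defect of $\eta$ on the pair $(X,Y)$ is exactly $-1$, i.e.\
\[
\eta(X\cup Y) + \eta(X\cap Y) - 1 \;=\; \eta(X) + \eta(Y).
\]
Finally, part (iv) of the same lemma gives $\eta(X)=\xi(X)$ and $\eta(Y)=\xi(Y)$, so
\[
\xi(X\cup Y) + \xi(X\cap Y) \;\le\; \xi(X) + \xi(Y),
\]
which is the submodularity inequality for $\xi$ at the pair $(X,Y)$.

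The hard work has already been done upstream: Lemma~\ref{ZusammenfassungRang4HauptfilterEtaNichtSubmodular} pinned down the precise size of the violation of $\eta$-submodularity (it is exactly one, using that rank-$4$ OTE matroids are hypermodular so individual modular defects are at most one) and simultaneously guaranteed $\xi=\eta$ at $X$ and at $Y$ via Lemma~\ref{XiGleichEtaBeiFlÃ¤chen}. Given these two facts, the hypothesis that $\xi$ strictly improves on $\eta$ at one of the two remaining corners of the modular diamond is precisely enough slack to absorb the single-unit failure of $\eta$, leaving $\xi$ submodular on this pair.
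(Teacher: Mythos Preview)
Your argument is correct and follows essentially the same route as the paper's proof: both use Lemma~\ref{ZusammenfassungRang4HauptfilterEtaNichtSubmodular}(i) to pin the $\eta$-defect at exactly $-1$, part~(iv) to replace $\eta$ by $\xi$ at $X$ and $Y$, and the hypothesis to gain the missing unit at $X\cup Y$ or $X\cap Y$. The only point the paper makes more explicit is that ``submodular in $\mathcal{L}(M_1,M_2)$'' refers to the lattice operations $\land_{\mathcal{L}},\lor_{\mathcal{L}}$; it invokes $X\land_{\mathcal{L}}Y = X\cap Y$ and Lemma~\ref{EtaVomJoinGrÃ¶sser} ($\xi(X\lor_{\mathcal{L}}Y)=\xi(X\cup Y)$) to identify this with the $\cup/\cap$ inequality you prove.
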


\begin{proof}
  Recall that $ \xi(X \cup Y) \leq \eta(X \cup Y) $ and $ \xi(X \cap
  Y) \leq \eta(X \cap Y) $ and $ \xi(X \cap Y) = \xi(X
  \land_{\mathcal{L}} Y) $ as well as $ \xi(X \cup Y) = \xi(X
  \lor_{\mathcal{L}} Y) $ by Lemma~\ref{EtaVomJoinGrÃ¶sser}. Moreover
  by Lemma~\ref{ZusammenfassungRang4HauptfilterEtaNichtSubmodular}
  (iv), $ \eta(X) = \xi(X) $ and $ \eta(Y) = \xi(Y) $. Altogether this
  implies
\begin{align*}
&\quad \xi(X) + \xi(Y) - \xi(X \land_{\mathcal{L}} Y) - \xi(X \lor_{\mathcal{L}} Y) \\
&= \xi(X) + \xi(Y) - \xi(X \cap Y) - \xi(X \cup Y) \\
&> \eta(X) + \eta(Y) - \eta(X \cap Y) - \eta(X \cup Y) = -1
\end{align*}
proving the assertion.
\end{proof}

We are now ready to tackle the proof of
Theorem~\ref{theo:OTErank4sticky} which is an immediate consequence of
the following:

\begin{theorem} \label{theo:prAmalExists}
Let $ M $ be a rank-4 OTE matroid.
Then for any pair of extensions of $ M $ the proper amalgam exists.
\end{theorem}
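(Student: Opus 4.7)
\ By Theorem~\ref{EigentlichesAmalgamExistiert} it suffices to show that for every pair $(X,Y)$ of elements of $\mathcal{L}(M_1,M_2)$ at least one of $\eta$ or $\xi$ satisfies the submodular inequality on $(X,Y)$. If $\eta$ does, there is nothing more to do. Otherwise Lemma~\ref{ZusammenfassungRang4HauptfilterEtaNichtSubmodular} pins down the situation: $(X\cap E_i, Y \cap E_i)$ is a modular pair in $M_i$ for $i = 1, 2$, while $(X \cap T, Y \cap T)$ is either (a) a pair of disjoint coplanar lines or (b) a disjoint line-plane pair in $M$. In either case I will produce, using Lemma~\ref{xiSubmodular}, a superset of $X \cup Y$ or of $X \cap Y$ on which $\eta$ is strictly smaller, so that $\xi(X \cup Y) < \eta(X \cup Y)$ or $\xi(X \cap Y) < \eta(X \cap Y)$, and hence $\xi$ is submodular on $(X,Y)$.

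In case (a), Corollary~\ref{Cor:ModularePaareNichtMÃ¶glich} applied in each $M_i$ to the modular pair $(X \cap E_i, Y \cap E_i)$ (whose restriction to $T$ is the non-modular pair $(X \cap T, Y \cap T)$) forces $T \subseteq \cl_{M_i}((X \cap E_i) \cup (Y \cap E_i))$ for $i=1,2$. Enlarging $X \cup Y$ to $X \cup Y \cup T$ therefore leaves the $M_i$-ranks unchanged but raises the $T$-rank from $3$ (the rank of two disjoint coplanar lines) to $4$, giving $\eta(X \cup Y \cup T) = \eta(X \cup Y) - 1$, which is the required witness.

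For case (b), write $e_X = X \cap T$ for the plane and $l_Y = Y \cap T$ for the line. Since $M$ is hypermodular, Proposition~\ref{pro:HypModMatrEbenenLinien} supplies a line $l \subseteq e_X$ coplanar with $l_Y$ in $M$. The contrapositive of Lemma~\ref{ReduktionFall2c}, applied to the modular pair $(X \cap E_i, Y \cap E_i)$ in $M_i$ with this $l$, yields the closure identity $e_X \subseteq \cl_{M_i}((X \cap Y \cap E_i) \cup l)$, equivalently $\rk_{M_i}((X \cap Y \cap E_i) \cup e_X) = \rk_{M_i}((X \cap Y \cap E_i) \cup l)$, for $i=1,2$. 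Combined with the modularity of $(X \cap E_i, Y \cap E_i)$ in each $M_i$, this should allow one to pick an appropriate superset of $X \cap Y$ (built by adjoining a suitable piece of $T$) whose $\eta$-value drops strictly below $\eta(X \cap Y)$, witnessing $\xi(X \cap Y) < \eta(X \cap Y)$.

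The main obstacle is case (b): adding $e_X$ to $X \cap Y$ bumps the $T$-rank by $3$, while the combined rank bumps in $M_1$ and $M_2$ can naively total as much as $4$. Getting the $M_i$-side increase strictly below $3$ in aggregate requires exploiting the closure identity from Lemma~\ref{ReduktionFall2c} in both extensions simultaneously, together with the modular-pair equations, and quite possibly a short case analysis on the rank of $X \cap Y \cap E_i$ in each $M_i$.
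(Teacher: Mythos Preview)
Your approach is exactly the paper's, and case~(a) is correct as stated. The gap is that case~(b) is left unfinished. The identity you extract from Lemma~\ref{ReduktionFall2c},
\[
r_i\bigl((X\cap Y\cap E_i)\cup e_X\bigr)=r_i\bigl((X\cap Y\cap E_i)\cup l\bigr),
\]
is only half of what is needed: you must further show that this common value equals $r_i(X\cap Y\cap E_i)+1$, i.e.\ that adjoining the whole line $l$ to the flat $X\cap Y\cap E_i$ raises the rank by only one, not two. Without this, the aggregate $M_i$-side increase could be as large as $4$, exactly the obstacle you flag.

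No case analysis is required to close this. Pick $p_1\in l$ and a second point $p_2\in l$; coplanarity of $l$ and $l_Y$ gives $p_2\in\cl_{M_i}\bigl((Y\cap E_i)\cup p_1\bigr)$, so $r_i\bigl((Y\cap E_i)\cup l\bigr)=r_i(Y\cap E_i)+1$. Now apply submodularity of $r_i$ to the pair $\bigl(X\cap E_i,\,(Y\cap E_i)\cup\{p_1,p_2\}\bigr)$: their intersection is $(X\cap Y\cap E_i)\cup\{p_1,p_2\}$ (since $p_1,p_2\in X$) and their union is $(X\cup Y)\cap E_i$. Comparing the submodular inequality with the modular-pair equation for $(X\cap E_i,Y\cap E_i)$ forces
\[
r_i\bigl((X\cap Y\cap E_i)\cup\{p_1,p_2\}\bigr)=r_i\bigl((X\cap Y\cap E_i)\cup p_1\bigr)=r_i(X\cap Y\cap E_i)+1.
\]
Chaining with the Lemma~\ref{ReduktionFall2c} identity yields $r_i\bigl((X\cap Y\cap E_i)\cup e_X\bigr)=r_i(X\cap Y\cap E_i)+1$ for both $i$. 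Hence the $M_i$-contributions to $\eta\bigl((X\cap Y)\cup e_X\bigr)$ rise by exactly $2$ in total while the $T$-contribution rises by $3$ (from $\rk(\emptyset)=0$ to $\rk(e_X)=3$), giving $\eta\bigl((X\cap Y)\cup e_X\bigr)=\eta(X\cap Y)-1$ directly. This is precisely the computation the paper carries out.
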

\begin{proof}
  Let $T$ denote the ground set of $ M $ and $ M_1,M_2 $ be two
  extensions of $ M $ with ground sets $ E_1,E_2 $ and rank functions $
  r_1,r_2 $, such that $ E_1 \cap E_2 = T $ and $ E_1 \cup E_2 = E $.
  We show that for these two extensions the proper amalgam exists.
  Let $ \eta $ and $ \xi $ be defined as in the previous section.
  By Lemma \ref{EigentlichesAmalgamExistiert} it suffices to show that
  for each pair $ (X,Y) $ of elements of $ \mathcal{L}(M_1,M_2) $
  either $ \eta $ or $ \xi $ is submodular.

By cases, we check all possible pairs $ (X,Y) $ of sets of $
  \mathcal{L}(M_1,M_2) $ where the submodularity of $ \eta $ could be
  violated, and show that 
  $ \xi(X \cup Y) < \eta(X \cup Y) $ or $ \xi(X \cap Y) < \eta(X \cap
  Y) $ and hence (by Lemma \ref{xiSubmodular}) $ \xi $ is submodular on
  $ (X,Y) $.

By Lemma \ref{ZusammenfassungRang4HauptfilterEtaNichtSubmodular}, $ (X
\cap E_i,Y \cap E_i) $ are modular pairs of flats in $ M_i $ for $i=1,2$
and $ (X \cap T, Y \cap T) $ is a pair of disjoint coplanar lines or a
disjoint line-plane-pair.

\textbf{Disjoint coplanar lines:} Assume $ X \cap T = l_X $ and $ Y \cap
  T = l_Y $ are two disjoint coplanar lines. By Corollary \ref{Cor:ModularePaareNichtMÃ¶glich}
  the fact that $ (X \cap
  E_i,Y \cap E_i) $ are modular pairs for $i=1,2$ implies that $ T
  \subseteq \cl_{M_i}((X \cup Y) \cap E_i) $ for $i=1,2$. Let $ t \in T
  \setminus \cl_M(l_X \cup l_Y) $. Then
\begin{eqnarray*}
  &&\eta( X \cup Y \cup  t ) \\
  &=&  r_1((X \cup Y \cup  t ) \cap E_1) + r_2((X \cup Y \cup  t ) \cap E_2)  - \rk((X \cup Y \cup  t ) \cap T) \\
  &=& r_1((X \cup Y) \cap E_1) + r_2((X \cup Y) \cap E_2) - \rk((X \cup Y) \cap T) - 1 \\
  &=& \eta(X \cup Y) - 1.
\end{eqnarray*}
Hence $ \xi(X \cup Y) < \eta(X \cup Y) $.

\textbf{Disjoint point-line pair:}
Assume $ X \cap T = e_X $ is a plane and $ Y \cap T = l_Y $ is
a line disjoint from $ e_X $. By Lemma~\ref{ReduktionFall2c}
for every  line $ l \subseteq e_X $ such that
$\rk(l \vee l_Y)=3$ we must have
\begin{equation}
\label{VorausFall2b} r_i((X \cap Y \cap E_i) \cup e_X) = r_i((X \cap Y \cap E_i) \cup l) \text{ for }i=1,2.  
\end{equation}
Choose a point $ p_1 \in e_X $. Since $M$ must be hypermodular
$l_X=(e_X \wedge (l_Y \vee p_1))$ is a line in $M$ and $p_1 \in l_X$.  Since
$ Y \cap E_1 $ is a flat in $M_1$ not containing $ p_1 $ and $X \cap Y \cap E_1$ is a flat in $M_1$ disjoint from $T$ we have 
\begin{eqnarray}\label{Fall2bRYPlus1}
r_1((Y \cup p_1) \cap E_1) &=& r_1(Y \cap E_1) + 1\\
\label{Fall2bRXCapYPlus1}
r_1((X \cap Y \cap E_1) \cup p_1) &=& r_1(X \cap Y \cap E_1) + 1.
\end{eqnarray}
Choose a second point $ p_2 \in l_X $ such that $ p_2 \neq p_1 $.
Since $ l_X $ and $ l_Y $ are coplanar, we obtain 
\[p_2 \in l_X \subseteq \cl_M(p_1 \cup l_Y) = \cl_M(p_1 \cup (Y \cap
T)) \subseteq \cl_{M_1}(p_1 \cup (Y \cap E_1)) \] and thus
\begin{equation} \label{Fall2bR1YcupL1E1}
r_1((Y \cup l_X) \cap E_1) = r_1((Y \cup \{p_1,p_2\}) \cap E_1) = r_1((Y \cup p_1) \cap E_1).
\end{equation}
Furthermore, since $ \{p_1,p_2\} \subseteq l_X \subseteq X $:
\begin{equation} \label{p1p2Gleichp1}
r_1((X \cup Y \cup \{p_1,p_2\}) \cap E_1) = r_1((X \cup Y) \cap E_1) 
\end{equation}
Using these equations and the modularity of $ (X \cap E_1,Y \cap E_1) $ in $ M_1 $ we compute
\begin{eqnarray*}
&&r_1(X \cap E_1) + r_1((Y \cup \{p_1,p_2\}) \cap E_1) \\
&\overset{(\ref{Fall2bR1YcupL1E1})}{=}&  r_1(X \cap E_1) + r_1((Y \cup p_1) \cap E_1) \\
&\overset{(\ref{Fall2bRYPlus1})}{=}&  r_1(X \cap E_1) + r_1(Y \cap E_1) + 1 \\
&\overset{(Mod.)}{=} &r_1((X \cup Y) \cap E_1) + r_1(X \cap Y \cap E_1) + 1 \\
&\overset{(\ref{p1p2Gleichp1})}{=} & r_1((X \cup Y \cup \{p_1,p_2\}) \cap E_1) + r_1(X \cap Y \cap E_1) + 1 \\
&\overset{(\ref{Fall2bRXCapYPlus1})}{=}&  r_1((X \cup Y \cup \{p_1,p_2\}) \cap E_1) + r_1((X \cap Y \cap E_1) \cup p_1) \\
&\leq& r_1((X \cup Y \cup \{p_1,p_2\}) \cap E_1) + r_1((X \cap Y \cap E_1) \cup \{p_1,p_2\}) 
\end{eqnarray*}
By submodularity of $ r_1 $ the last inequality must hold with equality and  hence 
\begin{align}\label{Fall2br1lGleichl1}
&\quad r_1((X \cap Y \cap E_1) \cup l_X) = r_1((X \cap Y \cap E_1) \cup p_1).
\end{align}
By symmetry (\ref{Fall2bRXCapYPlus1}) and (\ref{Fall2br1lGleichl1})
are also valid for $ r_2 $ and $E_2$.  Recalling that $ X \cap Y \cap T =
\emptyset $, we compute
\begin{eqnarray*}
\eta((X \cap Y) \cup e_X) 
&=& {\left[ \sum_{i=1}^2 r_i((X \cap Y \cap E_i) \cup e_X) \right]} - \rk(e_X) \\
&\overset{(\ref{VorausFall2b})}{=}& {\left[ \sum_{i=1}^2 r_i((X \cap Y \cap E_i) \cup l_X) \right]} - 3 \\
&\overset{(\ref{Fall2br1lGleichl1})}{=}& {\left[ \sum_{i=1}^2 r_i((X \cap Y \cap E_i) \cup p_1) \right]} - 3 \\ 
&\overset{(\ref{Fall2bRXCapYPlus1})}{=}& {\left[ \sum_{i=1}^2 \left(r_i(X \cap Y \cap E_i) + 1\right)\right]} - \rk(X \cap Y \cap T) - 3 \\
&=& \eta(X \cap Y) - 1. 
\end{eqnarray*}
Hence $\xi(X \cap Y) < \eta(X \cap Y). $
\end{proof}

\section{Conclusion}

Now if we put the embedding theorems together with Theorem~\ref{theo:OTErank4sticky},
we get the equivalence of three conjectures:

\begin{theorem}
The following statements are equivalent:
 \begin{enumerate}
  \item All finite sticky matroids are modular. (SMC)
  \item Every finite hypermodular matroid is embeddable in a modular matroid. (Kantor's Conjecture)
 \item Every finite OTE matroid is modular.
 \end{enumerate}
\end{theorem}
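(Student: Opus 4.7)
My plan is to close a three-way cycle of equivalences. The equivalence of (i) and (ii) has already been stated as the Corollary in the introduction, proved via Theorem \ref{theo:extbonin} together with the embedding theorems of Section \ref{sec:embed}, so it suffices to establish the equivalence of (i) with (iii).

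For (iii) $\Rightarrow$ (i) the argument is immediate. Let $M$ be a finite sticky matroid. Theorem \ref{theo:nine} states that a matroid that is not OTE cannot be sticky; hence $M$ is OTE, and assumption (iii) then forces $M$ to be modular, which is exactly the conclusion of the SMC.

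For (i) $\Rightarrow$ (iii) I would route the argument through the rank-4 case. Both sides reduce to rank $4$: the SMC by Theorem \ref{theo:knownForSMC}(ii), and statement (iii), being the finite form of Conjecture \ref{conj:1}, by a contraction argument parallel to Corollary \ref{corollar:KantorRang4}; concretely, a non-hypermodular matroid has a non-modular pair of hyperplanes, which is always intersectable because the modular cut it generates contains only itself and the ground set, while a hypermodular matroid of rank $n>4$ that is non-modular has a rank-$(n-4)$ flat whose contraction is a hypermodular non-modular rank-4 matroid, and an intersectable pair from that contraction lifts back to $M$ via the standard bijection between flats of $M$ containing the contracted flat and flats of the contraction. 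Granting these reductions, it suffices to handle rank $4$. Let $M$ be a finite rank-$4$ OTE matroid; Theorem \ref{theo:OTErank4sticky} yields that $M$ is sticky, and (i) then forces $M$ to be modular. Thus (iii) holds in rank $4$ and, by reducibility, in full generality.

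The principal technical obstacle is the rank-$4$ reduction of (iii), which the paper asserts only in passing. I would carry it out in the manner sketched above: first dispose of the non-hypermodular case by invoking the hyperplane-pair observation used in the proof of Theorem \ref{theorem:BoninGeneral}; then show that a hypermodular but non-modular matroid admits a rank-$(n-4)$ flat whose contraction is still non-modular, so that the rank-$4$ version of (iii) applies; and finally verify that lifting an intersectable non-modular pair across the contraction via the flat-correspondence preserves both non-modularity of the pair and the property that a single-element extension strictly decreases its modular defect, the latter being routine because contraction commutes with single-element extension along a modular cut avoiding the contracted flat.
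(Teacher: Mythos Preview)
Your cycle is logically sound, and the implication $(iii)\Rightarrow(i)$ via Theorem~\ref{theo:nine} matches the paper exactly. The difference lies in $(i)\Rightarrow(iii)$.

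The paper does \emph{not} pass through the rank-4 reduction of (iii) here. Instead it closes the cycle as $(i)\Rightarrow(ii)\Rightarrow(iii)\Rightarrow(i)$, and the step $(ii)\Rightarrow(iii)$ is a two-line argument: an OTE matroid is hypermodular, so by (ii) it embeds in a modular matroid $M'$; but then any non-modular pair $(X,Y)$ in $M$ would admit a point $p\in\cl_{M'}(X)\cap\cl_{M'}(Y)\setminus\cl_{M'}(X\cap Y)$, and the single-element extension $M+p$ corresponds to a modular cut containing $X$ and $Y$ but not $X\cap Y$, hence non-principal --- contradicting OTE. No rank reduction is needed.

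Your route instead requires the rank-4 reducibility of (iii), which the paper only asserts in passing and never proves. Your sketch of this reduction has an incomplete step: you claim that a hypermodular non-modular matroid of rank $n>4$ always has a rank-$(n-4)$ flat whose contraction remains non-modular. The lifting of an intersectable pair back to $M$ works as you describe, but the \emph{existence} of such a flat is not established. Contracting a point $p$ in $X\cap Y$ preserves the defect, so the issue is only when every non-modular pair has $\rk(X\cap Y)=0$; ruling this out is doable but not automatic, and in particular your phrase ``parallel to Corollary~\ref{corollar:KantorRang4}'' is misleading, since that corollary is proved via Kantor's strong-embedding theorem rather than by contraction. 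So your approach is correct in outline but leaves real work undone precisely where the paper's $(ii)\Rightarrow(iii)$ bypasses it entirely.
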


\begin{proof}
  $(i) \Rightarrow (ii)$ These two statements can be reduced to the
  rank-4 case (see Theorem \ref{theo:knownForSMC} and Corollary
  \ref{corollar:KantorRang4}).  Now consider a finite hypermodular
  rank-4 matroid $M$. Because of Theorem
  \ref{satz:ErsterEinbettungssatz}, it can be embedded into a finite
  rank-4 OTE matroid $M'$ that is sticky due to Theorem
  \ref{theo:OTErank4sticky}. If $(i)$ holds then $M'$ is modular and
  $M$ can be embedded into a modular matroid and $(ii)$ holds.

  $(ii) \Rightarrow (iii)$ Let $M$ be a finite OTE matroid. It is also
  hypermodular. If $(ii)$ holds, it is embeddable into a modular
  matroid. Since $M$ is OTE, it must itself already be modular.

$(iii) \Rightarrow (i)$ Let $M$ be a finite sticky matroid. Because of Theorem \ref{theo:extbonin} it must be an OTE matroid and, if $(iii)$ holds,
must be modular and $(i)$ holds. 
\end{proof}

A slightly weaker conjecture than the (SMC) in the finite case, which
could also hold in the infinite case, is the generalization of
Theorem~\ref{theo:OTErank4sticky} to arbitrary
rank.
\begin{conjecture}
  A matroid is sticky if and only if it is an OTE matroid.
\end{conjecture}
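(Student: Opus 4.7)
The plan is to decompose the biconditional into its two implications. One direction, sticky implies OTE, is already settled in full generality by Theorem~\ref{theo:nine}, with no restriction on rank. The substantive work therefore lies in proving the converse, OTE implies sticky, for matroids of arbitrary rank; this is exactly the generalization of Theorem~\ref{theo:OTErank4sticky} that the authors explicitly flag in the paragraph preceding the conjecture.

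My first attempt would be to imitate the proper amalgam construction of Section~\ref{sec:amalgam}. The definitions of $\eta$, $\xi$, and the lattice $\mathcal{L}(M_1,M_2)$ are not rank-specific, and Theorem~\ref{EigentlichesAmalgamExistiert} continues to reduce existence of the proper amalgam to checking, for each pair $(X,Y) \in \mathcal{L}(M_1,M_2)$ violating submodularity of $\eta$, that submodularity of $\xi$ is recovered via a strict drop $\xi(X \cup Y) < \eta(X \cup Y)$ or $\xi(X \cap Y) < \eta(X \cap Y)$. The analogue of Lemma~\ref{ZusammenfassungRang4HauptfilterEtaNichtSubmodular} would need to classify the trace-pairs $(X \cap T, Y \cap T)$ at which $\eta$ can fail to be submodular in an OTE matroid of arbitrary rank; in higher rank the list of candidate configurations grows beyond disjoint coplanar lines and disjoint line-plane pairs, and each new configuration would require a dedicated argument to produce the required drop of $\xi$.

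In parallel I would attempt induction on rank. The natural move is to show that a well-chosen contraction of an OTE matroid by a point is again OTE; then an OTE matroid $M$ of rank $n$ would contract to an OTE matroid of rank $n-1$ that is sticky by the inductive hypothesis, and one would try to lift amalgams of extensions of $M$ from amalgams in the contraction. Whether the OTE property actually descends to contractions is, however, not established in the excerpt and is itself delicate, since new non-principal modular cuts can arise when one contracts.

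The hard part will be the loss of the rank-$4$ geometric toolkit developed in Section~\ref{sec:extbonin}. Lemma~\ref{lemma:1} and Theorem~\ref{theorem:2} produce a line-partition of the ground set induced by two disjoint coplanar lines, and that partition underpins the non-existence results for modular pairs in Section~5 as well as the coplanar-lines case in the proof of Theorem~\ref{theo:prAmalExists}. In rank greater than $4$, hypermodularity of $M$ only forces modularity of contractions by rank-$(n-3)$ flats, and one cannot in general exhibit two lines that both span $M$ and are each coplanar with a prescribed disjoint pair. Finding a substitute structural statement, plausibly a Kantor-style decomposition of OTE matroids into their modular components, is where I expect genuinely new ideas to be required.
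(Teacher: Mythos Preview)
The statement you are attempting to prove is a \emph{conjecture}, not a theorem: the paper does not contain a proof, and explicitly presents it as an open problem. In the paragraph immediately following the conjecture the authors write that their proof of Theorem~\ref{theo:OTErank4sticky} ``frequently uses the fact that we are dealing with rank~$4$ matroids,'' that the case checking ``seems to become tedious even for ranks only slightly larger than~$4$,'' and that a generalization of Theorem~\ref{theorem:2}(iii) would be needed to generalize Lemma~\ref{LinieErsetzRangGleich}. There is therefore nothing to compare your attempt to.

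That said, your analysis of the situation is accurate and aligns closely with the authors' own assessment. You correctly observe that the direction sticky $\Rightarrow$ OTE is settled in all ranks by Theorem~\ref{theo:nine}, and that the converse is the open part. Your identification of the obstacles---the growth of the list of trace-pair configurations beyond the two handled in Theorem~\ref{theo:prAmalExists}, and the loss of the line-partition machinery of Lemma~\ref{lemma:1} and Theorem~\ref{theorem:2} in higher rank---matches precisely what the authors flag. Your proposal is thus not a proof but a reasonable research outline for an open problem; just be aware that what you have written does not resolve the conjecture, and the paper does not claim to either.
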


Our proof of Theorem \ref{theo:OTErank4sticky} frequently uses the
fact that we are dealing with rank $4$ matroids.  We think there is a
way to avoid Lemma \ref{XiGleichEtaBeiFlÃ¤chen}, but the case checking
in the proof of of Theorem \ref{theo:prAmalExists} seems to become
tedious even for ranks only slightly larger than $4$.  Moreover, we
need a generalization of Theorem \ref{theorem:2} (iii) in order to
generalize Lemma~\ref{LinieErsetzRangGleich}.

\section*{Acknowledgement}
The authors are greatful to an anonymous referee who carefully read the paper, and whose comments helped to  improve its readability.

\bibliographystyle{plain} \bibliography{sticky}

\begin{thebibliography}{1}

\bibitem{BachemKernSticky}
Achim Bachem and Walter Kern.
\newblock On sticky matroids.
\newblock {\em Discrete Math.}, 69(1):11--18, 1988.

\bibitem{BoninSticky}
Joseph~E. Bonin.
\newblock A note on the sticky matroid conjecture.
\newblock {\em Ann. Comb.}, 15(4):619--624, 2011.

\bibitem{Crapo}
Henry~H. Crapo.
\newblock Single-element extensions of matroids.
\newblock {\em J. Res. Nat. Bur. Standards Sect. B}, 69B:55--65, 1965.

\bibitem{Kahn1980}
Jeff Kahn.
\newblock Locally projective-planar lattices which satisfy the bundle theorem.
\newblock {\em Mathematische Zeitschrift}, 175(3):219--247, 1980.

\bibitem{Kantor}
William~M. Kantor.
\newblock Dimension and embedding theorems for geometric lattices.
\newblock {\em J. Combinatorial Theory Ser. A}, 17:173--195, 1974.

\bibitem{METSCH1989161}
Klaus Metsch.
\newblock Embedding finite planar spaces into 3-dimensional projective spaces.
\newblock {\em Journal of Combinatorial Theory, Series A}, 51(2):161 -- 168,
  1989.

\bibitem{Oxley}
James Oxley.
\newblock {\em Matroid theory}, volume~21 of {\em Oxford Graduate Texts in
  Mathematics}.
\newblock Oxford University Press, Oxford, second edition, 2011.

\bibitem{PoljakTurzik}
Svatopluk Poljak and Daniel Turz\'ik.
\newblock A note on sticky matroids.
\newblock {\em Discrete Math.}, 42(1):119--123, 1982.

\bibitem{Wille67}
R.~Wille.
\newblock On incidence geometries of grade $n$.
\newblock Technical report, TU Darmstadt, 1967.

\end{thebibliography}

\end{document}